 \newtheorem{thm}{Theorem}[section]
 \newtheorem*{thmA}{Theorem A}
 \newtheorem*{thmB}{Theorem B}
 \newtheorem*{thm*}{Theorem}
 \newtheorem{cor}[thm]{Corollary}
 \newtheorem{lem}[thm]{Lemma}
 \newtheorem{prop}[thm]{Proposition}
 \theoremstyle{definition}
 \newtheorem{defn}[thm]{Definition}
 \theoremstyle{remark}
 \newtheorem{rem}[thm]{Remark}
 \newtheorem*{exs}{Examples}
 \numberwithin{equation}{section}
\newcommand{\vertiii}[1]{{\left\vert\kern-0.25ex\left\vert\kern-0.25ex\left\vert #1
  \right\vert\kern-0.25ex\right\vert\kern-0.25ex\right\vert}}
\newcommand{\tr}{\operatorname{Tr}}
\newcommand{\Span}{\operatorname{span}}
\newcommand{\im}{\operatorname{Im}}
\newcommand{\closedspan}{\overline{\operatorname{span}}}
\newcommand{\buc}{\operatorname{BUC}}
\newcommand{\BUC}{\operatorname{BUC}}
\begin{document}
\title{Correspondence theory on $p$-Fock spaces with applications to Toeplitz algebras}
\author{Robert Fulsche}

\maketitle
\begin{abstract}
We prove several results concerning the theory of Toeplitz algebras over $p$-Fock spaces using a correspondence theory of translation invariant symbol and operator spaces. The most notable results are: The full Toeplitz algebra is the norm closure of all Toeplitz operators with bounded uniformly continuous symbols. This generalizes a result obtained by J. Xia \cite{Xia} in the case $p = 2$, which was proven by different methods. Further, we prove that every Toeplitz algebra which has a translation invariant $C^\ast$ subalgebra of the bounded uniformly continuous functions as its set of symbols is linearly generated by Toeplitz operators with the same space of symbols.

\medskip
\textbf{AMS subject classification:} Primary: 47L80; Secondary: 47B35, 30H20

\medskip
\textbf{Keywords:} Toeplitz algebras, Fock spaces
\end{abstract}

\section{Introduction}
In recent years, the study of Toeplitz algebras over Bergman or Segal-Bargmann-Fock spaces has experienced significant interest. Usually, one tries to understand the structure of an algebra, say a $C^\ast$ algebra, generated by Toeplitz operators with symbols from a subset of $L^\infty$ having some common property, which gives access to a deeper study. We mention \cite{Bauer_Hagger_Vasilevski2019, Esmeral_Vasilevski2016, Vasilevski_2008} and also refer to references therein. A joint approach in all those works was that they only deal with subalgebras of the full Toeplitz algebra. By the full Toeplitz algebra we mean the Banach algebra generated by all Toeplitz operators with $L^\infty$ symbols. In contrast to that, J. Xia in his paper \cite{Xia} proved a remarkable result about the full Toeplitz algebra. Let us fix some notation before going into details.

By $F_t^p$ we denote the $p$-Fock space, i.e. the space of holomorphic functions on $\mathbb C^n$ which are $p$-integrable with respect to a certain Gaussian measure with a parameter $t > 0$. We will denote the full Toeplitz algebra on $F_t^p$ by $\mathcal T^{p,t}$. By $\mathcal T^{p,t}(S) \subseteq \mathcal L(F_t^p)$ we mean the Banach algebra generated by Toeplitz operators with symbols in $S \subseteq L^\infty(\mathbb C^n)$, by $\mathcal T_{lin}^{p,t}(S) \subseteq \mathcal L(F_t^p)$ the closed linear space generated by such Toeplitz operators and by $\mathcal T_{\ast}^{2,t}(S) \subseteq \mathcal L(F_t^2)$ the $C^\ast$ algebra generated by these operators. The result by J. Xia is now the following:
\begin{thm*}[\cite{Xia}]
The following holds true:
\[ \mathcal T^{2,1} = \mathcal T_{lin}^{2,1}(L^\infty(\mathbb C^n)). \]
\end{thm*}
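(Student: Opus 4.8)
The inclusion $\mathcal T_{lin}^{2,1}(L^\infty(\mathbb C^n)) \subseteq \mathcal T^{2,1}$ is immediate from the definitions; the content is the reverse one, i.e.\ every finite product $T_{f_1}\cdots T_{f_m}$ of Toeplitz operators must be shown to lie in the norm closure of the linear span of single Toeplitz operators. My plan is to run everything through the covariance of $F^2_1$ under the Heisenberg group. Write $W_z$, $z\in\mathbb C^n$, for the Weyl translation unitaries, set $\alpha_z(A)=W_{-z}AW_z$, and recall the covariance relation $\alpha_z(T_f)=T_{\tau_z f}$ (with $\tau_z$ the corresponding translation of symbols), which shows that $\mathcal T^{2,1}$ and every $\mathcal T_{lin}^{2,1}(D)$ with $D$ translation invariant are invariant under all $\alpha_z$. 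Let $\widetilde A(w)=\langle Ak_w,k_w\rangle$ be the Berezin transform ($k_w$ the normalized reproducing kernel); then $A\mapsto\widetilde A$ is a contraction from $\mathcal L(F^2_1)$ into $L^\infty(\mathbb C^n)$, its range consists of real-analytic, hence bounded uniformly continuous, functions, and it intertwines $\alpha_z$ with $\tau_z$.

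The main structural input is the correspondence theory that is the subject of this paper. In the form I want to use it, it asserts that $D\mapsto\mathcal T_{lin}^{2,1}(D)$ and $\mathcal D\mapsto\overline{\{\widetilde A:A\in\mathcal D\}}$ are mutually inverse bijections between the translation-invariant closed subspaces $D$ of $\operatorname{BUC}(\mathbb C^n)$ and the translation-invariant closed subspaces $\mathcal D$ of $\mathcal L(F^2_1)$ that are $\alpha$-regular, meaning that $z\mapsto\alpha_z(A)$ is norm continuous for every $A\in\mathcal D$; in particular, for such $\mathcal D$ one has $\mathcal D=\mathcal T_{lin}^{2,1}\big(\overline{\{\widetilde A:A\in\mathcal D\}}\big)$. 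Granting this, the proof reduces to one point: $\mathcal T^{2,1}$ is $\alpha$-regular. Indeed, once this is known, $\mathcal T^{2,1}$ is a translation-invariant closed $\alpha$-regular operator space, so $\mathcal T^{2,1}=\mathcal T_{lin}^{2,1}\big(\overline{\{\widetilde A:A\in\mathcal T^{2,1}\}}\big)\subseteq\mathcal T_{lin}^{2,1}(L^\infty(\mathbb C^n))$, and with the trivial inclusion we are done. (In fact the same argument yields the sharper $\mathcal T^{2,1}=\mathcal T_{lin}^{2,1}(\operatorname{BUC})$, and Xia's theorem falls out of $\mathcal T_{lin}^{2,1}(\operatorname{BUC})\subseteq\mathcal T_{lin}^{2,1}(L^\infty)\subseteq\mathcal T^{2,1}=\mathcal T_{lin}^{2,1}(\operatorname{BUC})$.) Since the $\alpha$-regular operators form a norm-closed, $\alpha$-invariant subalgebra of $\mathcal L(F^2_1)$ and $\mathcal T^{2,1}$ is generated as a Banach algebra by the $T_f$, $f\in L^\infty$, it suffices to show each such $T_f$ is $\alpha$-regular, i.e.
\[
\|T_{\tau_z f}-T_f\|=\|T_{\tau_z f-f}\|\longrightarrow 0\qquad(z\to 0).
\]

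This estimate is the main obstacle, and it is genuinely a Fock-space phenomenon: for merely bounded $f$ the symbol $\tau_z f-f$ need not be small in $L^\infty$, so the trivial bound $\|T_{\tau_z f-f}\|\le\|\tau_z f-f\|_\infty$ is worthless, and the difficulty is precisely that products of Toeplitz operators and Toeplitz operators with rough symbols must all be tamed at once. I would test against unit vectors and use that $\langle T_{\tau_z f-f}h,h\rangle=\int_{\mathbb C^n} f(u)\big(q_h(u)-q_h(u+z)\big)\,du$, where $q_h=|h|^2\gamma$ is the ``Husimi-type'' probability density of $h$ (with $\gamma$ the Gaussian weight), so that $\|T_{\tau_z f-f}\|\le\|f\|_\infty\,\sup_{\|h\|=1}\|q_h-q_h(\cdot+z)\|_{L^1}$; one then shows that the densities of unit vectors admit a uniform $L^1$ modulus of continuity under translation, which is where the uncertainty principle for $F^2_1$ enters — coherent states at mutual distance $\gtrsim 1$ are nearly orthogonal, so a Husimi density cannot both be a probability density and carry structure below unit scale. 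Equivalently, one can smooth the symbol, noting $\int_{\mathbb C^n}\varphi(z)T_{\tau_z f}\,dz=T_{f*\check\varphi}$ with $f*\check\varphi\in\operatorname{BUC}$ for $\varphi\in L^1$, and must show these converge to $T_f$ in norm along an approximate identity. A fallback route, closer to Xia's original argument and avoiding the correspondence machinery, is to introduce the class of ``sufficiently localized'' operators (quantitative off-diagonal decay of $|\langle Ak_z,k_w\rangle|$ in $|z-w|$), verify it is a Banach algebra containing all $T_f$ and hence all of $\mathcal T^{2,1}$, and then approximate any sufficiently localized operator in norm by Riemann sums of Toeplitz operators; the technical heart there is the same off-diagonal/oscillation estimate in another guise.
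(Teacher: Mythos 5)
Your overall strategy is the paper's: show that $\mathcal T^{2,1}$ is contained in the set $\mathcal C_1$ of operators on which translation acts norm-continuously, reduce (using that $\mathcal C_1$ is a closed subalgebra) to $T_f\in\mathcal C_1$ for each $f\in L^\infty$, and then use the correspondence $\mathcal D_0\leftrightarrow\mathcal T_{lin}(\mathcal D_0)$ to identify $\mathcal C_1$ with $\mathcal T_{lin}(\BUC)$. That part is right, and it is exactly how Theorem \ref{Toeplitz_algebra_linearly_generated} is proved via Proposition \ref{proposition_characterizations_C_1}.

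The gap is at the step you yourself flag as the main obstacle, $\|T_{\tau_z f}-T_f\|\to 0$, and the route you sketch there does not close. Testing against $h$ and dualizing reduces the claim to a \emph{uniform} $L^1$ modulus of continuity for the Husimi densities $q_h$ over all unit vectors $h$; but that uniform statement is, by the very duality you invoke, equivalent to $\sup_{\|g\|_\infty\le1}\|T_{\tau_z g}-T_g\|\to 0$, which is the estimate you started from, so nothing has been gained without an independent argument for it. The concrete Cauchy--Schwarz attempt (writing $|a|^2-|b|^2=(a-b)(\bar a+\bar b)$ and estimating $\int|\langle h,k_{u+z}-k_u\rangle|^2\,du$) fails: that integral equals $2(\pi t)^n\bigl(1-e^{-|z|^2/(8t)}\operatorname{Re}\langle W_{z/2}h,h\rangle\bigr)$, and since $W_{z/2}$ has the whole unit circle as spectrum for $z\neq0$, the supremum over unit $h$ stays bounded away from $0$ as $z\to0$. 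The ``uncertainty principle'' heuristic points at a true phenomenon, but it is not a proof.

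What makes the step trivial in the paper is the identity $T_f^t=\mathcal R_t\ast f$, where $\mathcal R_t$ is the fixed rank-one operator $\frac{1}{(\pi t)^n}\,1\otimes 1$ and $\ast$ is Werner's convolution $\mathcal N\times L^\infty\to\mathcal L$ with the bound $\|A\ast g\|_{op}\le C\|A\|_{\mathcal N}\|g\|_{L^\infty}$. With this,
\[
\alpha_z T_f^t - T_f^t = (\alpha_z\mathcal R_t - \mathcal R_t)\ast f, \qquad \|\alpha_z T_f^t - T_f^t\|_{op} \le C\,\|\alpha_z\mathcal R_t-\mathcal R_t\|_{\mathcal N}\,\|f\|_{L^\infty},
\]
and $\|\alpha_z\mathcal R_t-\mathcal R_t\|_{\mathcal N}\to0$ is immediate for a rank-one operator. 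In other words, the trick is to move the translation onto the trace-class factor, where norm-continuity is free, instead of onto the symbol or onto the test vectors; that one-line shift is what your Husimi computation is laboring to reproduce without the formalism. Your fallback via ``sufficiently localized'' operators is closer to Xia's original method than to this paper's and would also require substantial work to complete.
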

We want to stress that Xia also proved an analogous result for the Toeplitz algebra on the Bergman space over the unit ball in $\mathbb C^n$ in his paper. Further, with well-known methods it is possible to improve Xia's result to $\mathcal T^{2,1} = \mathcal T_{lin}^{2,1}(\BUC(\mathbb C^n))$. Here, $\BUC(\mathbb C^n)$ is the space of all bounded and uniformly continuous functions on $\mathbb C^n$.

While the assumption $t = 1$ was not crucial in Xia's proof, the restriction to $p = 2$ was important. Our first main theorem will be an improvement of that result:
\begin{thmA}
Let $1 < p < \infty$ and $t > 0$. Then, we have
\[ \mathcal T^{p,t} = \mathcal T_{lin}^{p,t}(\BUC(\mathbb C^n)). \]
\end{thmA}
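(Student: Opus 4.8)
The inclusion $\mathcal T_{lin}^{p,t}(\BUC(\mathbb C^n))\subseteq\mathcal T^{p,t}$ is immediate, since $\BUC(\mathbb C^n)\subseteq L^\infty(\mathbb C^n)$ and $\mathcal T^{p,t}$ is a closed linear space, so the content is the reverse inclusion. I would reduce it to two statements about the action of the Weyl translations. Write $W_z\in\mathcal L(F_t^p)$ for the Weyl operators, $\alpha_z(S):=W_zSW_z^{-1}$ for the induced action on $\mathcal L(F_t^p)$, and set
\[ \mathcal C_1:=\{S\in\mathcal L(F_t^p):\ z\mapsto\alpha_z(S)\ \text{is norm-continuous}\}. \]
Since the $W_z$ are uniformly bounded near $z=0$, $\mathcal C_1$ is norm-closed, and joint norm-continuity of multiplication on bounded sets together with $\alpha_z(SR)=\alpha_z(S)\alpha_z(R)$ makes $\mathcal C_1$ a closed subalgebra of $\mathcal L(F_t^p)$. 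It then suffices to prove
\begin{enumerate}[(i)]
\item $\mathcal C_1=\mathcal T_{lin}^{p,t}(\BUC(\mathbb C^n))$, and
\item $T_f\in\mathcal C_1$ for every $f\in L^\infty(\mathbb C^n)$,
\end{enumerate}
because (ii) puts all generators of $\mathcal T^{p,t}$ into the closed algebra $\mathcal C_1$, whence $\mathcal T^{p,t}\subseteq\mathcal C_1$, and (i) then yields $\mathcal T^{p,t}\subseteq\mathcal T_{lin}^{p,t}(\BUC(\mathbb C^n))$.

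Statement (ii) I would obtain from the convolution calculus of quantum harmonic analysis transplanted to $F_t^p$. Let $P_0$ be the rank-one projection of $F_t^p$ onto the constants, and for $g\in L^\infty(\mathbb C^n)$ and $S\in\mathcal L(F_t^p)$ let $g*S:=\int_{\mathbb C^n}g(z)\,\alpha_z(S)\,d\mu(z)$ (a weak$^\ast$ integral, $\mu$ the suitably normalized Gaussian measure). The two inputs are the fundamental identity $T_f=c\,(f*P_0)$ for a dimensional constant $c$, and the $L^p$-counterpart of the Hilbert space estimate $\|g*S\|_{\mathcal L(F_t^p)}\lesssim\|g\|_\infty\,\|S\|_{\mathcal S^1}$, valid for $S$ in a suitable ``trace-class'' operator ideal $\mathcal S^1$. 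From these, $\alpha_w(T_f)-T_f=c\,\big(f*(\alpha_w(P_0)-P_0)\big)$, so that
\[ \|\alpha_w(T_f)-T_f\|_{\mathcal L(F_t^p)}\le c\,\|f\|_\infty\,\|\alpha_w(P_0)-P_0\|_{\mathcal S^1}, \]
and the right-hand side tends to $0$ as $w\to0$ because $P_0$ has finite rank while $W_w\to I$ and $W_w^{-1}\to I$ strongly on $F_t^p$. This also explains the mechanism: applying the Toeplitz quantization regularizes enough that bounded symbols already produce norm-continuous orbits, with no uniform continuity of the symbol required.

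The substantial part is (i), which is the $\BUC$-instance of a correspondence between translation-invariant symbol spaces and translation-invariant operator spaces. The inclusion $\mathcal T_{lin}^{p,t}(\BUC(\mathbb C^n))\subseteq\mathcal C_1$ is easy: for $g\in\BUC(\mathbb C^n)$ one has $\|\alpha_w(T_g)-T_g\|=\|T_{g(\cdot-w)-g}\|\le\|g(\cdot-w)-g\|_\infty\to0$, and $\mathcal C_1$ is closed. For the converse, given $S\in\mathcal C_1$ and a nonnegative approximate identity $(\varphi_\varepsilon)\subseteq L^1(\mathbb C^n)$ one has $\varphi_\varepsilon*S\to S$ in norm precisely because the orbit of $S$ is norm-continuous, so it remains to show $\varphi_\varepsilon*S\in\mathcal T_{lin}^{p,t}(\BUC(\mathbb C^n))$. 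I would do this by a second approximation inside the quantum harmonic analysis dictionary: approximate $\varphi_\varepsilon*S$ in norm by operators with compact Fourier support relative to the $\alpha$-action, and rewrite the latter, via $T_f=c\,(f*P_0)$, as norm-limits of Toeplitz operators with $\BUC$ symbols, invoking that $\mathcal T_{lin}^{p,t}(\BUC(\mathbb C^n))$ contains the compact operators to absorb the lowest-order pieces. The main obstacle of the whole proof is the construction of this $L^p$ quantum harmonic analysis for $1<p<\infty$ --- the convolution products, their boundedness on the correct Banach operator ideals, and the norm-approximation of continuous-orbit operators by Toeplitz operators --- since the Hilbert space and $C^\ast$-algebraic tools available to Xia in the case $p=2$ are no longer at hand; once that framework is in place, (i), and with it Theorem~A, follow.
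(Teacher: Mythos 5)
Your overall strategy matches the paper's exactly: set $\mathcal C_1 = \{A\in\mathcal L(F_t^p):\ z\mapsto\alpha_z(A)\text{ is norm-continuous}\}$, show it is a closed Banach algebra, prove $T_f^t\in\mathcal C_1$ for every $f\in L^\infty(\mathbb C^n)$, and prove $\mathcal C_1=\mathcal T_{lin}^{p,t}(\BUC(\mathbb C^n))$. Your step (ii) is precisely the paper's argument: $T_f^t = \mathcal R_t\ast f$ with $\mathcal R_t$ a rank-one (hence nuclear) operator, so $\|\alpha_w(T_f^t)-T_f^t\|_{op}\le C\,\|\alpha_w(\mathcal R_t)-\mathcal R_t\|_{\mathcal N}\,\|f\|_\infty\to0$. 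The easy inclusion in (i) is also the same.

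The hard half of (i) --- that every $S\in\mathcal C_1$ lies in $\mathcal T_{lin}^{p,t}(\BUC(\mathbb C^n))$ --- is where your sketch has a genuine gap. You correctly reduce to showing that $\varphi_\varepsilon\ast S$, with $(\varphi_\varepsilon)$ an $L^1$ approximate identity, is a norm-limit of Toeplitz operators with $\BUC$ symbols, but your proposed mechanism --- approximate by operators with ``compact Fourier support'' and ``absorb the low-order pieces into the compacts'' --- does not produce Toeplitz operators with $\BUC$ symbols, and the containment $\mathcal K(F_t^p)\subseteq\mathcal T_{lin}^{p,t}(\BUC(\mathbb C^n))$ is itself a \emph{consequence} of the theorem (it is derived afterwards in the paper), not an available input. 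The step the paper actually takes is different and crucial: the set of Berezin transforms $\{\,P_{\mathbb C}\ast C : C\in\mathcal N(F_t^p)\,\}$ is dense in $L^1(\mathbb C^n)$ (by duality, this reduces to injectivity of $f\mapsto T_f^t$, i.e.\ injectivity of the heat transform), so one may replace $\varphi_\varepsilon$ in $L^1$ by some $g=\mathcal R_t\ast C$ with $C\in\mathcal N(F_t^p)$, and then invoke associativity of the mixed convolution $\mathcal A^1\times\mathcal A^1\times\mathcal A^\infty$ to write
\[ g\ast S = (\mathcal R_t\ast C)\ast S = \mathcal R_t\ast(C\ast S) = T^t_{C\ast S}, \]
where $C\ast S\in\BUC(\mathbb C^n)$ since convolving a nuclear with a bounded operator always lands in $\BUC$. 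This density-of-Berezin-transforms plus associativity argument is exactly what turns the convolution-smoothed operator into a Toeplitz operator with $\BUC$ symbol, and it is the ingredient missing from your outline; everything else you wrote is in line with the paper.
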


We want to stress that our approach to this problem gives a more constructive result on how to approximate operators from $\mathcal T^{p,t}$ by Toeplitz operators than Xia's proof, which was based on an abstract $C^\ast$ algebraic argument. Our method of proof of Theorem A will lead us naturally to the study of translation-invariant algebras, both on the side of symbols and on the side of operator algebras. Here, we say that $\mathcal D_0 \subseteq \BUC(\mathbb C^n)$ is translation-invariant if $f(\cdot - z) \in \mathcal D_0$ for all $f \in \mathcal D_0, ~ z \in \mathbb C^n$. These investigations will lead us to our second main result. In the following, $U$ is the operator $Uf(z) = f(-z)$.
\begin{thmB}
Let $\mathcal D_0 \subseteq \BUC(\mathbb C^n)$ be closed, translation- and $U$-invariant. Then, the following are equivalent:
\begin{enumerate}[(i)]
\item $\mathcal D_0$ is a $C^\ast$ algebra with respect to the standard operations and $L^\infty$ norm;
\item $\mathcal T_{lin}^{2,t}(\mathcal D_0) = \mathcal T_{\ast}^{2,t}(\mathcal D_0)$ for all $t > 0$.
\end{enumerate}
If the above equivalent conditions are fulfilled, then we have $\mathcal T_{lin}^{p,t}(\mathcal D_0) = \mathcal T^{p,t}(\mathcal D_0)$ for all $1 < p < \infty, ~ t > 0$.

If further $\mathcal D_0$ is a closed, translation- and $U$-invariant $C^\ast$ subalgebra of $\BUC(\mathbb C^n)$ and $\mathcal I \subseteq \mathcal D_0$ is a closed and translation-invariant subset of $\mathcal D_0$, then the following are equivalent:
\begin{enumerate}[(i*)]
\item $\mathcal I$ is an ideal in $\mathcal D_0$;
\item $\mathcal T_{lin}^{2,t}(\mathcal I)$ is a one-sided ideal in $\mathcal T_{lin}^{2,t}(\mathcal D_0)$ for all $t > 0$;
\item $\mathcal T_{lin}^{2,t}(\mathcal I)$ is a two-sided ideal in $\mathcal T_{lin}^{2,t}(\mathcal D_0)$ for all $t > 0$.
\end{enumerate}
Under these assumptions, $\mathcal T_{lin}^{p,t}(\mathcal I) = \mathcal T^{p,t}(\mathcal I)$ is a closed and two-sided ideal in $\mathcal T_{lin}^{p,t}(\mathcal D_0)$ for all $1 < p < \infty, ~ t > 0$.
\end{thmB}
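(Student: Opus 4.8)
The plan is to reduce the whole statement to the correspondence between translation-invariant symbol spaces and translation-invariant operator spaces developed above, applied first for $p=2$ and then for general $p$. Write $\Gamma_{p,t}(\mathcal D) := \mathcal T_{lin}^{p,t}(\mathcal D)$ for closed, translation-invariant $\mathcal D \subseteq \BUC(\mathbb C^n)$. I would use three properties of this assignment, all of which should be available from the correspondence theory: (i) $\Gamma_{p,t}$ is an order embedding, i.e.\ $\Gamma_{p,t}(\mathcal D_1)\subseteq\Gamma_{p,t}(\mathcal D_2)$ if and only if $\mathcal D_1\subseteq\mathcal D_2$ (equivalently, there is an inclusion-preserving symbol map $\Sigma_{p,t}$ with $\Sigma_{p,t}\circ\Gamma_{p,t}=\mathrm{id}$); (ii) $\Gamma_{2,t}(\overline{\mathcal D})=\Gamma_{2,t}(\mathcal D)^{\ast}$, which is immediate from $T_f^{\ast}=T_{\bar f}$ together with $A\mapsto A^{\ast}$ being an isometric conjugate-linear bijection that commutes with closed linear spans; and (iii) the product rule $\closedspan\bigl(\Gamma_{p,t}(\mathcal D_1)\cdot\Gamma_{p,t}(\mathcal D_2)\bigr)=\Gamma_{p,t}\bigl(\closedspan(\mathcal D_1\cdot\mathcal D_2)\bigr)$, with composition of operators on the left and pointwise product of functions on the right. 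Property (iii) is the substantive input; it is precisely the multiplicative content of the correspondence, and I would prove it through the explicit description of products of Toeplitz operators on the Fock space and the heat-transform asymptotics already used for Theorem A. I expect establishing (iii), in particular the inclusion $\Gamma_{p,t}(\closedspan(\mathcal D_1\cdot\mathcal D_2))\subseteq\closedspan(\Gamma_{p,t}(\mathcal D_1)\cdot\Gamma_{p,t}(\mathcal D_2))$, which says each $T_{fg}$ is approximable by sums of products $T_{f'}T_{g'}$, to be the real obstacle; the rest of the theorem is then a formal consequence.

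Granting (i)--(iii), Part 1 goes as follows. If $\mathcal D_0$ is a $C^{\ast}$-algebra then $\overline{\mathcal D_0}=\mathcal D_0$ and $\closedspan(\mathcal D_0\cdot\mathcal D_0)=\mathcal D_0$, so $\Gamma_{2,t}(\mathcal D_0)$ is self-adjoint by (ii) and closed under composition by (iii); being norm-closed it is a $C^{\ast}$-algebra, hence coincides with the $C^{\ast}$-algebra $\mathcal T_{\ast}^{2,t}(\mathcal D_0)$ it generates, for every $t>0$. Conversely, if $\mathcal T_{lin}^{2,t}(\mathcal D_0)=\mathcal T_{\ast}^{2,t}(\mathcal D_0)$ for some $t$, then $\Gamma_{2,t}(\mathcal D_0)$ is self-adjoint and multiplicatively closed, and applying (ii), (iii) and the injectivity in (i) yields $\overline{\mathcal D_0}=\mathcal D_0$ and $\closedspan(\mathcal D_0\cdot\mathcal D_0)\subseteq\mathcal D_0$, i.e.\ $\mathcal D_0$ is a $C^{\ast}$-subalgebra of $\BUC(\mathbb C^n)$. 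The equality $\mathcal T_{lin}^{p,t}(\mathcal D_0)=\mathcal T^{p,t}(\mathcal D_0)$ for $1<p<\infty$ is the assertion that $\Gamma_{p,t}(\mathcal D_0)$ is already a Banach algebra: by (iii) with $\mathcal D_1=\mathcal D_2=\mathcal D_0$ and $\closedspan(\mathcal D_0\cdot\mathcal D_0)=\mathcal D_0$ one gets $\closedspan(\Gamma_{p,t}(\mathcal D_0)\cdot\Gamma_{p,t}(\mathcal D_0))=\Gamma_{p,t}(\mathcal D_0)$, so $\Gamma_{p,t}(\mathcal D_0)$ is a closed subalgebra containing the generating Toeplitz operators, whence it equals $\mathcal T^{p,t}(\mathcal D_0)$, while the reverse inclusion is trivial.

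For the ideal part, observe first that $\mathcal D_0$ is commutative, so in $\mathcal D_0$ one-sided and two-sided ideals coincide, and every ideal $\mathcal I$ is in particular a closed subalgebra (since $\mathcal I\cdot\mathcal I\subseteq\mathcal D_0\cdot\mathcal I\subseteq\mathcal I$); being closed and translation-invariant, $\mathcal I$ and $\closedspan(\mathcal D_0\cdot\mathcal I)$ are admissible symbol spaces. If (i*) holds then $\closedspan(\mathcal D_0\cdot\mathcal I)\subseteq\mathcal I$, so by (iii) and monotonicity $\closedspan(\Gamma_{2,t}(\mathcal D_0)\cdot\Gamma_{2,t}(\mathcal I))=\Gamma_{2,t}(\closedspan(\mathcal D_0\cdot\mathcal I))\subseteq\Gamma_{2,t}(\mathcal I)$, and symmetrically on the other side, which gives (iii*); the implication (iii*) $\Rightarrow$ (ii*) is trivial; and if (ii*) holds, say $\Gamma_{2,t}(\mathcal D_0)\cdot\Gamma_{2,t}(\mathcal I)\subseteq\Gamma_{2,t}(\mathcal I)$ for all $t$ (the right-sided case being analogous), then by (iii) $\Gamma_{2,t}(\closedspan(\mathcal D_0\cdot\mathcal I))\subseteq\Gamma_{2,t}(\mathcal I)$, and the injectivity in (i) forces $\closedspan(\mathcal D_0\cdot\mathcal I)\subseteq\mathcal I$, i.e.\ (i*) by commutativity. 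Finally, under these hypotheses $\mathcal I$ is a closed subalgebra, so $\mathcal T_{lin}^{p,t}(\mathcal I)=\mathcal T^{p,t}(\mathcal I)$ follows exactly as for $\mathcal D_0$, and the same product-rule computation with $\Gamma_{p,t}$ shows $\mathcal T_{lin}^{p,t}(\mathcal I)$ is a closed two-sided ideal in $\mathcal T_{lin}^{p,t}(\mathcal D_0)$ for all $1<p<\infty$, $t>0$.

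Beyond (iii), two points need care. First, property (i) really requires the correspondence to recover $\mathcal D_0$ --- and likewise $\mathcal I$, $\overline{\mathcal D_0}$, $\closedspan(\mathcal D_0\cdot\mathcal I)$ --- from the associated operator space, i.e.\ $\Sigma_{p,t}\circ\Gamma_{p,t}=\mathrm{id}$ on these spaces; this is where the $U$-invariance of $\mathcal D_0$ and the restriction to $\BUC(\mathbb C^n)$ rather than all of $L^\infty(\mathbb C^n)$ enter, and one must check that $U$-invariance of $\mathcal D_0$ is enough to apply the correspondence to all of its closed translation-invariant subspaces, not only the $U$-invariant ones. Second, the passage to $p\neq2$ cannot rely on $C^{\ast}$-algebraic reasoning and must be carried out entirely within the correspondence picture for $F_t^p$; the only inputs it needs are the $p$-versions of (i) and (iii), which are part of the machinery set up before this theorem.
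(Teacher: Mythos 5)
Your proposed product rule (iii),
\[
\closedspan\bigl(\Gamma_{p,t}(\mathcal D_1)\cdot\Gamma_{p,t}(\mathcal D_2)\bigr)=\Gamma_{p,t}\bigl(\closedspan(\mathcal D_1\cdot\mathcal D_2)\bigr),
\]
is false for fixed $t$, and this is the crux of the matter. The inclusion you actually need for Part~1 is the one you call ``the rest,'' namely that each $T_f^tT_g^t$ lies in $\Gamma_{p,t}(\closedspan(\mathcal D_1\cdot\mathcal D_2))$. Since $T_f^tT_g^t\neq T_{fg}^t$ in general, there is no algebraic identity relating $T_f^tT_g^t$ to Toeplitz operators with symbols in $\mathcal D_1\cdot\mathcal D_2$; if there were, the theorem would be a formal tautology, and the results of \cite{Esmeral_Vasilevski2016, Coburn1999} for Lagrangian-invariant, horizontal and almost periodic symbols would all reduce to bookkeeping. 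The heat-transform asymptotics you invoke give $\|T_f^tT_g^t-T_{fg}^t\|_{op}\to 0$ only as $t\to 0$; they say nothing at a fixed $t$, so they cannot deliver the $t$-by-$t$ identity you state. Properties (i) and (ii) of your assignment $\Gamma_{p,t}$ are correct and are in the paper (the order correspondence of Proposition~\ref{main_result_correspondence} and the remark after it, and $T_f^\ast=T_{\overline f}$), but they are not enough without (iii), and (iii) is precisely what the theorem is fighting against.

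The paper's argument is genuinely different and cannot be compressed into a product rule. What the correspondence theory reduces matters to is a statement about Berezin transforms: $\mathcal T_{lin}^{p,t}(\mathcal D_0)$ is multiplicatively closed iff $(T_{f_1}^t\cdots T_{f_k}^t)^\sim\in\mathcal D_0$ for all $f_j\in\mathcal D_0$ (this is the unnamed theorem after Theorem~\ref{theorem_linearly_generated}). Verifying this requires two non-formal inputs. For (ii)$\Rightarrow$(i) one uses the quantization estimates $\|T_f^tT_g^t-T_{fg}^t\|_{op}\to 0$ and $\|fg-\widetilde{fg}^{(t)}\|_{L^\infty}\to 0$ as $t\to 0$, which is why the hypothesis is demanded for \emph{all} $t>0$: one lets $t\to 0$ to recover $fg\in\mathcal D_0$ from $(T_f^tT_g^t)^\sim\in\mathcal D_0$. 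For (i)$\Rightarrow$(ii) one uses limit operators: for a unital, $\alpha$- and $U$-invariant $C^\ast$ subalgebra $\mathcal A$, one shows that $z\mapsto\alpha_z(AB)$ extends $SOT^\ast$-continuously to the maximal ideal space $\mathcal M(\mathcal A)$, hence $(AB)^\sim$ extends to $C(\mathcal M(\mathcal A))=\mathcal A$, and Theorem~\ref{Theorem_correspondence_theory}(3) then yields $AB\in\mathcal T_{lin}^{2,t}(\mathcal A)$; the $U$-invariance enters through the formula $U(AB)^\sim(z)=\langle (AB)_z1,1\rangle_{F_t^2}$. Neither step is a consequence of the order structure of the correspondence, so the ``formal'' reduction you propose does not get off the ground. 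The ideal part has the same structure: the paper deduces it by showing $(AB)^\sim$ and $(BA)^\sim$ vanish on the invariant boundary set corresponding to $\mathcal I$, again via limit operators, not via any symbol-level product identity.
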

Let us add some words on the background of the method we will use. In the paper \cite{Werner1984}, R. Werner introduced his concept of \emph{Quantum Harmonic Analysis}, as he called it. By this, he in essence means a combination of two things: First of all, he introduced a concept of convolution between objects from $L^1(\mathbb R^{2n}) \times \mathcal N(L^2(\mathbb R^n))$ and objects from $L^\infty(\mathbb R^{2n}) \times \mathcal L(L^2(\mathbb R^n))$. Here, $\mathcal N(L^2(\mathbb R^n))$ denotes the trace class operators on the underlying Hilbert space. His concept of convolutions naturally extends the convolution between functions from $L^1(\mathbb R^{2n})$ and $L^\infty(\mathbb R^{2n})$. Using this convolution formalism (and a theory of \emph{regular operators}, which we will not need in this work), he then studies a natural correspondence between certain subspaces of $L^\infty(\mathbb R^{2n})$ and of $\mathcal L(L^2(\mathbb R^n))$. Based on ideas from that work, and extending them, we obtain the above results for Toeplitz algebras. Further ideas, which go into the proof of Theorem B, are results on quantization estimates \cite{Bauer_Coburn2015, Bauer_Coburn_Hagger} and limit operators \cite{Bauer_Isralowitz2012}, which have been studied out of independent interest and now fit nicely into our theory. Let us also mention that the correspondence theory, which we study in this work, gives rise to several other results, both old and new, with simple proofs. We name a view examples:
\begin{enumerate}
\item The characterization of compact operators from $\mathcal L(F_t^p)$, which was first proven in \cite{Bauer_Isralowitz2012}, is derived.
\item Generalized Schatten-von Neumann classes $\mathcal S^{p_0}(F_t^p)$ over $F_t^p$ are characterized as the closure of the space of Toeplitz operators with $L^{p_0}(\mathbb C^n)$ symbols.
\item We show that the algebras of Lagrangian-invariant Toeplitz operators are linearly generated. This was first obtained in \cite{Esmeral_Vasilevski2016}.
\end{enumerate}

The work is organized as follows: In part 2 we study Werner's Quantum Harmonic Analysis in the setting of $p$-Fock spaces. First, we introduce the appropriate Banach space version of Schatten-von Neumann ideals in Section 2.1. In Section 2.2 we discuss some basics on $p$-Fock spaces and certain natural operators on them. In Section 2.3, we introduce the notion of convolutions as motivated by Werner's work. In Section 2.4, we see that the well-known Toeplitz- and Berezin-maps can be nicely studied in Werner's convolution formalism. In Section 2.5 we fix several results which are part of Werner's Quantum Harmonic Analysis (as formulated in our scope of $p$-Fock spaces) but have not been noted directly in \cite{Werner1984}. They will come in handy later in our work. Section 2.6 concludes the development of Werner's theory in our setting by introducing his concept of Correspondence Theory. Part 3 is then dedicated to applying the theory to Toeplitz operators. Section 3.1 discusses several immediate consequences of the Correspondence Theory to the theory of Toeplitz algebras, e.g. the proof of Theorem A. Sections 3.2 and 3.3 are finally dedicated to the proof of Theorem B, which uses the already mentioned quantization estimates and limit operator techniques.

We want to mention the following: Section 2.3 discusses Werner's convolution formalism in some detail. Most of the theory can be immediately carried over from the original work, yet one must spend some care at certain points. Our presentation of the material in that section was closely inspired by the work \cite{Luef_Skrettingland2018}, which also discusses Werner's convolution formalism in the Schr\"odinger representation, but in closer detail. Section 2.6 follows the corresponding part of Werner's initial work closely and is included for the reader's convenience.

\section{Convolution formalism and correspondence theory over $p$-Fock spaces}

\subsection{Schatten-von Neumann ideals}
Let $X$ be a complex Banach space. By $\mathcal L(X)$ we denote the Banach algebra of all bounded linear operators on $X$. Recall that an operator $A \in \mathcal L(X)$ is \emph{nuclear} if there  are sequences $(x_j) \subset X, (y_j) \subset X'$ with $\sum_{j=1}^\infty \| y_j\|_{X'} \|x_j\|_{X} < \infty$ such that 
\begin{equation}\label{nuclear_operator}
A = \sum_{j=1}^\infty y_j \otimes x_j.
\end{equation}
For such an operator we define
\[ \| A\|_{\mathcal N} := \inf \sum_{j=1}^\infty \| y_j\|_{X'} \| x_j\|_{X}, \]
where the infimum is taken over all possible representations (\ref{nuclear_operator}). We denote by $\mathcal N(X)$ the set of all nuclear operators on $X$. Together with the norm $\| \cdot \|_{\mathcal N}$, this is well-known to be a Banach ideal in $\mathcal L(X)$. If the underlying Banach space $X$ has the approximation property, we can define the nuclear trace for $A \in \mathcal N(X)$ through
\[ \tr(A) = \sum_{j=1}^\infty y_j(x_j), \]
where the trace is independent of the choice of representation (\ref{nuclear_operator}), cf. \cite[Theorem V.1.2]{Gohberg_Goldberg_Krupnik2000}. From now on, we assume that $X$ has the approximation property and is also reflexive. Then, one can show that the duality relations
\[ (\mathcal K(X))' = \mathcal N(X), \quad (\mathcal N(X))' = \mathcal L(X) \]
hold true, where the duality is induced by the trace map:
\[ \langle A, B\rangle = \tr( A B). \]
Here, $\mathcal K(X)$ denotes the compact operators. For details on the general theory of operator ideals, we refer to the books \cite{Defant_Floret1992,Gohberg_Goldberg_Krupnik2000, Pietsch1980}.

We will now deal with the method of complex interpolation. For an introduction to that topic, we refer to \cite{Bergh_Lofstrom1976}, from which we also take our notation.
We want to interpolate between the spaces $\mathcal N(X)$ and $\mathcal L(X)$. Since $\mathcal N(X) \subseteq \mathcal L(X)$, we can use the method of complex interpolation to obtain new ideals between $\mathcal N(X)$ and $\mathcal L(X)$. Using $\mathcal L(X)$ as the ambient Hausdorff topological vector space, in which we embed the compatible couple $\overline{A} := (\mathcal L(X), \mathcal N(X))$ , we get 
\[ \Delta(\overline{A}) := \mathcal N(X) \cap \mathcal L(X) = \mathcal N(X) \quad \text{and} \quad \Sigma(\overline{A}) := \mathcal N(X) + \mathcal L(X) = \mathcal L(X), \]
where equalities are understood as normed vector spaces. Using the complex interpolation method, we obtain a family of subspaces of $\mathcal L(X)$:
\[ (\mathcal L(X), \mathcal N(X))_{[\theta]}, \quad 0 \leq \theta \leq 1. \]
Since $\mathcal N(X) \subseteq \mathcal L(X)$, the family of interpolation spaces is decreasing \cite[Theorem 4.2.1]{Bergh_Lofstrom1976}:
\[ \theta_0 \leq \theta_1: (\mathcal L(X), \mathcal N(X))_{[\theta_0]} \supseteq (\mathcal L(X), \mathcal N(X))_{[\theta_1]}. \]
Further, since $\Delta(\overline{A}) = \mathcal N(X)$ is dense in $(\mathcal L(X), \mathcal N(X))_{[0]}$ \cite[Theorem 4.22]{Bergh_Lofstrom1976}, we obtain using the approximation property:
\[ (\mathcal L(X), \mathcal N(X))_{[0]} = \mathcal K(X). \]
We also have
\[ (\mathcal L(X), \mathcal N(X))_{[1]} = \mathcal N(X). \]
With \cite[Theorem 4.2.2]{Bergh_Lofstrom1976} we obtain
\[ (\mathcal L(X), \mathcal N(X))_{[\theta]} = (\mathcal K(X), \mathcal N(X))_{[\theta]}, \]
i.e. each interpolation space consists of compact operators. Further, since $\mathcal L(X)$ and $\mathcal N(X)$ are ideals, for each $A \in \mathcal L(X)$ we obtain maps (which we denote by the same symbol):
\begin{align*}
L_A: \mathcal L(X) \to \mathcal L(X), \quad B \mapsto AB,\\
L_A: \mathcal N(X) \to \mathcal N(X), \quad B \mapsto AB.
\end{align*}
Interpolating this map, we obtain
\[ L_A: (\mathcal L(X), \mathcal N(X))_{[\theta]} \to (\mathcal L(X), \mathcal N(X))_{[\theta]}, \quad B \mapsto AB, \]
i.e. the interpolated spaces are left ideals. Analogously, they are right ideals. For $1 \leq p_0 < \infty$, we define the ideals of compact operators $\mathcal S^{p_0}(X)$ by
\[ S^{p_0}(X) := (\mathcal L(X), \mathcal N(X))_{[1/p_0]}. \]
In particular,
\[ \mathcal S^1(X) = \mathcal N(X). \]
One can show the norm inequalities
\[ \| A\|_{op} \leq \| A\|_{\mathcal S^{p_0}} \leq \| A\|_{\mathcal S^{q_0}} \]
for $p_0 \geq q_0$, where $\| \cdot \|_{op}$ denotes the operator norm on $\mathcal L(X)$. If $X$ is a Hilbert space, these interpolated ideals are just the usual Schatten-von Neumann ideals \cite{Pietsch_Triebel1968, Simon2005}. Surprisingly, it seems that no concrete description of the ideals $\mathcal S^{p_0}(X)$ is available if $X$ is not a Hilbert space \cite[Section 6.6.6.1]{Pietsch2007}. 

\subsection{$p$-Fock spaces}
For $t > 0$ consider the measure $\mu_t$ on $\mathbb C^n$ given by
\[ d\mu_t(z) = \frac{1}{(\pi t)^n} e^{-\frac{|z|^2}{t}} dV(z), \]
where $V$ denotes the Lebesgue measure on $\mathbb C^n$ and $|\cdot|$ is the Euclidean norm on $\mathbb C^n \cong \mathbb R^{2n}$. $\mu_t$ is well-known to be a probability measure. During the whole paper, let $1 < p < \infty$. We consider the $p$-Fock space
\[ F_t^{p} := L^p(\mathbb C^n, \mu_{2t/p}) \cap \operatorname{Hol}(\mathbb C^n). \]
Each space $F_t^{p}$ is a Banach space. The usual duality properties hold: For $q > 1$ such that $\frac{1}{p} + \frac{1}{q} = 1$ it holds $(F_t^{p})' \cong F_t^{q}$ under the usual duality pairing
\[ \langle f, g\rangle_{F_t^2} := \int_{\mathbb C^n} f(z) \overline{g(z)} d\mu_t(z), \]
and the norms of $(F_t^p)'$ and $F_t^q$ under this identification are equivalent, cf. \cite{Zhu}. When there is no confusion about which duality pairing is meant, we will write $\langle \cdot, \cdot \rangle$ instead of $\langle \cdot, \cdot \rangle_{F_t^2}$. Of course, $(F_t^2, ~\langle \cdot, \cdot\rangle_{F_t^2})$ is a Hilbert space.

For $\alpha \in \mathbb N_0^n$ we define $e_\alpha \in F_t^p$ through
\[ e_\alpha(z) = \sqrt{\frac{1}{\alpha! t^{|\alpha|}}} z^\alpha. \]
It is well-known that this is an orthonormal basis for $F_t^2$. For general $p$, it is easy to prove that this is still a Schauder basis, using Taylor expansion around the origin for each $f \in F_t^{p}$. Since every Banach space with Schauder basis has the approximation property, we obtain:
\begin{prop}\label{Proposition_reflexivity} $F_t^{p}$ is a reflexive Banach space with approximation property.
\end{prop}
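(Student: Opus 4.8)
The proposition is the conjunction of two essentially independent assertions, and I would handle them separately. For reflexivity: by definition $F_t^{p}$ is a linear subspace of $L^p(\mathbb C^n,\mu_{2t/p})$, and it has already been recorded to be a Banach space, i.e.\ a \emph{closed} subspace. Since $1<p<\infty$, the space $L^p(\mathbb C^n,\mu_{2t/p})$ is reflexive, and a closed subspace of a reflexive Banach space is reflexive; hence $F_t^p$ is reflexive. (Closedness of $F_t^p$ in $L^p$, if one wants to see it explicitly, is elementary: the sub-mean-value inequality for $|f|^p$ against the Gaussian weight shows that the point evaluations $f\mapsto f(z)$ are bounded functionals on $F_t^p$, so $L^p$-convergence of holomorphic functions forces locally uniform convergence and the limit stays holomorphic. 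One could instead apply the stated duality $(F_t^p)'\cong F_t^q$ twice and check that the induced map $F_t^p\to(F_t^p)''$ is the canonical embedding, but the subspace argument is shorter.)

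For the approximation property: since a Banach space possessing a Schauder basis has the approximation property, it suffices to show that $(e_\alpha)_{\alpha\in\mathbb N_0^n}$, under a suitable enumeration, is a Schauder basis of $F_t^p$. For $f\in F_t^p$, holomorphy on $\mathbb C^n$ supplies the Taylor expansion $f=\sum_\alpha c_\alpha(f)\sqrt{\alpha!\,t^{|\alpha|}}\,e_\alpha$, converging uniformly on compacta, with the coefficients $c_\alpha(f)$ uniquely determined by $f$; so uniqueness of the representation is automatic, and only norm convergence of the partial sums requires attention. I would reduce this to the torus: restricting $f$ to the polytori $\{|z_j|=\rho_j\}$ and using the mixed-norm ($L^p$ over the angles, then $L^p$ over the radii against the radial Gaussian) description of $\|f\|_{F_t^p}$ identifies the partial-sum operators $S_N$ — say the truncations to $\{\alpha:\ |\alpha|\le N\}$ — fibrewise with Dirichlet-type multipliers on $L^p(\mathbb T^n)$ acting on functions with spectrum in $\mathbb N_0^n$; these decompose as compositions of half-space / directional-Hilbert-transform multipliers and are therefore bounded on $L^p(\mathbb T^n)$ uniformly in $N$ for $1<p<\infty$. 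Integrating against the Gaussian weight gives $\sup_N\|S_N\|_{\mathcal L(F_t^p)}<\infty$, and since each $S_N$ fixes polynomials while polynomials are dense in $F_t^p$ (approximate $f$ by its dilates $f(r\,\cdot)$ as $r\uparrow1$, whose Taylor series converge geometrically and hence in $F_t^p$ norm), a standard density argument yields $S_Nf\to f$ for every $f\in F_t^p$.

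The only step with genuine content is the uniform estimate $\sup_N\|S_N\|<\infty$; everything else — uniqueness of the Taylor coefficients, reflexivity of $L^p$, density of polynomials, and the implication ``Schauder basis $\Rightarrow$ approximation property'' — is soft. I would add that if one is content to establish the approximation property without exhibiting an actual Schauder basis, this estimate can be bypassed: the tensor-Fej\'er (Ces\`aro) means $\sigma_N$ are finite-rank operators, uniformly bounded on $F_t^p$ for all $1<p<\infty$ because the Fej\'er kernel on $\mathbb T^n$ is a positive summability kernel, and $\sigma_Nf\to f$, so the sequence $(\sigma_N)$ witnesses the approximation property directly. This robustness is presumably why the claim is ``easy''.
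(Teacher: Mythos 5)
Your reflexivity argument differs from the paper's (implicit) one but both are valid: the paper relies on the stated duality $(F_t^p)' \cong F_t^q$ (from \cite{Zhu}), whereas you pass through the fact that $F_t^p$ is a closed subspace of the reflexive space $L^p(\mathbb C^n,\mu_{2t/p})$. The closed-subspace route is arguably cleaner since it does not require checking that the biduality map is the canonical embedding. For the approximation property, the paper simply asserts that $(e_\alpha)$ is a Schauder basis ``using Taylor expansion'' and leaves the verification to the reader; you actually supply the missing substance, correctly identifying that the real issue is a uniform bound on partial-sum projections, and reducing it to simplex Fourier multipliers on $L^p(\mathbb T^n)$ via the mixed-norm (angular-then-radial) description of the $F_t^p$ norm. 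One technicality worth flagging: the degree truncations $S_N$ that your multiplier argument controls produce a Schauder \emph{decomposition} by total degree (a uniformly bounded sequence of commuting finite-rank projections converging strongly to the identity), not literally a Schauder basis under a linear ordering of $\mathbb N_0^n$ when $n>1$ — for that one would also need to control partial sums inside each degree block, whose dimension grows. But a bounded Schauder decomposition already gives the (bounded) approximation property, which is all that is used downstream, and you explicitly note the Fej\'er/Ces\`aro alternative which sidesteps the basis question entirely and is positivity-based rather than Calder\'on--Zygmund-based. So your proposal is correct, more detailed than the paper, and offers a genuinely cleaner fallback (Fej\'er means) that I would actually recommend in print over the Dirichlet/simplex-multiplier route.
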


In the following let $t > 0$ and $1<p<\infty$ be fixed. For $1 \leq p_0 < \infty$ we will denote by $L^{p_0}(\mathbb C^n)$ the Lebesgue space of (equivalence classes of) $p_0$-integrable functions with respect to the Lebesgue measure. $L^\infty(\mathbb C^n)$ refers to the space of measurable and essentially bounded functions on $\mathbb C^n$. For $1 \leq p_0 < \infty$ we define the Banach spaces
\begin{align*}
\mathcal A^{p_0} :&= L^{p_0}(\mathbb C^n) \oplus \mathcal S^{p_0}(F_t^{p})
\end{align*}
and
\begin{align*}
\mathcal A^\infty :&=  L^\infty(\mathbb C^n) \oplus \mathcal L(F_t^{p}),
\end{align*}
where we do not mention $t$ and $p$ in the notation for readability. These spaces can be equipped with the norms
\[ \| f \oplus A\|_{\mathcal A^{p_0}} := \max (\| f\|_{L^{p_0}}, \| A\|_{\mathcal S^{p_0}}) \]
and
\[ \| f \oplus A\|_{\mathcal A^{\infty}} := \max (\| f\|_{L^\infty}, \| A\|_{op}). \]
Note that $\mathcal A^\infty$ is a Banach algebra. It has the subalgebra
\[ \mathcal K := C_0(\mathbb C^n) \oplus \mathcal K(F_t^p), \]
where $C_0(\mathbb C^n)$ denotes the continuous functions on $\mathbb C^n$ vanishing at infinity. On $\mathcal A^1$ we have the trace map
\[ \tr(f \oplus A) := \tr(f) + \tr(A), \]
where $\tr(A)$ denotes the nuclear trace and $\tr(f) :=  \int_{\mathbb C^n} f(z) dV(z)$. As is well-known, we can identify $(L^1(\mathbb C^n))' \cong L^\infty(\mathbb C^n)$ and, as we already mentioned earlier, $(\mathcal N(X))' \cong \mathcal L(X)$ under the dual pairing induced by the trace map.

Recall that $F_t^2$ is a reproducing kernel Hilbert space with reproducing kernel
\[ K_z^t(w) := K^t(z,w) := e^{\frac{w \cdot \overline z}{t}} \]
for each $z \in \mathbb C^n$. The normalized reproducing kernels are defined through
\[ k_z^t(w) := \frac{K_z^t(w)}{\| K_z^t\|_{F_t^2}} = e^{\frac{w \cdot \overline z}{t} - \frac{1}{2t}|z|^2}. \]
It is easily seen that $k_z^t \in F_t^p$ for each $p$. For $z \in \mathbb C^n$ we define the Weyl operator $W_z \in \mathcal L(F_t^p)$ as
\[ W_z f(w) = k_z^t(w) f(w-z). \]
One can show that each $W_z$ is an isometric isomorphism and $W_z^{-1} = W_{-z}$. Further, the Banach space adjoints $W_z^\ast$ of $W_z \in \mathcal L(F_t^p)$ is given by $W_{-z} \in \mathcal L(F_t^{q})$ under the standard dual pairing (where $q$ is the exponent conjugate to $p$). For $p = 2$, these operators are unitary. One readily checks the identity
\begin{equation}\label{identity_weyl_operators}
W_z W_w = e^{-i\frac{\im(z \cdot \overline w)}{t}} W_{z+w}.
\end{equation}
With these operators, we can define an action $\alpha$ of $\mathbb C^n$ on the Banach algebras $\mathcal A^{p_0}$ and $\mathcal A^\infty$. For a measurable function $f: \mathbb C^n \to \mathbb C$ we set
\[ \alpha_z (f)(w) := f(w-z). \]
Further, for an operator $A \in \mathcal L(F_t^p)$ define
\[ \alpha_z (A) := W_z A W_{-z}. \]
$\alpha_z$ leaves $\mathcal N(F_t^p)$ invariant and therefore also $\mathcal S^{p_0}(F_t^p)$ for every $1 \leq p_0 < \infty$. Finally, for $f \oplus A$ from $\mathcal A^{p_0}$ or $\mathcal A^\infty$ we set
\[ \alpha_z (f \oplus A) := (\alpha_z f) \oplus (\alpha_z A). \]
Using Equation (\ref{identity_weyl_operators}), it is straightforward to show that
\begin{equation}\label{identity_group_action}
\alpha_z(\alpha_w(f \oplus A)) = \alpha_{z+w}(f \oplus A),
\end{equation}
which shows that $\alpha$ is indeed a group action of $\mathbb C^n$ on $\mathcal A^{p_0}$ and $\mathcal A^\infty$. Let us mention that $\| \alpha_z(A)\|_{\mathcal N} = \| A\|_{\mathcal N}$ for all $A \in \mathcal N(F_t^p)$, as one sees from the definition of the nuclear norm.
\begin{lem}\label{lemma_continuity_shifts}\begin{enumerate}[(1)]
\item Let $1 \leq p_0 < \infty$ and $f \oplus A \in \mathcal A^{p_0}$ . Then, $z \mapsto \alpha_z(f \oplus A)$ is norm-continuous on $\mathcal A^{p_0}$. 
\item Let $f \oplus A \in \mathcal K$. Then, $z \mapsto \alpha_z(f \oplus A)$ is norm-continuous in $\mathcal K$.
\item Let $f \in L^\infty(\mathbb C^n)$. Then, the map $z \mapsto \alpha_z(f)$ is weak$^\ast$ continuous.
\item Let $A \in \mathcal L(F_t^p)$. Then, the map $z \mapsto \alpha_z(A)$ is continuous with respect to the strong operator topology.
\end{enumerate}
\end{lem}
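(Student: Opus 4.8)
The plan is to reduce every assertion to one ingredient: the \emph{strong continuity of the Weyl operators}, namely that for each $1 < r < \infty$ the map $z \mapsto W_z$ is continuous from $\mathbb C^n$ into $\mathcal L(F_t^r)$ in the strong operator topology. Since each $W_z$ is isometric, a standard $\varepsilon/3$ argument reduces this to showing $W_z f \to W_{z_0} f$ in $F_t^r$ for $f$ in the dense set of polynomials, hence by linearity for $f = e_\alpha$. For $|z - z_0| \le 1$ one has the pointwise majorization
\[ |W_z e_\alpha(w)| = e^{\re(w\cdot\overline z)/t - |z|^2/(2t)}\,|e_\alpha(w-z)| \le C_{\alpha,z_0}\, e^{(|z_0|+1)|w|/t}\,(1+|w|)^{|\alpha|}, \]
and the right-hand side belongs to $L^r(\mathbb C^n,\mu_{2t/r})$ because the Gaussian weight absorbs the exponential factor. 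Since $W_z e_\alpha(w) \to W_{z_0}e_\alpha(w)$ pointwise as $z \to z_0$, dominated convergence gives the claim. We will apply this with $r = p$ and with $r = q$, the exponent conjugate to $p$, which again lies in $(1,\infty)$.

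For (1), the function component is the classical continuity of translation on $L^{p_0}(\mathbb C^n)$, $1 \le p_0 < \infty$. For the operator component it suffices, by the group law $\alpha_z = \alpha_{z_0}\circ\alpha_{z-z_0}$ together with the fact that $\alpha_{z_0}$ is isometric on $\mathcal N(F_t^p)$ (and, by interpolation, on each $\mathcal S^{p_0}(F_t^p)$), to check continuity at $z = 0$. On a rank-one operator $y \otimes x$ with $y \in (F_t^p)' \cong F_t^q$ and $x \in F_t^p$, one computes $\alpha_z(y \otimes x) = (W_{-z}^\ast y)\otimes(W_z x)$, where $W_{-z}^\ast$ is again a Weyl operator, this time on $F_t^q$; hence
\[ \|\alpha_z(y\otimes x) - y\otimes x\|_{\mathcal N} \le \|W_z x - x\|_{F_t^p}\,\|W_{-z}^\ast y\|_{F_t^q} + \|x\|_{F_t^p}\,\|W_{-z}^\ast y - y\|_{F_t^q}, \]
which tends to $0$ as $z \to 0$ by the key ingredient. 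Writing a general nuclear operator as an absolutely summable series of rank-one operators, splitting off a finite initial segment and estimating the remainder by twice its nuclear norm (using that $\alpha_z$ is isometric on $\mathcal N(F_t^p)$), one obtains continuity on all of $\mathcal N(F_t^p) = \mathcal S^1(F_t^p)$. For $1 < p_0 < \infty$ one uses that $\mathcal N(F_t^p)$ is dense in $\mathcal S^{p_0}(F_t^p)$ and $\|\cdot\|_{\mathcal S^{p_0}} \le \|\cdot\|_{\mathcal N}$, so the continuity just obtained holds in the $\mathcal S^{p_0}$-norm on the dense subspace $\mathcal N(F_t^p)$; combined with the uniform boundedness (indeed isometry) of $\{\alpha_z\}$ on $\mathcal S^{p_0}(F_t^p)$, this extends to all of $\mathcal S^{p_0}(F_t^p)$. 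As the norm on $\mathcal A^{p_0}$ is the maximum of the two component norms, (1) follows.

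Part (2) splits the same way. Every $f \in C_0(\mathbb C^n)$ is uniformly continuous, so $\|\alpha_z f - f\|_\infty = \sup_w |f(w-z) - f(w)| \to 0$. On $\mathcal K(F_t^p)$, the finite-rank operators — a fortiori $\mathcal N(F_t^p)$ — are dense since $F_t^p$ has the approximation property; $\alpha_z$ is isometric on $\mathcal L(F_t^p)$, hence on $\mathcal K(F_t^p)$; and on $\mathcal N(F_t^p)$ we already have continuity in the nuclear norm, a fortiori in the operator norm. The $\varepsilon/3$ argument finishes it. For (3), given $g \in L^1(\mathbb C^n)$ the change of variables $w \mapsto w - z$ yields $\langle \alpha_z f, g\rangle = \int_{\mathbb C^n} f(w)\, g(w+z)\, dV(w) = \langle f, \alpha_{-z} g\rangle$, so $|\langle \alpha_z f - \alpha_{z_0}f,\, g\rangle| \le \|f\|_\infty\,\|\alpha_{-z}g - \alpha_{-z_0}g\|_{L^1} \to 0$ by (1) applied with $p_0 = 1$; this is precisely weak$^\ast$ continuity. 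Finally, for (4) write $\alpha_z(A) - \alpha_{z_0}(A) = W_z A (W_{-z} - W_{-z_0}) + (W_z - W_{z_0})\,A\,W_{-z_0}$; applied to a fixed $f \in F_t^p$, the first summand has norm at most $\|A\|\,\|(W_{-z}-W_{-z_0})f\|$ and the second equals $\|(W_z - W_{z_0})g\|$ with $g := A W_{-z_0}f$, and both tend to $0$ by the strong continuity of the Weyl operators. The only real obstacle is that key ingredient: one must dominate the reproducing-kernel factor $k_z^t$, which grows in $w$, uniformly for $z$ near a given point — achieved by the explicit exponential bound above — while the rest is routine bookkeeping with isometry, density, and interpolation.
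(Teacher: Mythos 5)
Your proposal is correct and follows essentially the same path as the paper's proof: reduce to continuity at the base point, compute $\alpha_z$ on rank-one operators, estimate in the nuclear norm using strong continuity of the Weyl operators, and extend by density and the norm inequalities between the ideals. The only real difference is that you spell out in detail (via the explicit exponential majorant and dominated convergence) the claim that $W_z \to W_{z_0}$ strongly on $F_t^p$ and $F_t^q$, which the paper dismisses as "easily verified."
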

\begin{proof}
It suffices to verify continuity at $z = 0$. Norm-continuity of $z \mapsto \alpha_z(f)$ on $L^{p_0}(\mathbb C^n)$ ($1 \leq p_0 < \infty$), $C_0(\mathbb C^n)$ and weak$^\ast$ continuity on $L^\infty(\mathbb C^n)$ are well-known. Let
\[ A = y \otimes x \]
be a rank one operator ($y \in (F_t^p)', x \in F_t^p$) and identify $y$ with $\widetilde{y} \in F_t^q$ under the usual duality pairing with equivalent norms. Hence, for $f \in F_t^p$ it holds
\begin{align*}
\alpha_z (y \otimes x)(f) &= W_z(x) \int_{\mathbb C^n} W_{-z}(f)(w) \overline{\widetilde{y}(w)}d\mu_t(w)\\
&= W_z(x) \int_{\mathbb C^n} f(w) \overline{\widetilde{W_{-z}^\ast(y)}(w)} d\mu_t(w)\\
&= ((W_{-z}^\ast y) \otimes (W_z x)) (f).
\end{align*}
Now, 
\begin{align*}
\| \alpha_z(A) - A\|_{\mathcal N} &= \|  (W_{-z}^\ast y) \otimes (W_{z} x) - y \otimes x \|_{\mathcal N}\\
&\leq \| (W_{-z}^\ast y - y) \otimes (W_{z} x)\|_{\mathcal N} + \|y \otimes (W_{z} x - x)\|_{\mathcal N}\\
&\leq \| W_{-z}^\ast y - y\|_{(F_t^p)'} \| W_{z} x\|_{F_t^p} + \| y\|_{(F_t^p)'} \|W_{z} x - x\|_{F_t^p}\\
&\leq C\| W_z\widetilde{y} -\widetilde{y}\|_{F_t^q} \| W_z x\|_{F_t^p} + \| y\|_{(F_t^p)'} \| W_z x - x\|_{F_t^p}\\
&\to 0, \quad z \to 0,
\end{align*}
where we used that $W_z \to \operatorname{Id}$ strongly as $z \to 0$ on $F_t^p$ and $F_t^q$, which is easily verified. This implies $\| \alpha_z(A) - A\|_{\mathcal N} \to 0$ for all finite rank operators. Now, (1) and (2) follow through approximation by finite rank operators and the norm inequalities between the operator ideals. 

Let $A \in \mathcal L(F_t^p)$. Then, the continuity of the map $z \mapsto \alpha_z(A)$ in strong operator topology follows from the continuity of $z \mapsto W_z$ with respect to the strong operator topology.
\end{proof}
We will also need to consider the subspace of $\mathcal A^\infty$, on which the action of $\alpha$ is ``well-behaved'' in a suitable sense. We define
\begin{align*}
\mathcal C_0 &:= \{ f \in L^\infty(\mathbb C^n); ~ z \mapsto \alpha_z (f) \text{ is } \| \cdot\|_{L^\infty} \text{-continuous}\},\\
\mathcal C_1 &:= \{ A \in \mathcal L(F_t^p); ~ z \mapsto \alpha_z (A) \text{ is } \|\cdot\|_{op} \text{-continuous} \}.
\end{align*}
Set
\[ \mathcal C := \mathcal C_0 \oplus \mathcal C_1 \subset \mathcal A^\infty. \]
One can show that $\mathcal C_0 = \BUC(\mathbb C^n)$, the set of bounded and uniformly continuous functions. A precise description of $\mathcal C_1$ is not obvious, we will obtain one later. For the moment, it suffices to mention that $\mathcal C_0$ is a $C^\ast$ algebra, while $\mathcal C_1$ in general is a Banach algebra (being a $C^\ast$ algebra for $p = 2$).
\subsection{Convolutions}
In this section, we introduce the notion of convolution between objects from $\mathcal A^1$ and $\mathcal A^{p_0}$. First, we discuss how to define the convolution as a map $\ast: \mathcal A^1 \times \mathcal A^1 \to \mathcal A^1$ and derive certain properties. Afterwards, using the duality $(\mathcal A^1)' \cong \mathcal A^\infty$, we extend the convolution to a map $\mathcal A^1 \times \mathcal A^\infty \to \mathcal A^\infty$.

We follow the same path as in the Hilbert space case \cite{Luef_Skrettingland2018, Werner1984}. Some proofs follow analogously, yet several constructions and proofs need to be handled with some more care and several adaptations are necessary.

Convolution between two functions from $L^1(\mathbb C^n)$ is well known. For completeness, we define this to be
\[ f \ast g(z) := \int_{\mathbb C^n}  f(w) g(z-w) dV(w), \]
which is again in $L^1(\mathbb C^n)$. Our first goal is to extend this convolution to a convolution between two elements from $\mathcal A^1$. 
Let $f \in L^1(\mathbb C^n)$ and $A \in \mathcal N(F_t^p)$. We are going to define the convolution $f \ast A$ as a Bochner integral. We refer to the literature (e.g. \cite{Diestel_Uhl1977}) for an introduction to Bochner integration.

First, assume $f \in C_c(\mathbb C^n)$, the space of continuous functions with compact support. By Lemma \ref{lemma_continuity_shifts}, $z \mapsto f(z) \alpha_z(A)$ is continuous from $\mathbb C^n$ to $\mathcal N(F_t^p)$ and therefore weakly measurable. Further, the map is separable-valued (take e.g. $\{ f(z) \alpha_z(A); ~ z \in \mathbb Q^n \times i\mathbb Q^n\}$ as a countable dense subset of its range). Hence, the Pettis measurability theorem states that $z \mapsto f(z)\alpha_z(A)$ is strongly measurable. It is now standard to show that strong measurability carries over to $z \mapsto f(z) \alpha_z(A)$ for each $f \in L^1(\mathbb C^n)$ by approximation with $C_c(\mathbb C^n)$-functions.
Since $z \mapsto \| f(z) \alpha_z(A)\|_{\mathcal N}$ is Lebesgue integrable on $\mathbb C^n$, the function is Bochner integrable in the Banach space $\mathcal N(F_t^p)$. We define
\[ f \ast A := A \ast f := \int_{\mathbb C^n} f(z) \alpha_z(A) dV(z) \in \mathcal N(F_t^p),~ f \in L^1(\mathbb C^n), A \in \mathcal N(F_t^p). \]
It is immediate that
\[ \| f \ast A\|_{\mathcal N} \leq \int_{\mathbb C^n} |f(z)| \| \alpha_z(A)\|_{\mathcal N} dV(z) \leq \| f\|_{L^1} \| A\|_{\mathcal N}. \]
Let $U \in \mathcal L(F_t^p)$ be the isometric operator
\[ Uf(w) = f(-w). \]
For two operators $A, B \in \mathcal N(F_t^p)$, we define their convolution $A \ast B$ to be the function
\begin{equation*}
A \ast B: z \mapsto \tr(A (\alpha_z (UBU))).
\end{equation*}
\begin{lem}
Let $A, B \in \mathcal N(F_t^p)$. Then, the function $A \ast B$ is continuous and we have
\begin{equation}\label{trace_estimate}
\| A \ast B\|_{L^1} \leq C\| A\|_{\mathcal N} \| B\|_{\mathcal N}
\end{equation}
for some constant $C >0$ depending only on $n, p$ and $t$. Further,
\begin{equation}\label{equation_trace}
\tr(A \ast B) = (\pi t)^n \tr(A) \tr(B). 
\end{equation}
In particular, $A \ast B \in L^1(\mathbb C^n)$.
\end{lem}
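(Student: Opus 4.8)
The plan is to reduce everything to the rank-one case and then use density of finite-rank operators, exploiting that both sides of every claimed identity/inequality are continuous (or bounded) bilinearly in $A$ and $B$ with respect to the nuclear norm. So first I would take $A = u \otimes v$ and $B = x \otimes y$ with $v, y \in F_t^p$ and $u, x \in (F_t^p)'$, identified with $\widetilde u, \widetilde y \in F_t^q$ under the duality pairing. A direct computation, using the formula for $\alpha_z$ on rank-one operators already obtained in the proof of Lemma \ref{lemma_continuity_shifts} (namely $\alpha_z(y \otimes x) = (W_{-z}^\ast y) \otimes (W_z x)$) together with the explicit action of $U$ and the Weyl operators, should express $A \ast B(z) = \tr\bigl(A\,\alpha_z(UBU)\bigr)$ as a product of two entire-antientire Gaussian-type factors in $z$ coming from the inner products $\langle W_z x', v\rangle_{F_t^2}$ and $\langle \widetilde u, W_z y'\rangle_{F_t^2}$ (where $x', y'$ are the images of $x, y$ under $U$). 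Since $z \mapsto W_z$ is strongly continuous on both $F_t^p$ and $F_t^q$, each such factor is continuous, hence $A \ast B$ is continuous for rank-one, then for finite-rank operators, and then for all $A, B \in \mathcal N(F_t^p)$ by the uniform estimate \eqref{trace_estimate} (once proven) which shows the finite-rank approximations converge uniformly.

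For the $L^1$-bound \eqref{trace_estimate}, the cleanest route I would try is \emph{not} to integrate the rank-one formula directly but to reuse the already-established convolution $L^1(\mathbb C^n) \times \mathcal N(F_t^p) \to \mathcal N(F_t^p)$. Specifically, I expect the identity $\tr\bigl((f \ast A)\, C\bigr) = \int_{\mathbb C^n} f(z)\, (A \ast (UCU))(-z)\, dV(z)$ type of manipulation, or more simply: write $\|A \ast B\|_{L^1} = \int |\tr(A \alpha_z(UBU))|\,dV(z)$ and dominate $|\tr(A \alpha_z(UBU))| \le \|A\|_{\mathcal N}\,\|\alpha_z(UBU)\|_{op}$. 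This alone does not give an integrable majorant, so instead I would pass through the Schatten/Hilbert–Schmidt structure via the embedding into $p = 2$: using the norm inequalities between operator ideals and the fact that nuclear operators on $F_t^p$ restrict nicely, reduce to the Hilbert-space computation where $A \ast B$ for $A, B$ trace-class is the classical QHA convolution, for which $\|A \ast B\|_{L^1} \le (2\pi)^{\text{(something)}} \|A\|_{\text{tr}}\|B\|_{\text{tr}}$ is standard (Werner, or \cite{Luef_Skrettingland2018}). The constant $C$ depending only on $n, p, t$ will absorb both the Gaussian normalization $(\pi t)^n$ appearing in $\mu_t$ and the norm-equivalence constants between $(F_t^p)'$ and $F_t^q$.

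For the trace formula \eqref{equation_trace}, I would again test on rank-one operators: with $A = u\otimes v$, $B = x \otimes y$ one has $\tr(A) = \langle \widetilde u, v\rangle_{F_t^2}$-type expressions, and $\int_{\mathbb C^n} A \ast B(z)\, dV(z)$ becomes an integral of the product of the two Gaussian factors identified in step one; the integral factors as a product of two integrals, each evaluating (by the reproducing property of $K^t$ and the Gaussian integral $\int k_z^t\, d\mu_t$) to $\tr(A)$ resp.\ $\tr(B)$ up to the factor $(\pi t)^n$. I would verify the constant on the single concrete example $A = B = k_0^t \otimes k_0^t$ to pin down $(\pi t)^n$ exactly, then extend by bilinearity and nuclear-norm density, using \eqref{trace_estimate} to justify interchanging the trace/integral with the limit. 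The main obstacle I anticipate is the bookkeeping in the bound \eqref{trace_estimate}: unlike the Hilbert-space case there is no Cauchy–Schwarz-type self-adjoint trick available on $F_t^p$, so one must either carefully route through $p=2$ (which requires checking that a nuclear operator on $F_t^p$ is Hilbert–Schmidt, or at least trace-class, on $F_t^2$ with controlled norm — plausible but needing the explicit kernels) or else estimate the rank-one Gaussian factors by hand and sum, tracking that the $\ell^1$-summability of the nuclear representation survives the convolution. Everything else is routine Bochner-integral and Fubini manipulation justified by the continuity and integrability established in step one.
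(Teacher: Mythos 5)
Your outline of the rank-one reduction and the continuity argument matches the paper, and you correctly identify the $L^1$ bound \eqref{trace_estimate} as the genuine difficulty — the naive domination $|\tr(A\alpha_z(UBU))| \le \|A\|_{\mathcal N}\|\alpha_z(UBU)\|_{op}$ gives no integrable majorant. But your proposed fix is the wrong one, and it rests on a premise that is in fact false.

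You write that ``unlike the Hilbert-space case there is no Cauchy--Schwarz-type self-adjoint trick available on $F_t^p$,'' and propose to route through $p=2$ by arguing that a nuclear operator on $F_t^p$ is trace-class or Hilbert--Schmidt on $F_t^2$ with controlled norm. That route does not work: a rank-one operator $y\otimes x$ with $\|y\|_{(F_t^p)'}\|x\|_{F_t^p}$ small can have arbitrarily large $F_t^2$-normalized pieces, so there is no embedding $\mathcal N(F_t^p)\hookrightarrow \mathcal N(F_t^2)$ (or $\mathcal S^2(F_t^2)$) with a norm bound depending only on $n,p,t$. Moreover, a Cauchy--Schwarz trick \emph{is} available precisely because the duality pairing between $F_t^p$ and $(F_t^p)'\cong F_t^q$ is still the $F_t^2$ inner product: the two factors into which $\tr(A\alpha_z(UBU))$ splits for rank-one $A,B$ are $F_t^2$-pairings $\langle W_{-z}x_1, U\widetilde y_2\rangle_{F_t^2}$ and $\langle W_zUx_2,\widetilde y_1\rangle_{F_t^2}$, so Cauchy--Schwarz in $L^2(\mathbb C^n, dV)$ applies as soon as one knows each factor is square-integrable in $z$.

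The order of the two statements is the other part you have backwards. The paper first proves the trace identity \eqref{equation_trace} for rank-one operators with \emph{polynomial} entries, where Fubini is justified directly from the explicit Gaussian computation (the inner integral in $z$ evaluates via the reproducing property to $(\pi t)^n x_1(v)x_2(-w)$). Only then does it derive the $L^1$ bound: by specializing the polynomial entries so that the two factors become complex conjugates of each other, the integrand becomes $|\langle y,W_zUx\rangle|^2\ge 0$, and \eqref{equation_trace} gives the exact $L^2$-norm $\|\langle y,W_zUx\rangle\|_{L^2}=(\pi t)^{n/2}|\tr(y\otimes x)|\le(\pi t)^{n/2}\|y\|_{(F_t^p)'}\|x\|_{F_t^p}$ for each factor separately. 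Cauchy--Schwarz (H\"older) on the product of two such $L^2$ functions, followed by Fatou to pass from polynomials to general $x_j\in F_t^p$, $y_j\in(F_t^p)'$, gives \eqref{trace_estimate} for rank-one operators; summing the nuclear representation gives it in general. Your plan to prove \eqref{trace_estimate} first and then use it to justify the limit in \eqref{equation_trace} therefore has the dependency chain inverted, and the tool you reach for to get \eqref{trace_estimate} independently is not available.
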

\begin{proof}
First, observe that continuity of $A \ast B$ follows immediately from the continuity of $z \mapsto \alpha_z(UBU)$ in $\mathcal N(F_t^p)$.

Assume that $A$ and $B$ are both rank one operators. Hence,
\[ A = y_1 \otimes x_1, \quad B = y_2 \otimes x_2, \]
with $y_j \in (F_t^p)', ~x_j \in F_t^p$. We again identify $y_j$ with $\widetilde{y_j} \in F_t^q$. Then, 
\begin{align*}
A \alpha_z(UBU) &= (y_1 \otimes x_1) \alpha_z( (U^\ast y_2) \otimes (Ux_2))\\
&= (y_1 \otimes x_1) ((W_{-z}^\ast U^\ast y_2) \otimes (W_z Ux_2)).
\end{align*}
This is again a rank one operator, and one readily checks
\begin{align*}
A\alpha_z(UBU) = ((W_{-z}^\ast U^\ast y_2)\otimes x_1) \langle W_z Ux_2, y_1\rangle.
\end{align*}
Further, 
\begin{align*}
\tr(A \alpha_z(UBU)) &= \langle x_1, W_{-z}^\ast U^\ast y_2\rangle \langle W_z Ux_2, y_1\rangle\\
&= \int_{\mathbb C^n} W_{-z}(x_1)(w) \overline{U\widetilde{y_2}(w)} d\mu_t(w) \int_{\mathbb C^n} W_z U(x_2)(v) \overline{\widetilde{y_1}(v)}d\mu_t(v)\\
&= \int_{\mathbb C^n} x_1(w+z) k_{-z}^t(w) \overline{\widetilde{y_2}(-w)}d\mu_t(w) \int_{\mathbb C^n} x_2(z-v) k_z^t(v) \overline{\widetilde{y_1}(v)}d\mu_t(v).
\end{align*}
Assume for the moment that $x_j$ and $\widetilde{y_j}$ are polynomials (which are dense in $F_t^p$ and $F_t^q$, respectively). We can then apply Fubini's Theorem and obtain:
\begin{align*}
&\int_{\mathbb C^n} \tr(A \alpha_z (UBU)) dV(z)\\
= &\int_{\mathbb C^n} \overline{\widetilde{y_2}(-w)} \int_{\mathbb C^n} \overline{\widetilde{y_1}(v)} \int_{\mathbb C^n} x_1(w+z) x_2(z-v) k_{-z}^t(w) k_z^t(v) dV(z) d\mu_t(w) d\mu_t(v).
\end{align*}
Since $x_1, x_2$ are polynomials in $z_1, \dots, z_n$, they (and their product) are in $F_t^2$ as well and it holds
\begin{align*}
\int_{\mathbb C^n} &x_1(w+z) x_2(z-v) k_{-z}^t(w) k_z^t(v) dV(z)\\
 &= (\pi t)^n \int_{\mathbb C^n} x_1(w+z) x_2(z-v) e^{\frac{(v-w) \cdot \overline{z}}{t}} d\mu_t(z)\\
&= (\pi t)^n \langle x_1(w + \cdot) x_2(\cdot - v), K_{v-w}^t\rangle\\
&= (\pi t)^n x_1(v) x_2(-w).
\end{align*}
We therefore get
\begin{align*}
\int_{\mathbb C^n} &\tr(A \alpha_z(UBU)) dV(z)\\
 &= (\pi t)^n\int_{\mathbb C^n} x_1(v) \overline{\widetilde{y_1}(v)}d\mu_t(v) \int_{\mathbb C^n} x_2(-w) \overline{\widetilde{y_2}(-w)}d\mu_t(w)\\
&= (\pi t)^n \tr (y_1 \otimes x_1) \tr(y_2 \otimes x_2).
\end{align*}
Letting $x = x_1 = \widetilde{y_2}$, $y = x_2 = \widetilde{y_1}$ (which is possible, since we still assume that they are polynomials) we have
\begin{align*}
\int_{\mathbb C^n} |\langle y, W_{z}Ux\rangle|^2 dV(z) = (\pi t)^n |\tr( y \otimes x)|^2  
\end{align*}
and hence it holds $\langle y , W_z U x\rangle \in L^2(\mathbb C^n)$ as a function of $z$ with
\[ \| \langle y, W_z Ux\rangle \|_{L^2} \leq (\pi t)^{n/2} |\tr (y \otimes x)| \leq (\pi t)^{n/2} \| y\|_{(F_t^p)'} \| x\|_{F_t^p}. \]
Therefore,
\[ \tr(A \alpha_z(UBU)) = \langle W_{-z}^\ast U^\ast y_2, x_1\rangle \langle y_1, W_z Ux_2\rangle \in L^1(\mathbb C^n) \]
(understood as a function of $z$) and H\"{o}lder's inequality yields the estimate
\[ \| \tr(A \alpha_z (UBU))\|_{L^1} \leq C \| y_1\|_{(F_t^p)'} \| y_2\|_{(F_t^p)'} \| x_1\|_{F_t^p} \| x_2\|_{F_t^p} \]
for some constant $C$ depending only on $n, t$ and $p$. 
Now, let $x_j \in F_t^p$, $y_j \in (F_t^p)'$ be arbitrary. Let $(x_j^m)_m$, $(\widetilde{y_j^m})_m$ be sequences of polynomials converging to $x_j$ and $\widetilde{y_j}$ in $F_t^p$ and $F_t^q$, respectively. Then,
\begin{align*}
\tr(A\alpha_z(UBU)) &= \langle W_{-z}^\ast U^\ast y_2, x_1\rangle \langle y_1, W_z Ux_2\rangle\\
&= \lim_{m \to \infty} \langle W_{-z}^\ast U^\ast y_2^m, x_1^m\rangle \langle y_1^m, W_z Ux_2^m\rangle.
\end{align*}
By Fatou's Lemma we get
\[ \| \tr(A\alpha_z(UBU))\|_{L^1} \leq C \| y_1\|_{(F_t^p)'} \| y_2\|_{(F_t^p)'} \| x_1\|_{F_t^p} \| x_2\|_{F_t^p}. \]
Now, taking the infimum over all possible representations (\ref{nuclear_operator}) gives
\[ \| \tr(A \alpha_z(UBU))\|_{L^1} \leq C \| A\|_{\mathcal N} \| B\|_{\mathcal N} \]
for arbitrary rank one operators. Having this estimate, it is easy to derive Equation (\ref{equation_trace}) for arbitrary rank one operators. Finally, it is standard to generalize (\ref{trace_estimate}) and (\ref{equation_trace}) from rank one operators to arbitrary nuclear operators.
\end{proof}

Combining the last few results, we see that we obtain (by linear extension) a convolution
\[ \ast: \mathcal A^1 \times \mathcal A^1 \to \mathcal A^1. \]

\begin{lem}\label{properties_convolution}
The convolution is commutative and associative. Further, for $f_1, f_2 \in L^1(\mathbb C^n)$, $A_1, A_2 \in \mathcal N(F_t^p)$ we have
\begin{align*}
\alpha_z (f_1 \ast f_2) &= \alpha_z(f_1) \ast f_2,\\
\alpha_z (f_1 \ast A_2) &= \alpha_z(f_1) \ast A_2 = f_1 \ast \alpha_z(A_2),\\
\alpha_z(A_1 \ast A_2) &= \alpha_z(A_1) \ast A_2.
\end{align*}
and there is a constant $C>0$ (depending on $n$, $p$ and $t$) such that
\[ \| (f_1 \oplus A_1) \ast (f_2 \oplus A_2)\|_{\mathcal A^1} \leq C \| f_1 \oplus A_1\|_{\mathcal A^1} \| f_2 \oplus A_2\|_{\mathcal A^1}. \]
\end{lem}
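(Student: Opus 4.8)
The plan is to establish each of the listed properties for the four building-block cases (function-function, function-operator in both orders, operator-operator), and then extend everything to $\mathcal A^1 = L^1(\mathbb C^n) \oplus \mathcal N(F_t^p)$ by bilinearity, observing that the convolution of two elements $f_1 \oplus A_1$ and $f_2 \oplus A_2$ is by definition the sum $(f_1 \ast f_2 + A_1 \ast A_2) \oplus (f_1 \ast A_2 + A_1 \ast f_2)$ of the respective pieces living in $L^1(\mathbb C^n)$ and $\mathcal N(F_t^p)$. Since all four bilinear pieces are already known to be bounded (the estimates $\| f \ast A\|_{\mathcal N} \le \| f\|_{L^1}\| A\|_{\mathcal N}$, $\| A \ast B\|_{L^1} \le C\| A\|_{\mathcal N}\| B\|_{\mathcal N}$, and classical Young for $f_1 \ast f_2$), the final norm inequality follows at once with $C$ the maximum of the four constants, after bounding each summand by its corresponding factor of $\| f_i\|$ or $\| A_i\|$ and then by the max-norm $\| f_i \oplus A_i\|_{\mathcal A^1}$.

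For the covariance identities I would argue case by case, always at a fixed $z$. The relation $\alpha_z(f_1 \ast f_2) = \alpha_z(f_1) \ast f_2$ is the elementary change-of-variables statement for ordinary convolution. For $\alpha_z(f_1 \ast A_2) = \alpha_z(f_1) \ast A_2 = f_1 \ast \alpha_z(A_2)$ I would start from $f_1 \ast A_2 = \int f_1(w)\alpha_w(A_2)\, dV(w)$; applying the bounded operator $\alpha_z$ (which commutes with the Bochner integral, since it is linear and isometric on $\mathcal N(F_t^p)$) gives $\int f_1(w)\alpha_{z+w}(A_2)\,dV(w)$ using the group law \eqref{identity_group_action}. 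Substituting $w \mapsto w - z$ turns this into $\int f_1(w-z)\alpha_w(A_2)\,dV(w) = \alpha_z(f_1) \ast A_2$; alternatively, writing $\alpha_{z+w}(A_2) = \alpha_w(\alpha_z(A_2))$ under the integral gives $f_1 \ast \alpha_z(A_2)$ directly. For $\alpha_z(A_1 \ast A_2) = \alpha_z(A_1) \ast A_2$ I would unwind the definition: $(\alpha_z(A_1) \ast A_2)(w) = \tr(\alpha_z(A_1)\, \alpha_w(UA_2U))$, then use $\alpha_z(A_1) = W_z A_1 W_{-z}$, the trace property $\tr(W_z X W_{-z}) = \tr(X)$ (valid since $W_z$ is invertible with $W_z^{-1} = W_{-z}$ and the nuclear trace is similarity-invariant), and the group law to rewrite this as $\tr(A_1\, \alpha_{w-z}(UA_2U)) = (A_1 \ast A_2)(w-z) = \alpha_z(A_1 \ast A_2)(w)$.

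Commutativity and associativity are the substantive points. Commutativity of $f_1 \ast f_2$ is classical; commutativity $f \ast A = A \ast f$ holds by definition of the latter symbol. For $A \ast B = B \ast A$ I would reduce to rank-one operators by bilinearity and density, and use the explicit formula for $\tr(A\,\alpha_z(UBU))$ derived in the previous lemma together with the unitary-type identities for $W_z$ and $U$ (in particular $W_z^\ast = W_{-z}$ as a map on the dual, $U^\ast = U$, and $UW_zU = W_{-z}$) to see that interchanging the roles of $A$ and $B$ and simultaneously replacing $z$ by $-z$ leaves the trace expression invariant — but since we only need equality of the two $L^1$-functions pointwise, a cleaner route is to verify $\tr(A\,\alpha_z(UBU)) = \tr(B\,\alpha_z(UAU))$ directly from $\tr(XY) = \tr(YX)$ after conjugating by the appropriate Weyl operator. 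Associativity is the main obstacle: one must show $(X \ast Y) \ast Z = X \ast (Y \ast Z)$ for all combinations of functions and operators in $\mathcal A^1$. I expect this to split into a handful of subcases (function-function-function is classical Fubini; function-function-operator, function-operator-operator, operator-operator-operator each require their own computation), and the genuinely new subcases will again be reduced to rank-one operators, where the already-established explicit trace formulas plus Fubini's theorem — justified by the integrability bounds \eqref{trace_estimate} and $\| f \ast A\|_{\mathcal N} \le \| f\|_{L^1}\| A\|_{\mathcal N}$ — let one rearrange the iterated integrals. The care needed is entirely in tracking the Weyl cocycle phases from \eqref{identity_weyl_operators}, which must cancel; I would check that cancellation in the operator-operator-operator case first, as it is the most delicate, and then note the mixed cases follow by the same bookkeeping or by approximating $L^1$-functions suitably.
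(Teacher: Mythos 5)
Your outline is sound, and for the norm estimate, the covariance identities, and commutativity it matches what the paper (tacitly) does. The divergence is in the associativity argument, which is the only part the paper actually writes out, and it is a genuinely different route. You propose reducing to rank-one operators and manipulating the explicit trace integrals via Fubini, keeping track of the Weyl cocycle phases from \eqref{identity_weyl_operators}. The paper instead exploits the duality $\mathcal L(F_t^p) \cong \mathcal N(F_t^p)'$: to prove $(A \ast B)\ast C = A \ast (B \ast C)$ it suffices to show $\tr(N((A\ast B)\ast C)) = \tr(N(A\ast(B\ast C)))$ for all $N \in \mathcal N(F_t^p)$, and the trace identity \eqref{equation_trace} (run backwards, $\tr(X)\tr(Y) = (\pi t)^{-n}\int\tr(XW_wUYUW_{-w})\,dV(w)$) unfolds each side into a double integral of a single trace, which is then rearranged by cyclicity and Fubini (justified by \eqref{trace_estimate}). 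This avoids any reduction to rank-one operators, and no cocycle phase ever needs to be tracked: every Weyl factor appears only inside a conjugation $W_wW_z(\cdot)W_{-z}W_{-w}$, so phases cancel automatically once one notes $\alpha_z\circ\alpha_w = \alpha_w\circ\alpha_z$. Your approach would likely go through with sufficient bookkeeping (you are right that the phases must cancel, and your commutativity computation using $UW_zU = W_{-z}$ is correct), but note that $(A\ast B)\ast C = A\ast(B\ast C)$ is an identity between nuclear operators, so you would still need some mechanism to upgrade an identity of kernels or trace pairings to operator equality; the paper's duality pairing is precisely that mechanism and buys a substantially shorter and less error-prone calculation.
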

\begin{proof}
The proof of commutativity, associativity and the three identities carries over from the Hilbert space proof \cite{Luef_Skrettingland2018} and follows from properties of the trace functional and the Bochner integral. Here, we present only the associativity for the convolution of three operators - this is the most difficult statement of all those listed above and will be a key fact later on. The computations presented here are essentially the same as in the proof of \cite[Proposition 4.4]{Luef_Skrettingland2018}.

Recall that $\mathcal L(F_t^p) \cong \mathcal N(F_t^p)'$ by identifying each operator $D \in \mathcal L(F_t^p)$ with the linear functional $\phi_D(N) = \tr(ND)$. In particular, two operators $D_1, D_2 \in \mathcal L(F_t^p)$ are identical if and only if $\tr(ND_1) = \tr(ND_2)$ for every $N \in \mathcal N(F_t^p)$.

Let $A, B, C, N \in \mathcal N(F_t^p)$. Then, $(A \ast B) \ast C,~ A \ast (B \ast C) \in \mathcal N(F_t^p)$. We compare $\tr(N((A \ast B) \ast C))$ with $\tr(N(A \ast (B \ast C)))$:
\begin{align*}
\operatorname{Tr}(N ((B \ast C) \ast A)) = &\operatorname{Tr}(N((C \ast B) \ast A)
\\
= &\operatorname{Tr}\left ( N \int _{\mathbb{C}^{n}}
\operatorname{Tr}(CW_{z} U BU W_{-z}) W_{z} A W_{-z} ~dV(z) \right )
\\
= &\int _{\mathbb{C}^{n}} \operatorname{Tr}(W_{z} A W_{-z} N)
\operatorname{Tr}(CW_{z} U B U W_{-z}) ~dV(z)
\end{align*}
Since $UU = \operatorname{Id}$ we have $\tr(CW_z U BU W_{-z}) = \tr(UCW_z U B U W_{-z} U)$. Applying Equation \eqref{equation_trace} yields
\begin{align*}
= &\frac{1}{(\pi t)^{n}} \int _{\mathbb{C}^{n}} \int _{\mathbb{C}^{n}}
\operatorname{Tr}(W_{z} A W_{-z} N W_{w} C W_{z} U B U W_{-z} W_{-w} )~dV(w)
~dV(z)
\\
= &\frac{1}{(\pi t)^{n}} \int _{\mathbb{C}^{n}} \int _{\mathbb{C}^{n}}
\operatorname{Tr}(W_{z} A W_{-z} N W_{w} C W_{-w} W_{w} W_{z} U B U W_{-z}
W_{-w} )~dV(w) ~dV(z)
\end{align*}
Since $\alpha_z(\alpha_w(UBU)) = \alpha_w(\alpha_z(UBU))$:
\begin{align*}
= &\frac{1}{(\pi t)^{n}} \int _{\mathbb{C}^{n}} \int _{\mathbb{C}^{n}}
\operatorname{Tr}(W_{z} A W_{-z} N W_{w} C W_{-w} W_{z} W_{w} U B U W_{-w}
W_{-z} )~dV(w) ~dV(z)
\\
=&\frac{1}{(\pi t)^{n}} \int _{\mathbb{C}^{n}} \int _{\mathbb{C}^{n}}
\operatorname{Tr}(N W_{w} C W_{-w} W_{z} W_{w} U B U W_{-w} W_{-z} W_{z}
A W_{-z})~dV(w) ~dV(z)
\\
=&\frac{1}{(\pi t)^{n}} \int _{\mathbb{C}^{n}} \int _{\mathbb{C}^{n}}
\operatorname{Tr}(N W_{w} C W_{-w} W_{z} W_{w} U B U W_{-w} A W_{-z})~dV(w)
~dV(z)
\end{align*}
Using Estimate \eqref{trace_estimate} one can show that Fubini's Theorem applies here, which gives, together with another application of Equation \eqref{equation_trace}:
\begin{align*}
= &\frac{1}{(\pi t)^{n}} \int _{\mathbb{C}^{n}} \int _{\mathbb{C}^{n}}
\operatorname{Tr}(N W_{w} C W_{-w} W_{z} W_{w} U B U W_{-w} A W_{-z})~dV(z)
~dV(w)
\\
= &\int _{\mathbb{C}^{n}} \operatorname{Tr}(NW_{w} CW_{-w})
\operatorname{Tr}(W_{w} U B U W_{-w} A)~dV(w)
\\
= &\operatorname{Tr}(N ((A \ast B)\ast C))
\end{align*}

\end{proof}
We will denote by
$\langle f, g\rangle_{\text{tr}},~\langle A, B\rangle_{\text{tr}}$
the duality pairing induced by the trace maps, i.e. for $f \in L^1(\mathbb C^n),~ g \in L^\infty(\mathbb C^n)$ and $A \in \mathcal N(F_t^p),~ B \in \mathcal L(F_t^p)$ we have
\[ \langle f, g\rangle_{\text{tr}} = \int_{\mathbb C^n} f(z) g(z) dV(z), \quad \langle A, B\rangle_{\text{tr}} = \tr(AB). \]
Our next goal will be to extend the convolution such that we can convolve elements from $\mathcal A^1$ with elements from $\mathcal A^\infty$. The following identities will be useful for this:
\begin{lem}\label{Lemma_adjoint_maps}
Let $f \in L^1(\mathbb C^n)$ and $A_1, A_2 \in \mathcal N(F_t^p)$. Then, we have
\begin{align*}
\langle f \ast A_1, B\rangle_{\emph{tr}} &= \langle f, B \ast (UA_1U)\rangle_{\emph{tr}}, \quad B \in \mathcal N(F_t^p),\\
\langle f \ast A_2, B\rangle_{\emph{tr}} &= \langle A_2, (Uf) \ast B\rangle_{\emph{tr}}, \quad B \in \mathcal N(F_t^p),\\
\langle A_1 \ast A_2, g\rangle_{\emph{tr}} &= \langle A_1, g \ast (UA_2 U)\rangle_{\emph{tr}}, \quad g \in L^1(\mathbb C^n).
\end{align*}
\end{lem}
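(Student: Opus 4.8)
The plan is to prove all three identities by simply unwinding the definitions and moving a bounded linear functional through a Bochner integral; no new idea beyond the definitions, cyclicity of the trace, and the relations $U^2=\mathrm{Id}$, $W_z^{-1}=W_{-z}$, $\alpha_z(A)=W_zAW_{-z}$ is needed. The two elementary facts I will use repeatedly are: (a) for $B\in\mathcal L(F_t^p)$ the map $N\mapsto\tr(NB)$ is a bounded linear functional on $\mathcal N(F_t^p)$, hence commutes with Bochner integrals valued in $\mathcal N(F_t^p)$, and similarly $g\mapsto\int_{\mathbb C^n}fg\,dV$ is bounded on $L^1(\mathbb C^n)$; and (b) all the integrands below are $L^1$-functions of $z$ by the norm estimates already established (in particular $\|\alpha_z(A)\|_{\mathcal N}=\|A\|_{\mathcal N}$ and $\|f\ast A\|_{\mathcal N}\le\|f\|_{L^1}\|A\|_{\mathcal N}$).

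For the first identity, I would start from the Bochner-integral definition $f\ast A_1=\int_{\mathbb C^n}f(z)\alpha_z(A_1)\,dV(z)$ and pull $\tr(\cdot\,B)$ inside:
\[
\langle f\ast A_1,B\rangle_{\mathrm{tr}}=\int_{\mathbb C^n}f(z)\,\tr\bigl(\alpha_z(A_1)B\bigr)\,dV(z).
\]
Then, using cyclicity of the trace and $U^2=\mathrm{Id}$, one checks $\tr(\alpha_z(A_1)B)=\tr\bigl(B\,\alpha_z(U(UA_1U)U)\bigr)=(B\ast UA_1U)(z)$ directly from the definition of the operator convolution, so the right-hand side is $\langle f,B\ast UA_1U\rangle_{\mathrm{tr}}$.

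The second identity follows along the same lines: the same first step gives $\langle f\ast A_2,B\rangle_{\mathrm{tr}}=\int_{\mathbb C^n} f(z)\tr(\alpha_z(A_2)B)\,dV(z)$, and writing $\alpha_z(A_2)=W_zA_2W_{-z}$ and using cyclicity together with $W_{-z}=W_z^{-1}$ yields $\tr(\alpha_z(A_2)B)=\tr\bigl(A_2\,\alpha_{-z}(B)\bigr)$. After the substitution $z\mapsto-z$ (under which Lebesgue measure is invariant) and recalling $Uf(z)=f(-z)$, the integral becomes $\int_{\mathbb C^n}(Uf)(z)\,\tr(A_2\,\alpha_z(B))\,dV(z)$, and pulling $\tr(A_2\,\cdot)$ back inside the Bochner integral defining $(Uf)\ast B$ gives $\langle A_2,(Uf)\ast B\rangle_{\mathrm{tr}}$. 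The third identity is the most immediate: by definition $A_1\ast A_2$ is the function $z\mapsto\tr(A_1\,\alpha_z(UA_2U))$, so
\begin{align*}
\langle A_1\ast A_2,g\rangle_{\mathrm{tr}}
&=\int_{\mathbb C^n}g(z)\,\tr\bigl(A_1\,\alpha_z(UA_2U)\bigr)\,dV(z)\\
&=\tr\Bigl(A_1\int_{\mathbb C^n}g(z)\,\alpha_z(UA_2U)\,dV(z)\Bigr)
=\langle A_1,g\ast(UA_2U)\rangle_{\mathrm{tr}},
\end{align*}
moving $\tr(A_1\,\cdot)$ through the Bochner integral once more.

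There is no real obstacle here; the computation is entirely mechanical once the definitions are in place. The only points deserving (routine) care are the justification for interchanging the trace functional with the Bochner integrals — covered by (a) and (b) above — and the bookkeeping of the reflection $U$ and the sign of $z$, which is precisely where the operators $UA_iU$ and the flip $Uf$ enter. These identities are exactly the relations expressing that convolving with a fixed element of $\mathcal A^1$ is, up to the appropriate reflection, the pre-adjoint of another such convolution, and they are what will let us define the convolution $\mathcal A^1\times\mathcal A^\infty\to\mathcal A^\infty$ by duality.
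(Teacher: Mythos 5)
Your proof is correct and is exactly the computation the paper alludes to when it says the lemma ``follows again from simple properties of the trace map and the Bochner integral''; you have simply filled in the mechanical details (pulling $\tr(\cdot\, B)$ through the Bochner integral, cyclicity of the trace, $U^2=\mathrm{Id}$, and the change of variables $z\mapsto -z$), all of which are valid here.
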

\begin{proof}
Follows again from simple properties of the trace map and the Bochner integral.
\end{proof}
We now want to extend the convolution $\ast: \mathcal A^1 \times \mathcal A^1 \to \mathcal A^1$ to a larger class, at least in the second factor. In the end, we will obtain a convolution $\ast: \mathcal A^1 \times \mathcal A^\infty \to \mathcal A^\infty$. For $f_1 \in L^1(\mathbb C^n)$, $A_1 \in \mathcal N(F_t^p)$ and $A_2 \in \mathcal L(F_t^p)$, we could still define the convolutions as
\begin{align*}
f_1 \ast A_2 &= \int_{\mathbb C^n} f_1(z) W_z A_2 W_{-z} dV(z), \\
A_1 \ast A_2(z) &= \tr( A_1 \alpha_z(U A_2 U)).
\end{align*}
These are actually equivalent to the definitions that we will give below (the integral above has to be understood as a weak$^\ast$ integral), but for $A_1 \ast f_2$ ($f_2 \in L^\infty(\mathbb C^n), ~A_1 \in \mathcal N(F_t^p)$) the analogous definition of $A_1 \ast f_2$ makes no sense right away. The problem can be solved by defining $A_1 \ast f_2$ via duality $\mathcal L(F_t^p) \cong (\mathcal N(F_t^p))'$. To have a unified approach, we define the convolution in all three cases through duality:
\begin{defn}
Let $f_1 \in L^1(\mathbb C^n), ~f_2 \in L^\infty(\mathbb C^n)$, $A_1 \in \mathcal N(F_t^p)$ and $A_2 \in \mathcal L(F_t^p)$. We define the convolutions $f_1 \ast A_2 \in \mathcal L(F_t^p)$, $f_2 \ast A_1 \in \mathcal L(F_t^p)$ and $A_1 \ast A_2 \in L^\infty(\mathbb C^n)$ through the following duality relations:
\begin{align*}
\langle f_1 \ast A_2, B\rangle_{\text{tr}} &= \langle A_2, Uf_1 \ast B\rangle_{\text{tr}} \quad &\text{ for all } B \in \mathcal N(F_t^p)\\
\langle f_2 \ast A_1, B\rangle_{\text{tr}} &= \langle f_2, B \ast (UA_1 U)\rangle_{\text{tr}} \quad &\text{ for all } B \in \mathcal N(F_t^p)\\
\langle A_1 \ast A_2, g\rangle_{\text{tr}} &= \langle A_2, g \ast (UA_1U)\rangle_{\text{tr}} \quad &\text{ for all } g \in L^1(\mathbb C^n)
\end{align*}
We extend this convolution linearly to a map $\ast: \mathcal A^1 \times \mathcal A^\infty \to \mathcal A^\infty$. 
\end{defn}
\begin{lem}\label{lemma_convolution_ainfty}\begin{enumerate}[(1)]
\item For $(f_1 \oplus A_1) \in \mathcal A^1$ and $(f_2 \oplus A_2) \in \mathcal A^\infty$ it holds
\[ \| (f_1 \oplus A_1) \ast (f_2 \oplus A_2)\|_{\mathcal A^\infty} \leq C \| (f_1 \oplus A_1)\|_{\mathcal A^1} \| (f_2 \oplus A_2) \|_{\mathcal A^\infty} \]
for some constant $C > 0$ which depends only on $n, ~p$ and $t$.
\item The convolution $\ast: \mathcal A^1 \times \mathcal A^{\infty} \to \mathcal A^\infty$ is associative in the sense that for $(f_1 \oplus A_1), (f_2 \oplus A_2) \in \mathcal A^1$ and $(g \oplus B) \in \mathcal A^\infty$ we have
\[ [(f_1 \oplus A_1) \ast (f_2 \oplus A_2)] \ast (g \oplus B) = (f_1 \oplus A_1) \ast [(f_2 \oplus A_2) \ast (g \oplus B)]. \]
\item Let $(f_1 \oplus A_1) \in \mathcal A^1$ and $(f_2 \oplus A_2) \in \mathcal A^\infty$. Then, the following identities are valid:
\begin{align*}
\alpha_z (f_1 \ast f_2) = \alpha_z(f_1) &\ast f_2 = f_1 \ast \alpha_z(f_2),\\
\alpha_z (f_1 \ast A_2) = \alpha_z(f_1) &\ast A_2 = f_1 \ast \alpha_z(A_2),\\
\alpha_z (A_1 \ast f_2) = \alpha_z(A_1) &\ast f_2 = A_1 \ast \alpha_z(f_2),\\
\alpha_z(A_1 \ast A_2) = \alpha_z(A_1) &\ast A_2 = A_1 \ast \alpha_z(A_2).
\end{align*}
\end{enumerate}
\end{lem}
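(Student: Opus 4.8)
The plan is to use throughout the duality $(\mathcal A^1)'\cong\mathcal A^\infty$ given by the trace pairing: an element of $\mathcal A^\infty$ is determined by its values $\langle\,\cdot\,,W\rangle_{\text{tr}}$ on $W\in\mathcal A^1$, and the norms $\|\cdot\|_{op}$ and $\|\cdot\|_{L^\infty}$ arise as suprema of such values over the relevant unit balls (the $\max$-normalisation of the direct sums costing only a fixed constant). The three defining relations of the extended convolution can be packaged as one ``move across the pairing'' rule: writing $\check P$ for the $U$-conjugate of $P$ (so $\check f=Uf$ for functions and $\check A=UAU$ for operators, extended componentwise), one has
\[ \langle P\ast Q,W\rangle_{\text{tr}}=\langle Q,\check P\ast W\rangle_{\text{tr}} \quad (P,W\in\mathcal A^1,\, Q\in\mathcal A^\infty); \]
verifying this reduces, by bilinearity, to the eight combinations of pure function/operator components, of which half are trivially $0=0$ and the rest are the classical convolution identity and the three defining relations (these extend Lemma~\ref{Lemma_adjoint_maps}). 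Two elementary facts will be used repeatedly: the identity $UW_zU=W_{-z}$, a one-line computation from the definitions of $U$ and $W_z$, which propagates to $\check{}$ being an involution of $\mathcal A^1$ with $\check P\ast\check R$ equal to the $U$-conjugate of $P\ast R$, and to $U\alpha_z(A)U=\alpha_{-z}(UAU)$, $U\alpha_z(f)=\alpha_{-z}(Uf)$; and the fact that $\alpha_z$ moves across the trace pairing as $\langle\alpha_z(R),W\rangle_{\text{tr}}=\langle R,\alpha_{-z}(W)\rangle_{\text{tr}}$ (from $\tr(W_zRW_{-z}B)=\tr(R\,\alpha_{-z}(B))$ and its function analogue).

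For (1) I would estimate the four products componentwise. The bound $\|f_1\ast f_2\|_{L^\infty}\leq\|f_1\|_{L^1}\|f_2\|_{L^\infty}$ is classical; for the mixed and operator-operator products one writes the norm as a supremum over a unit ball, applies the move-across rule, and invokes the $\mathcal A^1\times\mathcal A^1$ estimates — thus $\|f_1\ast A_2\|_{op}=\sup_{\|B\|_{\mathcal N}\leq1}|\langle A_2,\check f_1\ast B\rangle_{\text{tr}}|\leq\|A_2\|_{op}\|f_1\|_{L^1}$, and similarly $\|A_1\ast f_2\|_{op}\leq C\|A_1\|_{\mathcal N}\|f_2\|_{L^\infty}$ via \eqref{trace_estimate} and $\|A_1\ast A_2\|_{L^\infty}\leq\|A_1\|_{\mathcal N}\|A_2\|_{op}$. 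Taking maxima gives the claim with $C$ the constant of \eqref{trace_estimate}.

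Part (2) is the heart of the lemma. Fix $X,Y\in\mathcal A^1$ and $Z\in\mathcal A^\infty$; both triple products lie in $\mathcal A^\infty$, so it suffices to show $\langle(X\ast Y)\ast Z,W\rangle_{\text{tr}}=\langle X\ast(Y\ast Z),W\rangle_{\text{tr}}$ for arbitrary $W\in\mathcal A^1$. On the left the move-across rule gives $\langle(X\ast Y)\ast Z,W\rangle_{\text{tr}}=\langle Z,(\check X\ast\check Y)\ast W\rangle_{\text{tr}}$, after rewriting the $U$-conjugate of $X\ast Y$ as $\check X\ast\check Y$. On the right, applying the move-across rule twice, $\langle X\ast(Y\ast Z),W\rangle_{\text{tr}}=\langle Y\ast Z,\check X\ast W\rangle_{\text{tr}}=\langle Z,\check Y\ast(\check X\ast W)\rangle_{\text{tr}}$, which equals $\langle Z,(\check Y\ast\check X)\ast W\rangle_{\text{tr}}$ by associativity inside $\mathcal A^1$ (Lemma~\ref{properties_convolution}). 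Since $\check X\ast\check Y=\check Y\ast\check X$ by commutativity inside $\mathcal A^1$, the two expressions coincide, and as $W$ was arbitrary the identity follows. The genuine work here — and the only real obstacle — is organisational rather than analytic: verifying the move-across rule in every component combination and keeping track of the $U$-conjugations, the sole non-formal ingredient being $UW_zU=W_{-z}$ entering via the homomorphism property of $\check{}$.

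For (3) I would proceed similarly. Testing $\alpha_z(f_1\ast A_2)$ against $B\in\mathcal N(F_t^p)$ and unfolding,
\[ \langle\alpha_z(f_1\ast A_2),B\rangle_{\text{tr}}=\langle f_1\ast A_2,\alpha_{-z}(B)\rangle_{\text{tr}}=\langle A_2,\check f_1\ast\alpha_{-z}(B)\rangle_{\text{tr}}, \]
and $\check f_1\ast\alpha_{-z}(B)$ can be rewritten either as $\alpha_{-z}(\check f_1\ast B)$ or as $\alpha_{-z}(\check f_1)\ast B=U(\alpha_z f_1)\ast B$ by the covariance identities of Lemma~\ref{properties_convolution} together with $\alpha_{-z}(Uf_1)=U(\alpha_z f_1)$; moving $\alpha_z$ and the convolution back across the pairing then gives $\alpha_z(f_1\ast A_2)=f_1\ast\alpha_z(A_2)$ and $\alpha_z(f_1\ast A_2)=\alpha_z(f_1)\ast A_2$ respectively. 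The remaining three lines of (3) follow by the identical argument, using $U\alpha_z(A)U=\alpha_{-z}(UAU)$ wherever an operator is conjugated by $U$. In short, the analytic content is confined to part (2), and even there it is organisational; the rest is a transcription of the $\mathcal A^1$-theory through the duality $(\mathcal A^1)'\cong\mathcal A^\infty$.
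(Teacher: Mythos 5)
Your proposal is correct and takes essentially the same approach as the paper: reduce to the $\mathcal A^1$-level identities by moving factors across the trace pairing and using that $U$-conjugation is a $\ast$-homomorphism of $\mathcal A^1$. The paper writes out only the single component $A\ast(B\ast C)=(A\ast B)\ast C$ for $A,B\in\mathcal N(F_t^p)$, $C\in\mathcal L(F_t^p)$ and asserts the rest ``follows immediately from the duality relations''; you package the three defining duality relations into one ``move-across'' rule and run the same argument in full generality, which is a tidy organizational gain but not a genuinely different route.
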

\begin{proof}
The result follows immediately from the duality relations, which define the convolution, and corresponding relations for the convolution between $\mathcal A^1$ objects. We prove only one particular associativity statement, since this will be important later.

Let $A, B \in \mathcal N(F_t^p)$ and $C \in \mathcal L(F_t^p)$. It is no problem to verify the identity
\begin{equation*}
(UAU) \ast (UBU) = U(A \ast B).
\end{equation*}
In particular, for any $N \in \mathcal N(F_t^p)$ we obtain
\begin{equation*}
(N \ast (UAU)) \ast (UBU) = N \ast ((UAU) \ast (UBU)) = N \ast (U(A \ast B)).
\end{equation*}
Hence,
\begin{align*}
\langle A \ast (B \ast C), N\rangle_{\text{tr}} &= \langle B \ast C, N \ast (UAU)\rangle_{\text{tr}}\\
&= \langle C, (N \ast (UAU)) \ast (UBU) \rangle_{\text{tr}}\\
&= \langle C, N \ast (U(A \ast B))\rangle_{\text{tr}}\\
&= \langle (A \ast B) \ast C, N\rangle_{\text{tr}}
\end{align*}
\end{proof}
Since $\mathcal A^{p_0}$ was set up such that it is an interpolation space between $\mathcal A^1$ and $\mathcal A^\infty$, we can define the convolution as a map $\mathcal A^1 \times \mathcal A^{p_0} \to \mathcal A^{p_0}$ through interpolation and all the properties of the previous lemma carry over with obvious modifications.

We have the following important observation:
\begin{lem}\label{Lemma_continuity}
Let $(f_1 \oplus A_1) \in \mathcal A^1$, $(f_2 \oplus A_2) \in \mathcal A^\infty$. Then, we have
\[ (f_1 \oplus A_1) \ast (f_2 \oplus A_2) \in \mathcal C, \]
i.e. $f_1 \ast f_2, ~A_1 \ast A_2 \in \BUC(\mathbb C^n)$ and $f_1 \ast A_2, ~ f_2 \ast A_1 \in \mathcal C_1$.
\end{lem}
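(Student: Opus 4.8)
The plan is to prove the four assertions by reducing everything to the single fact that $\alpha$-orbits of elements in $\mathcal A^1$ are norm-continuous (Lemma \ref{lemma_continuity_shifts}(1)), together with the covariance identities of Lemma \ref{lemma_convolution_ainfty}(3) and the norm bound of Lemma \ref{lemma_convolution_ainfty}(1). The point of $\mathcal C = \mathcal C_0 \oplus \mathcal C_1$ is precisely that membership is characterized by norm-continuity of $z \mapsto \alpha_z(\cdot)$ (in $\|\cdot\|_{L^\infty}$ for the symbol part, in $\|\cdot\|_{op}$ for the operator part), so it suffices to establish this continuity for each of the four convolution products.

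First I would treat $f_1 \ast f_2$ and $f_1 \ast A_2$ together, since both have a factor from $L^1(\mathbb C^n)$ on which the shift acts continuously. By the covariance identity $\alpha_z(f_1 \ast f_2) = \alpha_z(f_1) \ast f_2$ and $\alpha_z(f_1 \ast A_2) = \alpha_z(f_1) \ast A_2$, and by the norm estimate of Lemma \ref{lemma_convolution_ainfty}(1),
\begin{align*}
\| \alpha_z(f_1 \ast f_2) - f_1 \ast f_2 \|_{L^\infty} &\leq \| (\alpha_z(f_1) - f_1) \ast f_2 \|_{\mathcal A^\infty} \leq C \| \alpha_z(f_1) - f_1\|_{L^1} \| f_2\|_{L^\infty},\\
\| \alpha_z(f_1 \ast A_2) - f_1 \ast A_2 \|_{op} &\leq C \| \alpha_z(f_1) - f_1 \|_{L^1} \| A_2\|_{op},
\end{align*}
both of which tend to $0$ as $z \to 0$ by Lemma \ref{lemma_continuity_shifts}(1). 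Hence $f_1 \ast f_2 \in \mathcal C_0 = \BUC(\mathbb C^n)$ and $f_1 \ast A_2 \in \mathcal C_1$.

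For $A_1 \ast A_2$ and $f_2 \ast A_1$ I would run the symmetric argument using the other covariance identity, namely $\alpha_z(A_1 \ast A_2) = \alpha_z(A_1) \ast A_2$ and $\alpha_z(A_1 \ast f_2) = \alpha_z(A_1) \ast f_2$, together with $\|\alpha_z(A_1) - A_1\|_{\mathcal N} \to 0$ from Lemma \ref{lemma_continuity_shifts}(1) applied to the nuclear component. This gives
\begin{align*}
\| \alpha_z(A_1 \ast A_2) - A_1 \ast A_2\|_{L^\infty} &\leq C \| \alpha_z(A_1) - A_1\|_{\mathcal N} \| A_2\|_{op} \to 0,\\
\| \alpha_z(f_2 \ast A_1) - f_2 \ast A_1\|_{op} &\leq C \| \alpha_z(A_1) - A_1\|_{\mathcal N} \| f_2\|_{L^\infty} \to 0,
\end{align*}
so $A_1 \ast A_2 \in \BUC(\mathbb C^n)$ and $f_2 \ast A_1 \in \mathcal C_1$. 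By linearity and continuity of $\ast$ this extends from the pure tensors $f_1 \oplus 0$, $0 \oplus A_1$ to all of $\mathcal A^1$, completing the proof.

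I do not expect a genuine obstacle here: the statement is essentially a formal consequence of the covariance relations and the $\mathcal A^1$-side continuity, and the only thing to be slightly careful about is that one uses the continuity of $z \mapsto \alpha_z$ on the $\mathcal A^1$ factor (where it holds in norm) rather than on the $\mathcal A^\infty$ factor (where it need not), which is exactly why the convolution regularizes. One should also note that $A_1 \ast A_2$ is \emph{bounded} as a function — this follows from Lemma \ref{lemma_convolution_ainfty}(1), which already places it in $L^\infty(\mathbb C^n)$ — before invoking the identification $\mathcal C_0 = \BUC(\mathbb C^n)$; with boundedness in hand, uniform continuity is what the orbit-continuity estimate delivers.
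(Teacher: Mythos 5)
Your proof is correct and follows essentially the same approach as the paper: the paper's proof exhibits the $f_1 \ast A_2$ case using the covariance identity of Lemma \ref{lemma_convolution_ainfty}(3), the norm estimate of Lemma \ref{lemma_convolution_ainfty}(1), and norm-continuity of $z \mapsto \alpha_z$ on the $\mathcal A^1$ factor, then declares the remaining cases analogous — which is exactly what you carry out in full. The closing remark about extending by linearity from ``pure tensors'' is superfluous (the four products you establish already cover a general $(f_1 \oplus A_1) \ast (f_2 \oplus A_2)$), but it does no harm.
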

\begin{proof}
Using Lemma \ref{lemma_convolution_ainfty}(3) we have
\begin{align*}
\| \alpha_z(f_1 \ast A_2) - \alpha_w(f_1 \ast A_2)\|_{op} &= \| (\alpha_z(f_1) - \alpha_w(f_1)) \ast A_2\|_{op}\\
&\leq \| \alpha_z(f_1) - \alpha_w(f_1)\|_{L^1} \| A_2\|_{op}\\
&\to 0, \quad z \to w,
\end{align*}
where we used that $\alpha$ acts continuously on $L^1(\mathbb C^n)$. The other cases are proven analogously.
\end{proof}
Although the convolutions are now defined through duality, we might still use their old definitions in two of the three cases, as we already mentioned above. For simplicity, we discuss the case $f_1 \ast A_2$ for $f_1 \in L^1(\mathbb C^n)$ only if $A_2 \in \mathcal C_1$, which gives some extra information.
\begin{lem}\begin{enumerate}[1)]
\item Let $f_1 \in L^1(\mathbb C^n)$ and $A_2 \in \mathcal C_1$. Then, their convolution can be expressed as
\[ f_1 \ast A_2 = \int_{\mathbb C^n} f_1(z) \alpha_z(A_2) dV(z) \in \mathcal C_1. \]
\item For $A_1 \in \mathcal N(F_t^p)$ and $A_2 \in \mathcal L(F_t^p)$, we have
\[ A_1 \ast A_2(z) = \tr(A_1 \alpha_z(UA_2U)). \]
\end{enumerate}
\end{lem}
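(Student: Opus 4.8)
The plan is, in each of the two cases, to write down the claimed concrete expression for the convolution, check that it lies in the asserted space, and then verify that it satisfies the duality relation from the Definition which \emph{characterizes} the corresponding convolution; uniqueness of the representing element then finishes the argument.

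\textbf{Part 1).} Since $A_2 \in \mathcal C_1$, the map $z \mapsto \alpha_z(A_2)$ is $\|\cdot\|_{op}$-continuous, hence $z \mapsto f_1(z)\alpha_z(A_2)$ is continuous, separable-valued, and therefore strongly measurable into $\mathcal L(F_t^p)$ by the Pettis measurability theorem (just as in Section 2.3), and it is Bochner integrable because $\int_{\mathbb C^n} |f_1(z)|\,\|\alpha_z(A_2)\|_{op}\,dV(z) \le \|f_1\|_{L^1}\|A_2\|_{op}<\infty$. As $\mathcal C_1$ is a closed subspace of $\mathcal L(F_t^p)$, the Bochner integral $T := \int_{\mathbb C^n} f_1(z)\alpha_z(A_2)\,dV(z)$ again lies in $\mathcal C_1$. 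To identify $T$ with $f_1 \ast A_2$, I would fix $B \in \mathcal N(F_t^p)$ and use that $D \mapsto \tr(DB)$ is a bounded functional on $\mathcal L(F_t^p)$, so it commutes with the Bochner integral; cyclicity of the trace (one factor being nuclear) and the substitution $z \mapsto -z$ then give
\[ \tr(TB) = \int_{\mathbb C^n} f_1(z)\,\tr(\alpha_z(A_2)B)\,dV(z) = \int_{\mathbb C^n} f_1(z)\,\tr\bigl(A_2\,\alpha_{-z}(B)\bigr)\,dV(z) = \tr\bigl(A_2\,((Uf_1)\ast B)\bigr), \]
which is $\langle A_2, Uf_1\ast B\rangle_{\text{tr}}$. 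By the defining relation for $f_1 \ast A_2$ and nondegeneracy of the pairing $\mathcal L(F_t^p)\cong \mathcal N(F_t^p)'$, this yields $T = f_1 \ast A_2$.

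\textbf{Part 2).} Set $\psi(z) := \tr(A_1\,\alpha_z(UA_2U))$. From $\|\alpha_z(UA_2U)\|_{op} = \|A_2\|_{op}$ one gets $|\psi(z)| \le \|A_1\|_{\mathcal N}\|A_2\|_{op}$, and expanding $A_1$ in a nuclear series together with strong continuity of $z\mapsto\alpha_z(UA_2U)$ shows $\psi$ is continuous, in particular $\psi \in L^\infty(\mathbb C^n)$. To show $\psi = A_1 \ast A_2$ I would test against $g \in L^1(\mathbb C^n)$: since $g\ast(UA_1U) = \int_{\mathbb C^n} g(w)\alpha_w(UA_1U)\,dV(w) \in \mathcal N(F_t^p)$, boundedness of the trace pairing gives $\langle A_2, g\ast(UA_1U)\rangle_{\text{tr}} = \int_{\mathbb C^n} g(w)\,\tr(A_2\,\alpha_w(UA_1U))\,dV(w)$. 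The one nontrivial ingredient is the symmetry identity
\[ \tr(A_1\,\alpha_z(UA_2U)) = \tr(A_2\,\alpha_z(UA_1U)), \qquad z \in \mathbb C^n, \]
which I would derive by conjugating $A_1 W_z U A_2 U W_{-z}$ by $U$, inserting factors $U^2 = \operatorname{Id}$, invoking the elementary identity $UW_zU = W_{-z}$ (a one-line check from the explicit formulas for $U$ and $W_z$), and then using cyclicity of the trace. Granting it, $\int_{\mathbb C^n}\psi(z)g(z)\,dV(z) = \langle A_2, g\ast(UA_1U)\rangle_{\text{tr}} = \langle A_1\ast A_2, g\rangle_{\text{tr}}$ for all $g \in L^1(\mathbb C^n)$, and since $L^\infty(\mathbb C^n) = (L^1(\mathbb C^n))'$ this forces $\psi = A_1 \ast A_2$.

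The hard part will be the symmetry identity in Part 2); everything else is routine bookkeeping with Bochner integrals, the duality relations defining the convolution, and the elementary identities for $\alpha_z$ and $U$ from Section 2.2. The only points needing mild care are that cyclicity of the trace is used solely in the form $\tr(ST) = \tr(TS)$ with at least one of $S, T$ nuclear — which holds throughout — and that interchanging the trace with the Bochner integrals is justified by boundedness of the relevant pairing functionals.
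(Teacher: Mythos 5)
Your argument is correct and is essentially the expanded version of the paper's one-line sketch: in both cases you plug the explicit expression into the duality relation defining the convolution and use that the trace pairing commutes with Bochner integrals, together with cyclicity of the trace and the identity $UW_zU = W_{-z}$; the observation that the integral in Part 1) converges already in the closed subspace $\mathcal C_1$ is likewise the paper's reason for the membership $f_1 \ast A_2 \in \mathcal C_1$. The symmetry identity $\tr(A_1\,\alpha_z(UA_2U)) = \tr(A_2\,\alpha_z(UA_1U))$ you flag as the hard step does go through exactly as you outline, so no gap remains.
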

\begin{proof}
Using properties of Bochner integrals and trace maps, one verifies that these objects satisfy the duality definition of the convolutions. In 1) $f_1 \ast A_2 \in \mathcal C_1$ follows, since the integral converges in $\mathcal C_1$ as a Bochner integral.
\end{proof}

\subsection{Toeplitz quantization, Berezin transform and convolution}
Recall that for $p = 2$, the orthogonal projection
\[ P_t: L^2(\mathbb C^n, \mu_t) \to F_t^2 \]
is given by
\[ P_t(f)(z) = \int_{\mathbb C^n} e^{\frac{z \cdot \overline{w}}{t}} f(w) d\mu_t(w). \]
As is well-known, the operator defined by the same integral expression defines a continuous projection
\begin{align*}
P_t: ~&L^p(\mathbb C^n, \mu_{2t/p}) \to F_t^p,\\
P_t(f)(z) &= \int_{\mathbb C^n} e^{\frac{z \cdot \overline w}{t}} f(w) d\mu_t(w)\\
&= \left( \frac{2}{p} \right)^n \int_{\mathbb C^n} e^{\frac{z\cdot \overline w}{t}}f(w) e^{(\frac{p}{2t} - \frac{1}{t})|w|^2} d\mu_{2t/p}(w),
\end{align*}
cf. \cite{Janson_Peetre_Rochberg, Zhu}. For $f \in L^\infty(\mathbb C^n)$ we will denote by $T_f^t$ (suppressing $p$ in the notation) the Toeplitz operator
\[ T_f^t: F_t^p \to F_t^p, \quad T_f^tg = P_t(fg). \]
It is easy to see that $T_f^t$ is bounded if $f \in L^\infty(\mathbb C^n)$. Further, for $f \in L^1(\mathbb C^n)$ we define $T_f^t$ by the same formula. Here, boundedness is not entirely trivial. For a suitable measurable function $f: \mathbb C^n \to \mathbb C$ we will define its Berezin transform at $t > 0$ through
\[ \widetilde{f}^{(t)}(z) := \langle f k_z^t, k_z^t\rangle_{F_t^2}, \]
if it exists. Further, for $A \in \mathcal L(F_t^p)$ we define the Berezin transform as
\[ \widetilde{A}(z) := \langle Ak_z^t, k_z^t\rangle_{F_t^2}. \]
For $f \in L^\infty(\mathbb C^n)$, one readily checks that
\[ \widetilde{T_f^t} = \widetilde{f}^{(t)}. \]
It is our next goal to express the maps $f \mapsto T_f^t$ and $A \mapsto \widetilde{A}$ using convolutions. For this, we consider the operator $P_{\mathbb C} = 1 \otimes 1 \in \mathcal N(F_t^p)$, i.e.
\[ P_{\mathbb C}f = f(0) \in F_t^p.\]
We will also need the following normalized version of $P_{\mathbb C}$:
\[ \mathcal R_t := \frac{1}{(\pi t)^n} P_{\mathbb C}. \]
\begin{lem}
Let $A \in \mathcal L(F_t^p)$. Then, we have
\[ \widetilde{A} = P_{\mathbb C} \ast A. \]
\end{lem}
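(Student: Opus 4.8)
The plan is to unwind both sides of the identity into an explicit pairing in $F_t^2$ and match them. For the right-hand side I would apply the formula $A_1 \ast A_2(z) = \tr(A_1\,\alpha_z(UA_2U))$ from the preceding lemma (valid for $A_1 \in \mathcal N(F_t^p)$, $A_2 \in \mathcal L(F_t^p)$), with $A_1 = P_{\mathbb C}$ and $A_2 = A$, obtaining
\[ P_{\mathbb C} \ast A(z) = \tr\bigl(P_{\mathbb C}\,\alpha_z(UAU)\bigr) = \tr\bigl(P_{\mathbb C}\,W_z U A U W_{-z}\bigr). \]
Next I would use that $P_{\mathbb C} = 1 \otimes 1$ is the rank-one operator $f \mapsto \langle f, 1\rangle_{F_t^2}\cdot 1 = f(0)\cdot 1$, together with the elementary identity $\tr\bigl((y\otimes x)\,B\bigr) = y(Bx)$ for a rank-one operator $y \otimes x$ and $B \in \mathcal L(F_t^p)$; taking $y = \langle\,\cdot\,,1\rangle_{F_t^2}$ and $x = 1$ this gives
\[ P_{\mathbb C} \ast A(z) = \bigl\langle W_z U A U W_{-z}\,1,\ 1\bigr\rangle_{F_t^2}. \]
A minor point to record here is that the nuclear trace genuinely reduces to the $F_t^2$-pairing on rank-one operators, which follows from the identification of $(F_t^p)'$ with $F_t^q$ used throughout (note $1 \in F_t^q$ for every $q$).

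It then remains to evaluate the last expression using the vacuum vector $1 = k_0^t$. I would first record $W_w\,1 = k_w^t$ (immediate from $W_w f(v) = k_w^t(v) f(v-w)$) and $U\,1 = 1$. Since the Banach-space adjoint of $W_z \in \mathcal L(F_t^p)$ is $W_{-z} \in \mathcal L(F_t^q)$, and $U$ is self-adjoint for the $F_t^2$-pairing (by reflection invariance of $\mu_t$, $U^\ast = U$), I can move the outer operators onto the second slot:
\[ \bigl\langle W_z U A U W_{-z}\,1,\ 1\bigr\rangle_{F_t^2} = \bigl\langle A\,(U W_{-z}1),\ U W_{-z}1\bigr\rangle_{F_t^2}. \]
Finally $W_{-z}1 = k_{-z}^t$ and $U k_{-z}^t(w) = k_{-z}^t(-w) = e^{\frac{w\cdot\overline z}{t} - \frac{1}{2t}|z|^2} = k_z^t(w)$, so $U W_{-z}1 = k_z^t$ and the right-hand side equals $\langle A k_z^t, k_z^t\rangle_{F_t^2} = \widetilde A(z)$, which is the claim.

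There is no serious obstacle here; the statement is in essence a bookkeeping computation. The only points requiring mild care are the reduction of the Banach-space trace to the Hilbert inner product on rank-one operators, and the observation that the $F_t^2$-adjoints and the Banach-space adjoints of $W_z$ and $U$ coincide because these operators restrict compatibly to every $F_t^p$, $1 \le p$. All of this has been set up in the preceding subsections.
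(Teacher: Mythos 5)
Your proof is correct. It takes essentially the same route as the paper (unwind the trace of a rank-one operator composed with a bounded operator), but you are in fact slightly more careful: you apply the formula $P_{\mathbb C} \ast A(z) = \tr\bigl(P_{\mathbb C}\,\alpha_z(UAU)\bigr)$ directly, whereas the paper computes $\tr\bigl(A W_z U P_{\mathbb C} U W_{-z}\bigr)$, i.e.\ the expression one would naively write for $A \ast P_{\mathbb C}(z)$, and silently relies on cyclicity of the trace (or commutativity of the convolution, which was only proved on $\mathcal A^1 \times \mathcal A^1$) together with $U P_{\mathbb C} U = P_{\mathbb C}$ to see this coincides with $P_{\mathbb C} \ast A(z)$. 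The paper then identifies the unique nonzero eigenvalue of $A W_z U P_{\mathbb C} U W_{-z}$ and invokes trace-equals-sum-of-eigenvalues, whereas you reduce $\tr\bigl((1\otimes 1)B\bigr)$ to $\langle B1,1\rangle_{F_t^2}$ and then peel off $W_z$ and $U$ using $W_z^\ast = W_{-z}$ and $U^\ast = U$; these are the same underlying fact about rank-one operators, but your phrasing sidesteps the implicit commutativity step and is a touch cleaner. Your side remarks about the nuclear trace reducing to the $F_t^2$-pairing on rank-one operators (since $1 \in F_t^q$ for all $q$) and about the Banach-space and $F_t^2$-adjoints of $W_z$, $U$ agreeing are exactly the points worth flagging, and are both justified.
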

\begin{proof}
One readily checks for $f \in F_t^p$ that
\[ AW_z U P_{\mathbb C} U W_{-z}(f) = Ak_z^t \cdot f(z) e^{-\frac{|z|^2}{2t}}. \]
Therefore, an eigenvector of that operator to a non-zero eigenvalue needs to be a multiple of $Ak_z^t$, and for these eigenvectors we obtain
\[ AW_z U P_{\mathbb C} U W_{-z}(Ak_z) = Ak_z^t \cdot Ak_z^t(z) e^{-\frac{|z|^2}{2t}}, \]
i.e. the only non-zero eigenvalue of $AW_z U P_{\mathbb C} U W_{-z}$ is
\[ Ak_z^t(z) e^{-\frac{|z|^2}{2t}} = \langle Ak_z^t, K_z^t\rangle_{F_t^2} e^{-\frac{|z|^2}{2t}} = \widetilde{A}(z). \]
Since the trace of a finite rank operator coincides with the sum of its eigenvalues, the proof is finished.
\end{proof}
\begin{lem}
For $f \in L^1(\mathbb C^n)$ it holds $T_f^t = \mathcal R_t \ast f \in \mathcal N(F_t^p)$. In particular, $T_f^t$ is a bounded linear operator.
\end{lem}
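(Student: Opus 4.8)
The plan is to unwind the definition of $\mathcal R_t \ast f$ and recognise the resulting formula as the Toeplitz integral. Since $f \in L^1(\mathbb C^n)$ and $\mathcal R_t = \tfrac{1}{(\pi t)^n}P_{\mathbb C} \in \mathcal N(F_t^p)$, the convolution is by definition the Bochner integral
\[ \mathcal R_t \ast f = \int_{\mathbb C^n} f(z)\,\alpha_z(\mathcal R_t)\,dV(z), \]
which converges absolutely in $\mathcal N(F_t^p)$ because $\alpha_z$ preserves the nuclear norm and $\int_{\mathbb C^n} |f(z)|\,\|\alpha_z(\mathcal R_t)\|_{\mathcal N}\,dV(z) = \|f\|_{L^1}\,\|\mathcal R_t\|_{\mathcal N} < \infty$. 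In particular $\mathcal R_t \ast f \in \mathcal N(F_t^p) \subseteq \mathcal L(F_t^p)$, so once the identity $\mathcal R_t \ast f = T_f^t$ is established, the asserted nuclearity (hence boundedness) of $T_f^t$ is immediate.

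Next I would compute $\alpha_z(\mathcal R_t)$ explicitly. From $W_{-z}g(w) = k_{-z}^t(w)g(w+z)$, $P_{\mathbb C}h = h(0)$, and $W_z 1 = k_z^t$, one gets $P_{\mathbb C}W_{-z}g = k_{-z}^t(0)\,g(z) = e^{-|z|^2/(2t)}g(z)$ as a constant function, so that
\[ \alpha_z(\mathcal R_t)g = \tfrac{1}{(\pi t)^n}\,W_z P_{\mathbb C}W_{-z}g = \tfrac{1}{(\pi t)^n}\,e^{-|z|^2/(2t)}\,g(z)\,k_z^t. \]
Thus $\alpha_z(\mathcal R_t)$ is rank one, and since $W_z$ is isometric on $F_t^p$ the norm $\|k_z^t\|_{F_t^p} = \|1\|_{F_t^p}$ does not depend on $z$.

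Then I would apply the Bochner integral to a fixed $g \in F_t^p$ and evaluate at a point $v \in \mathbb C^n$; as point evaluation is a bounded linear functional on $F_t^p$ it commutes with the integral, giving
\[ \big((\mathcal R_t \ast f)g\big)(v) = \tfrac{1}{(\pi t)^n}\int_{\mathbb C^n} f(z)\,e^{-|z|^2/(2t)}\,g(z)\,k_z^t(v)\,dV(z). \]
Using $e^{-|z|^2/(2t)}k_z^t(v) = e^{v\cdot\overline z/t - |z|^2/t}$ and $\tfrac{1}{(\pi t)^n}e^{-|z|^2/t}\,dV(z) = d\mu_t(z)$, the right-hand side equals $\int_{\mathbb C^n} f(z)g(z)\,e^{v\cdot\overline z/t}\,d\mu_t(z) = P_t(fg)(v) = (T_f^t g)(v)$, which is the desired identity. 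The only point deserving care is that this last integral converges absolutely, so that it genuinely represents $T_f^t$ "defined by the same formula": this is exactly the absolute convergence of the $F_t^p$-valued integral above, which follows from the standard Fock-space pointwise bound $|g(z)| \le C\|g\|_{F_t^p}e^{|z|^2/(2t)}$ (cf. \cite{Zhu}), the $z$-independence of $\|k_z^t\|_{F_t^p}$, and $f \in L^1(\mathbb C^n)$. Beyond this bookkeeping there is no real obstacle — all the substance already resides in the convolution formalism; alternatively one could verify the identity first for $f \in C_c(\mathbb C^n)$ and then pass to $f \in L^1(\mathbb C^n)$ using that $f \mapsto \mathcal R_t \ast f$ is continuous into $\mathcal L(F_t^p)$ while, for fixed $g$ and $v$, $f \mapsto (T_f^t g)(v)$ is an $L^1$-continuous functional.
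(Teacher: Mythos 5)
Your proof is correct and follows essentially the same route as the paper: unwind the Bochner integral defining $\mathcal R_t \ast f$, compute the rank-one operator $\alpha_z(\mathcal R_t)$ explicitly, apply it to $g$, and recognize the resulting scalar integral as $P_t(fg)$. The extra bookkeeping you supply (commuting point evaluation with the Bochner integral, the pointwise Fock-space estimate for absolute convergence) is sound and merely makes explicit what the paper leaves implicit.
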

\begin{proof}
For $f \in L^1(\mathbb C^n)$ we have
\begin{align*}
P_{\mathbb C} \ast f (g) &= \int_{\mathbb C^n} f(z) W_{z} P_{\mathbb C} W_{-z} g ~ dV(z)\\
 &= \int_{\mathbb C^n} f(z)W_{z}(1) e^{-\frac{|z|^2}{2t}} g(z) dV(z)\\
 &= \int_{\mathbb C^n} f(z) e^{\frac{\langle \cdot, z\rangle}{t}} g(z) e^{-\frac{|z|^2}{t}} dV(z)\\
 &= (\pi t)^n T_f^t g.
\end{align*}
\end{proof}
The next goal is to extend the relation $\mathcal R_t \ast f = T_f^t$ to all $f \in L^\infty(\mathbb C^n)$.
\begin{prop}\label{Proposition_Toeplitz_identity}
Let $f \in L^\infty(\mathbb C^n)$. Then, we have $\mathcal R_t \ast f= T_f^t$.
\end{prop}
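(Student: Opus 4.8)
The plan is to verify the asserted identity after pairing both sides against nuclear operators. Since $\mathcal R_t \ast f$ and $T_f^t$ both lie in $\mathcal L(F_t^p) \cong \mathcal N(F_t^p)'$, it suffices to show that $\tr\big((\mathcal R_t \ast f)B\big) = \tr(T_f^t B)$ for every $B \in \mathcal N(F_t^p)$.

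For the left-hand side I would use the duality relation defining the convolution $\mathcal A^1 \times \mathcal A^\infty \to \mathcal A^\infty$ (together with commutativity of $\ast$, which extends from $\mathcal A^1$, to read $\mathcal R_t \ast f$ as $f \ast \mathcal R_t$) and the obvious identity $U\mathcal R_t U = \mathcal R_t$, valid because $P_{\mathbb C}$ is the projection onto the constants. This gives $\langle \mathcal R_t \ast f, B\rangle_{\text{tr}} = \langle f, B \ast \mathcal R_t\rangle_{\text{tr}}$. Now $B \ast \mathcal R_t = \mathcal R_t \ast B = \tfrac{1}{(\pi t)^n} P_{\mathbb C} \ast B = \tfrac{1}{(\pi t)^n}\widetilde B$ by the lemma identifying $P_{\mathbb C}\ast(\cdot)$ with the Berezin transform, and this function lies in $L^1(\mathbb C^n)$ by the estimate \eqref{trace_estimate} for the convolution of two nuclear operators. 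Hence $\langle \mathcal R_t \ast f, B\rangle_{\text{tr}} = \tfrac{1}{(\pi t)^n}\int_{\mathbb C^n} f(z)\widetilde B(z)\,dV(z)$.

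It then remains to establish the trace formula $\tr(T_f^t B) = \tfrac{1}{(\pi t)^n}\int_{\mathbb C^n} f(z)\widetilde B(z)\,dV(z)$ for $f \in L^\infty(\mathbb C^n)$ and $B \in \mathcal N(F_t^p)$. Both sides are $\|\cdot\|_{\mathcal N}$-continuous in $B$ (the right-hand side is bounded by $\tfrac{1}{(\pi t)^n}\|f\|_{L^\infty}\|\widetilde B\|_{L^1} \lesssim \|f\|_{L^\infty}\|B\|_{\mathcal N}$, again by \eqref{trace_estimate}), and the rank-one operators $y \otimes x$ with $x$ and $\widetilde y$ polynomials span a dense subspace of $\mathcal N(F_t^p)$; so it is enough to treat $B = y \otimes x$ of this form. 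For such $B$ one has $T_f^t(y \otimes x) = y \otimes (T_f^t x)$, hence $\tr(T_f^t(y\otimes x)) = y(T_f^t x) = \langle P_t(fx),\widetilde y\rangle_{F_t^2}$, and since $\widetilde y$ is a polynomial (so an element of $F_t^2$) self-adjointness of the orthogonal projection $P_t$ on $L^2(\mathbb C^n,\mu_t)$ gives $\langle P_t(fx),\widetilde y\rangle_{F_t^2} = \int_{\mathbb C^n} f(z)x(z)\overline{\widetilde y(z)}\,d\mu_t(z)$. On the other hand, using $\langle g, k_z^t\rangle_{F_t^2} = g(z)e^{-|z|^2/2t}$ one computes directly that $\widetilde{(y\otimes x)}(z) = x(z)\overline{\widetilde y(z)}\,e^{-|z|^2/t}$, so that $\tfrac{1}{(\pi t)^n}\int_{\mathbb C^n} f\,\widetilde{(y\otimes x)}\,dV = \int_{\mathbb C^n} f x\overline{\widetilde y}\,d\mu_t$, which matches the previous expression.

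Combining the two computations yields $\langle \mathcal R_t \ast f, B\rangle_{\text{tr}} = \langle T_f^t, B\rangle_{\text{tr}}$ for all $B \in \mathcal N(F_t^p)$, whence $\mathcal R_t \ast f = T_f^t$. I expect the only genuinely delicate point to be the trace formula of the preceding paragraph: one must know a priori that $\widetilde B \in L^1(\mathbb C^n)$ for nuclear $B$ so that the pairing $\langle f,\widetilde B\rangle_{\text{tr}}$ is even meaningful — this is precisely what \eqref{trace_estimate} supplies — and one must handle the self-adjointness/Fubini step for $P_t$, which is why restricting to polynomial data (where it reduces to elementary Gaussian integrals) is convenient. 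An essentially equivalent route would be to truncate $f$ to $f\mathbf 1_{\{|z|\le R\}} \in L^1\cap L^\infty$, use the already established case $\mathcal R_t \ast g = T_g^t$ for $g \in L^1$, and let $R \to \infty$, exploiting that $f \mapsto \mathcal R_t \ast f$ is weak$^\ast$-continuous and that $\tr\big(T^t_{f\mathbf 1_{\{|z|>R\}}}B\big) \to 0$ for each $B \in \mathcal N(F_t^p)$.
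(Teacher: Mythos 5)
Your proof is correct, and it diverges from the paper's argument in its second half. Both proofs begin identically: unwind the duality defining the convolution together with $U\mathcal R_tU=\mathcal R_t$ and $B\ast\mathcal R_t=\tfrac{1}{(\pi t)^n}\widetilde B$ to get $\langle\mathcal R_t\ast f,B\rangle_{\mathrm{tr}}=\tfrac{1}{(\pi t)^n}\langle f,\widetilde B\rangle_{\mathrm{tr}}$ for all $B\in\mathcal N(F_t^p)$. At this point the paper takes the single family of test operators $B=k_z^t\otimes k_z^t$, which yields $(\mathcal R_t\ast f)^\sim=\widetilde f^{(t)}=\widetilde{T_f^t}$, and then invokes injectivity of the Berezin transform; you instead prove the general trace identity $\tr(T_f^tB)=\tfrac{1}{(\pi t)^n}\int f\widetilde B\,dV$ by testing against rank-one operators with polynomial data, where the key step is self-adjointness of $P_t$ on $L^2(\mu_t)$, and then pass to all $B$ by $\|\cdot\|_{\mathcal N}$-continuity (the a priori bound $\|\widetilde B\|_{L^1}\lesssim\|B\|_{\mathcal N}$ coming from \eqref{trace_estimate}, exactly as you flag). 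The trade-off: the paper's route is shorter and offloads the final step to a standard known fact (Berezin injectivity), whereas yours is more self-contained and avoids that input at the cost of an explicit Fubini/self-adjointness computation, which you sensibly localize to the polynomial case before extending by density. Both are valid; your variant also makes transparent exactly why the trace pairing is well-defined for nuclear $B$.
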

\begin{proof}
Recall that for $f \in L^\infty(\mathbb C^n)$ the convolution $\mathcal R_t \ast f$ was defined through the duality relation
\[ \langle \mathcal R_t \ast f, B\rangle_{\text{tr}} = \langle f, B \ast (U \mathcal R_t U)\rangle_{\text{tr}}, \quad B \in \mathcal N(F_t^p). \]
It is simple to check that $U P_{\mathbb C} U = P_{\mathbb C}$. Hence, we obtain
\[ \langle \mathcal R_t \ast f, B\rangle_{\text{tr}} = \langle f, B \ast \mathcal R_t\rangle_{\text{tr}} = \frac{1}{(\pi t)^n}\langle f, \widetilde{B}\rangle_{\text{tr}}. \]
Letting $B = k_z^t \otimes k_z^t$, we obtain
\[ \langle \mathcal R_t \ast f, k_z^t \otimes k_z^t\rangle_{\text{tr}} = \frac{1}{(\pi t)^n}\langle f, (k_z^t \otimes k_z^t)^\sim\rangle_{\text{tr}}. \]
For each $A \in \mathcal L(F_t^p)$, the operator $A (k_z^t \otimes k_z^t)$ acts as
\[ A (k_z^t \otimes k_z^t)(g) = Ak_z^t \cdot \langle g, k_z^t\rangle_{F_t^2}, \]
i.e. we obtain for the trace
\[ \langle A, k_z^t \otimes k_z^t\rangle_{\text{tr}} = \tr(A (k_z^t \otimes k_z^t)) = \langle A k_z^t, k_z^t\rangle_{F_t^2} = \widetilde{A}(z). \]
Further, the Berezin transform of $k_z^t \otimes k_z^t$ is given by
\begin{align*}
(k_z^t \otimes k_z^t)^\sim(w) = \langle (k_z^t \otimes k_z^t)(k_w^t), k_w^t\rangle_{F_t^2} = \langle k_z^t, k_w^t\rangle_{F_t^2} \langle k_w^t, k_z^t\rangle_{F_t^2} = e^{-\frac{|z-w|^2}{t}},
\end{align*}
which yields
\[ \langle f, (k_z^t \otimes k_z^t)^\sim\rangle_{\text{tr}} = \int_{\mathbb C^n}f(w) e^{-\frac{|z-w|^2}{t}}dV(w) = (\pi t)^n \widetilde{f}^{(t)}(z). \]
Therefore, the operator $\mathcal R_t \ast f$ fulfils the relation
\[ (\mathcal R_t \ast f)^\sim = \widetilde{f}^{(t)}, \]
which then implies that $\mathcal R_t \ast f= T_f^t$ as the Berezin transform is injective.
\end{proof}
Let us consider the maps
\[ \Psi: L^1(\mathbb C^n) \to \mathcal N(F_t^p), \quad \Psi(f) = T_f^t = \mathcal R_t \ast f\]
and
\[ \Phi: \mathcal N(F_t^p) \to L^1(\mathbb C^n), \quad \Phi(A) = \widetilde{A} = P_{\mathbb C} \ast A. \]
Restating the duality relations for the convolutions, we obtain for $f \in L^1(\mathbb C^n)$ and $B \in \mathcal L(F_t^p) \cong (\mathcal N(F_t^p))'$:
\begin{align*}
\langle \Psi(f), B\rangle_{\text{tr}} = \langle \mathcal R_t \ast f, B\rangle_{\text{tr}} = \langle f, (U \mathcal R_t U) \ast B \rangle_{\text{tr}} = \langle f, \mathcal R_t \ast B\rangle_{\text{tr}},
\end{align*}
i.e. the Banach space adjoint of $\Psi$ is given by
\[ (\Psi)': \mathcal L(F_t^p) \to L^\infty(\mathbb C^n), \quad (\Psi)'(A) = \frac{1}{(\pi t)^n}\widetilde{A}. \]
Analogously, one sees that for $A \in \mathcal N(F_t^p)$ and $g \in L^\infty(\mathbb C^n)$ we have
\begin{align*}
\langle \Phi(A), g\rangle_{\text{tr}} &= \langle P_{\mathbb C} \ast A, g\rangle_{\text{tr}} = \langle A, (U P_{\mathbb C} U) \ast g\rangle_{\text{tr}} = (\pi t)^n\langle A, T_g^t\rangle_{\text{tr}},
\end{align*}
i.e.
\[ (\Phi)': L^\infty(\mathbb C^n) \to \mathcal L(F_t^p), \quad (\Phi)'(f) = (\pi t)^n T_f^t. \]
\begin{prop}\label{proposition_density}
$\{ T_f^t; ~ f \in L^1(\mathbb C^n)\}$ is dense in $\mathcal N(F_t^p)$ and $\{ \widetilde{A}; ~ A \in \mathcal N(F_t^p)\}$ is dense in $L^1(\mathbb C^n)$.
\end{prop}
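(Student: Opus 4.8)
The plan is to read both statements off the two maps $\Psi$ and $\Phi$ introduced just above, using the elementary fact that a bounded operator $T\colon X\to Y$ between Banach spaces has dense range if and only if its Banach space adjoint $T'\colon Y'\to X'$ is injective. Equivalently, a subspace $M\subseteq Y$ is dense precisely when the only element of $Y'$ annihilating $M$ is $0$. We apply this once with $Y=\mathcal N(F_t^p)$, whose dual is $\mathcal L(F_t^p)$ under the trace pairing, and once with $Y=L^1(\mathbb C^n)$, whose dual is $L^\infty(\mathbb C^n)$; in each case the needed adjoint has already been computed.

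For the first statement, $\{T_f^t;~f\in L^1(\mathbb C^n)\}$ is the range of $\Psi$, so it is dense in $\mathcal N(F_t^p)$ iff $(\Psi)'$ is injective. Suppose $B\in\mathcal L(F_t^p)$ with $(\Psi)'(B)=\tfrac{1}{(\pi t)^n}\widetilde{B}=0$. Then $\widetilde B=0$, and since the Berezin transform is injective on $\mathcal L(F_t^p)$ — exactly the fact already used in the proof of Proposition \ref{Proposition_Toeplitz_identity} — we get $B=0$. (Unwinding the duality: if $\langle \Psi(f),B\rangle_{\text{tr}}=0$ for all $f\in L^1(\mathbb C^n)$, then $\langle f,\widetilde B\rangle_{\text{tr}}=0$ for all such $f$, hence $\widetilde B=0$ as an element of $L^\infty(\mathbb C^n)=(L^1(\mathbb C^n))'$.) Thus the range of $\Psi$ is dense in $\mathcal N(F_t^p)$.

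For the second statement, $\{\widetilde A;~A\in\mathcal N(F_t^p)\}$ is the range of $\Phi$, so it is dense in $L^1(\mathbb C^n)$ iff $(\Phi)'$ is injective. Suppose $g\in L^\infty(\mathbb C^n)$ with $(\Phi)'(g)=(\pi t)^n T_g^t=0$, i.e.\ $T_g^t=0$ in $\mathcal L(F_t^p)=(\mathcal N(F_t^p))'$. Taking Berezin transforms and using $\widetilde{T_g^t}=\widetilde{g}^{(t)}$ forces $\widetilde{g}^{(t)}=0$; since $\widetilde{g}^{(t)}$ is, up to a constant, the convolution of $g$ with a Gaussian (as computed in the proof of Proposition \ref{Proposition_Toeplitz_identity}), the injectivity of Gaussian convolution on $L^\infty(\mathbb C^n)$ — seen at once by passing to Fourier transforms of tempered distributions, the Gaussian being smooth and nowhere vanishing — yields $g=0$. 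Hence the range of $\Phi$ is dense in $L^1(\mathbb C^n)$.

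The argument is purely formal once the adjoints $(\Psi)'$ and $(\Phi)'$ are available; the only genuine inputs are the two injectivity facts for the Berezin transform, on operators and on bounded symbols, and these are the points I expect to be the real content rather than bookkeeping (the operator version is already at our disposal, and the symbol version is the standard injectivity of the heat/Gaussian transform).
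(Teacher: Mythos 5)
Your proposal is correct and follows essentially the same route as the paper: both arguments reduce density of the ranges of $\Psi$ and $\Phi$ to injectivity of the Banach space adjoints $(\Psi)'$ (the Berezin transform on $\mathcal L(F_t^p)$) and $(\Phi)'$ (the Toeplitz map on $L^\infty(\mathbb C^n)$), the latter being established by passing to $\widetilde{g}^{(t)}$ and invoking injectivity of the heat/Gaussian transform. The only cosmetic difference is that you justify the heat-transform injectivity via Fourier transforms of tempered distributions whereas the paper simply cites Zhu's Proposition~3.17; the substance is identical.
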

\begin{proof}
As is well-known, the Berezin transform $A \mapsto \widetilde{A}$ ($A \in \mathcal L(F_t^p)$) is injective. Further, the map $f \mapsto T_f^t$ ($f \in L^\infty(\mathbb C^n)$) is also injective: If $T_f^t = 0$, then also $\widetilde{T_f^t} = \widetilde{f}^{(t)} = 0$. Since $\widetilde{f}^{(t)}$ is just the heat transform of $f$ at time $t/4$, the map $f \mapsto \widetilde{f}^{(t)}$ is well-known to be injective \cite[Proposition 3.17]{Zhu}. Hence, $T_f^t = 0$ implies $f = 0$.

Since the above two maps are injective, the Banach space adjoints $(\Psi)'$ and $(\Phi)'$ of the maps $\Psi$ and $\Phi$ are injective. This in turn implies that $\Psi$ and $\Phi$ have dense range, which is just the statement of the proposition.
\end{proof}
We have seen that convolution by $\mathcal R_t$ yields the maps
\begin{align*}
L^1(\mathbb C^n) \to \mathcal N(F_t^p), \quad f &\mapsto T_f^t\\
L^\infty(\mathbb C^n) \to \mathcal L(F_t^p), \quad f &\mapsto T_f^t.
\end{align*}
Further, convolution by $P_{\mathbb C}$ yields maps
\begin{align*}
\mathcal N(F_t^p) \to L^1(\mathbb C^n), \quad A &\mapsto \widetilde{A}\\
\mathcal L(F_t^p) \to L^\infty(\mathbb C^n), \quad A &\mapsto \widetilde{A}.
\end{align*}
Applying now complex interpolation to both maps, we obtain the following result. At least part (i) is already well-known in the case $p = 2$ with different proof, compare e.g. \cite{Isralowitz_Zhu2010, Zhu}.
\begin{lem}Let $1 \leq p_0 < \infty$. \begin{enumerate}[(i)]
\item For $f \in L^{p_0}(\mathbb C^n)$ we have $T_f^t \in \mathcal S^{p_0}(F_t^p)$. 
\item For $A \in \mathcal S^{p_0}(F_t^p)$ we have $\widetilde{A} \in L^{p_0}(\mathbb C^n)$.
\end{enumerate}
\end{lem}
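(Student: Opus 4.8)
The plan is to obtain both assertions by applying the complex interpolation functor to the two pairs of maps exhibited just above the statement, as the final sentence there already indicates. First note that the case $p_0 = 1$ is nothing new: since $\mathcal S^1(F_t^p) = \mathcal N(F_t^p)$, part (i) is the lemma identifying $T_f^t = \mathcal R_t \ast f$ as a nuclear operator for $f \in L^1(\mathbb C^n)$, and part (ii) is the identity $\widetilde A = P_{\mathbb C} \ast A$ combined with (\ref{trace_estimate}). So fix $1 < p_0 < \infty$ and set $\theta := 1/p_0 \in (0,1)$. I will use two facts: on the operator side, $(\mathcal L(F_t^p), \mathcal N(F_t^p))_{[\theta]} = \mathcal S^{p_0}(F_t^p)$ holds by the very definition of $\mathcal S^{p_0}$; on the function side, $(L^\infty(\mathbb C^n), L^1(\mathbb C^n))_{[\theta]} = L^{1/\theta}(\mathbb C^n) = L^{p_0}(\mathbb C^n)$ with equivalent norms, by the classical description of complex interpolation between $L^p$-spaces over a $\sigma$-finite measure space \cite[Theorem 5.1.1]{Bergh_Lofstrom1976} (applicable here since $1/\theta < \infty$).

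For (i): view $f \mapsto T_f^t = \mathcal R_t \ast f$ as a morphism of the Banach couple $(L^\infty(\mathbb C^n), L^1(\mathbb C^n))$ into $(\mathcal L(F_t^p), \mathcal N(F_t^p))$. It maps $L^\infty(\mathbb C^n)$ boundedly into $\mathcal L(F_t^p)$ by Proposition \ref{Proposition_Toeplitz_identity} together with boundedness of Toeplitz operators with bounded symbols; it maps $L^1(\mathbb C^n)$ boundedly into $\mathcal N(F_t^p)$ because $T_f^t = \mathcal R_t \ast f$ with $\| \mathcal R_t \ast f\|_{\mathcal N} \leq C \| f\|_{L^1}$ (Lemma \ref{properties_convolution}); the two restrictions agree on $L^\infty(\mathbb C^n) \cap L^1(\mathbb C^n)$, so the map is well defined on the sum. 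By exactness of the complex method \cite[Theorem 4.1.2]{Bergh_Lofstrom1976}, $f \mapsto T_f^t$ is then bounded from $(L^\infty(\mathbb C^n), L^1(\mathbb C^n))_{[\theta]} = L^{p_0}(\mathbb C^n)$ into $(\mathcal L(F_t^p), \mathcal N(F_t^p))_{[\theta]} = \mathcal S^{p_0}(F_t^p)$, which is precisely (i). Symmetrically, for (ii), the Berezin transform $A \mapsto \widetilde A = P_{\mathbb C} \ast A$ is a morphism of $(\mathcal L(F_t^p), \mathcal N(F_t^p))$ into $(L^\infty(\mathbb C^n), L^1(\mathbb C^n))$: it maps $\mathcal L(F_t^p)$ boundedly into $L^\infty(\mathbb C^n)$, since $| \widetilde A(z)| = | \langle A k_z^t, k_z^t\rangle_{F_t^2}| \leq C \| A\|_{op}$ for all $z$, and it maps $\mathcal N(F_t^p)$ boundedly into $L^1(\mathbb C^n)$ by $\widetilde A = P_{\mathbb C} \ast A$ and (\ref{trace_estimate}). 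Interpolating at $\theta$ gives a bounded map $\mathcal S^{p_0}(F_t^p) \to L^{p_0}(\mathbb C^n)$, which is (ii).

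The only genuinely delicate point is the identification $(L^\infty(\mathbb C^n), L^1(\mathbb C^n))_{[\theta]} = L^{p_0}(\mathbb C^n)$ and the verification that $(L^\infty, L^1)$ is an admissible couple for the complex method (both spaces embed continuously into the space of measurable functions modulo null sets); one must be slightly careful here because $L^\infty$ is not reflexive, but this is classical. Everything else — boundedness of the four endpoint maps, their agreement on intersections, and exactness of the interpolation functor — is routine from Section 2. I would also note that the $L^p$-interpolation theorem can be bypassed entirely: by Lemma \ref{lemma_convolution_ainfty}(1) and its $\mathcal A^{p_0}$-analogue the convolution $\ast \colon \mathcal A^1 \times \mathcal A^{p_0} \to \mathcal A^{p_0}$ is bounded, so convolving $0 \oplus \mathcal R_t \in \mathcal A^1$ with $f \oplus 0 \in \mathcal A^{p_0}$ produces $0 \oplus T_f^t$ with $\| T_f^t\|_{\mathcal S^{p_0}} \leq C \| f\|_{L^{p_0}}$, and convolving $0 \oplus P_{\mathbb C} \in \mathcal A^1$ with $0 \oplus A \in \mathcal A^{p_0}$ produces $\widetilde A \oplus 0$ with $\| \widetilde A\|_{L^{p_0}} \leq C \| A\|_{\mathcal S^{p_0}}$.
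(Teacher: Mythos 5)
Your main argument is correct and is exactly the paper's intended proof: the paper offers no written proof beyond the sentence ``Applying now complex interpolation to both maps, we obtain the following result,'' and you have simply written out that interpolation argument in full, using the definition $\mathcal S^{p_0} = (\mathcal L, \mathcal N)_{[1/p_0]}$ on the operator side and $(L^\infty, L^1)_{[\theta]} = L^{1/\theta}$ on the function side. One caveat on your closing remark: the alternative via the $\mathcal A^{p_0}$-convolution does not genuinely bypass the $L^p$-interpolation theorem, since the paper obtains the boundedness of $\ast\colon \mathcal A^1 \times \mathcal A^{p_0} \to \mathcal A^{p_0}$ itself by interpolation (the off-diagonal pieces $L^{p_0} \ast \mathcal N \to \mathcal S^{p_0}$ and $\mathcal N \ast \mathcal S^{p_0} \to L^{p_0}$ are precisely the content of the present lemma, so invoking them would be circular); it is the same argument repackaged, not a shortcut.
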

Simple approximation arguments yield now the following:
\begin{lem}\label{lemma_sp_density}Let $1 \leq p_0 < \infty$. 
\begin{enumerate}[(i)]
\item $\{ T_f^t; ~ f \in L^{p_0}(\mathbb C^n)\}$ is dense in $\mathcal S^{p_0}(F_t^p)$.
\item $\{ \widetilde{A}; ~ A \in \mathcal S^{p_0}(F_t^p)\}$ is dense in $L^{p_0}(\mathbb C^n)$.
\end{enumerate}
\end{lem}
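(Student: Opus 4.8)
The plan is to read both statements as \emph{dense-range} assertions for the two convolution-induced maps at hand: by the previous lemma, $\Psi_{p_0}\colon L^{p_0}(\mathbb C^n)\to\mathcal S^{p_0}(F_t^p)$, $f\mapsto T_f^t=\mathcal R_t\ast f$, and $\Phi_{p_0}\colon\mathcal S^{p_0}(F_t^p)\to L^{p_0}(\mathbb C^n)$, $A\mapsto\widetilde A=P_{\mathbb C}\ast A$, are well defined, and they are bounded since they are obtained by complex interpolation from the bounded maps at the endpoints $p_0=1$ and $p_0=\infty$. Statement (i) is precisely that $\Psi_{p_0}$ has dense range, and (ii) that $\Phi_{p_0}$ has dense range. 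I would establish (i) by an explicit approximation chain and (ii) by a Hahn--Banach argument carried out on $L^{p_0}(\mathbb C^n)$, so that the dual of $\mathcal S^{p_0}(F_t^p)$ (not explicitly known for non-Hilbert $F_t^p$) never enters.

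For (i), I would first record that $\mathcal N(F_t^p)=\mathcal S^1(F_t^p)$ is $\|\cdot\|_{\mathcal S^{p_0}}$-dense in $\mathcal S^{p_0}(F_t^p)$: for $p_0>1$ this is the standard density of $\Delta(\overline A)$ in the complex interpolation space $(\mathcal L(F_t^p),\mathcal N(F_t^p))_{[1/p_0]}$, and for $p_0=1$ it is trivial. Combined with Proposition \ref{proposition_density} and the norm inequality $\|\cdot\|_{\mathcal S^{p_0}}\le\|\cdot\|_{\mathcal S^1}$, this already shows that $\{T_f^t:f\in L^1(\mathbb C^n)\}$ is $\mathcal S^{p_0}$-dense in $\mathcal S^{p_0}(F_t^p)$. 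It then only remains to trade the $L^1$-symbols for $L^{p_0}$-symbols: given $f\in L^1(\mathbb C^n)$ put $g_k:=f\cdot\mathbf 1_{E_k}$ with $E_k:=\{z:|f(z)|\le k,\ |z|\le k\}$, which is bounded with compact support, hence lies in $L^1(\mathbb C^n)\cap L^{p_0}(\mathbb C^n)$, and satisfies $\|f-g_k\|_{L^1}\to0$ by dominated convergence. Using $T_{f-g_k}^t=\mathcal R_t\ast(f-g_k)$, the bound $\|\mathcal R_t\ast h\|_{\mathcal N}\le\|h\|_{L^1}\|\mathcal R_t\|_{\mathcal N}$, and once more $\|\cdot\|_{\mathcal S^{p_0}}\le\|\cdot\|_{\mathcal S^1}$, one gets $\|T_f^t-T_{g_k}^t\|_{\mathcal S^{p_0}}\le C\|f-g_k\|_{L^1}\to0$. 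Hence $\{T_g^t:g\in L^{p_0}(\mathbb C^n)\}$ is dense in $\mathcal S^{p_0}(F_t^p)$.

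For (ii), since $\Phi_{p_0}$ is bounded, its range is dense in $L^{p_0}(\mathbb C^n)$ if and only if no nonzero element of $(L^{p_0}(\mathbb C^n))'$ annihilates it; and $(L^{p_0}(\mathbb C^n))'$ is $L^{q_0}(\mathbb C^n)$ with $\tfrac1{p_0}+\tfrac1{q_0}=1$ for $p_0>1$ and $L^\infty(\mathbb C^n)$ for $p_0=1$. So assume $g$ in the respective dual satisfies $\int_{\mathbb C^n}\widetilde A(z)\,g(z)\,dV(z)=0$ for every $A\in\mathcal S^{p_0}(F_t^p)$. Testing this against the rank-one operators $A=k_a^t\otimes k_a^t\in\mathcal N(F_t^p)\subseteq\mathcal S^{p_0}(F_t^p)$, $a\in\mathbb C^n$, and using the identity $(k_a^t\otimes k_a^t)^\sim(z)=e^{-|z-a|^2/t}$ computed in the proof of Proposition \ref{Proposition_Toeplitz_identity}, the condition becomes $\int_{\mathbb C^n}e^{-|z-a|^2/t}g(z)\,dV(z)=0$ for all $a\in\mathbb C^n$, i.e.\ the heat transform of $g$ vanishes identically. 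Convolution with a Gaussian is injective on $L^{q_0}(\mathbb C^n)$ (resp.\ on $L^\infty(\mathbb C^n)$), its Fourier transform being a nowhere-vanishing Gaussian, so $g=0$, and $\Phi_{p_0}$ has dense range.

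The argument is essentially routine; the two points deserving some care are the complex-interpolation density of $\mathcal N(F_t^p)$ in $\mathcal S^{p_0}(F_t^p)$ used in (i) (without it the successive approximations $T_g^t\to T_f^t$ and then into $\mathcal N(F_t^p)$ do not visibly exhaust all of $\mathcal S^{p_0}(F_t^p)$), and, in (ii), checking that the test operators $k_a^t\otimes k_a^t$ genuinely lie in $\mathcal S^{p_0}(F_t^p)$ and that the injectivity of the heat transform is applied on the correct space $L^{q_0}(\mathbb C^n)$ rather than only on $L^1(\mathbb C^n)$ or $L^\infty(\mathbb C^n)$.
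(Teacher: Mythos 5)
Your proof is correct and supplies exactly the details that the paper elides with the remark that ``simple approximation arguments yield'' the lemma. For (i) the chain — interpolation density of $\mathcal N(F_t^p)$ in $\mathcal S^{p_0}(F_t^p)$, Proposition \ref{proposition_density} in the dominating nuclear norm, then truncation of $L^1$-symbols into $L^1\cap L^{p_0}$ — is the natural route given how the interpolation framework was set up. For (ii) your choice to dualize on the $L^{p_0}$ side and test against the rank-one operators $k_a^t\otimes k_a^t$ is well taken, since it avoids the (unknown) dual of $\mathcal S^{p_0}(F_t^p)$ and reduces the claim to injectivity of Gaussian convolution on $L^{q_0}$, mirroring the duality argument used for Proposition \ref{proposition_density}.
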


\subsection{Characterizations of $\mathcal C_1$}\label{section_approximate_identity}
In the following, we will denote by $f_s$ the function
\[ f_s(z) = \frac{1}{(\pi s)^n}e^{-\frac{|z|^2}{s}}, \]
where $s > 0$. The result of this section is the following:
\begin{prop}\label{proposition_characterizations_C_1}
The following equalities hold true:
\begin{align*}
\mathcal C_1 &= \overline{\Span} \{ g \ast B; ~g \in L^1(\mathbb C^n), ~B \in \mathcal L(F_t^p)\}\\
&= \{ B \in \mathcal L(F_t^p); ~ f_s \ast B \to B \text{ in operator norm as } s \to 0 \}\\
&= \overline{\mathcal R_t \ast \BUC(\mathbb C^n)}\\
&= \{ g \ast B; ~ g \in L^1(\mathbb C^n), ~ B \in \mathcal C_1\},
\end{align*}
where closures are taken in $\mathcal L(F_t^p)$.
\end{prop}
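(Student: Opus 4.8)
The plan is to prove the first three equalities by establishing the cycle of inclusions
\[
\overline{\mathcal R_t \ast \BUC(\mathbb C^n)} \subseteq \mathcal C_1 \subseteq \{ B \in \mathcal L(F_t^p) : f_s \ast B \to B \} \subseteq \overline{\Span}\{ g \ast B : g \in L^1(\mathbb C^n), B \in \mathcal L(F_t^p) \} \subseteq \overline{\mathcal R_t \ast \BUC(\mathbb C^n)},
\]
and then to deduce the last equality, $\mathcal C_1 = \{ g \ast B : g \in L^1(\mathbb C^n), B \in \mathcal C_1 \}$, by a module-factorization argument. Two preliminary observations are used throughout: first, $\mathcal C_1$ is a \emph{closed} subspace of $\mathcal L(F_t^p)$, since each $\alpha_z$ is isometric, so $\|\alpha_z(A) - \alpha_z(A_n)\|_{op} = \|A - A_n\|_{op}$ and an operator-norm limit of $\alpha$-norm-continuous operators is again $\alpha$-norm-continuous; second, $\mathcal R_t \ast \BUC(\mathbb C^n) = \{ T_h^t : h \in \BUC(\mathbb C^n) \}$ is a linear subspace by Proposition \ref{Proposition_Toeplitz_identity}.

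The inclusion $\overline{\mathcal R_t \ast \BUC(\mathbb C^n)} \subseteq \mathcal C_1$ is immediate: by Lemma \ref{Lemma_continuity}, $T_h^t = \mathcal R_t \ast h \in \mathcal C_1$ for every $h \in L^\infty(\mathbb C^n)$, and $\mathcal C_1$ is closed. For $\mathcal C_1 \subseteq \{ B : f_s \ast B \to B \}$, let $B \in \mathcal C_1$; the integral representation $f_s \ast B = \int_{\mathbb C^n} f_s(z)\, \alpha_z(B)\, dV(z)$, valid for $B \in \mathcal C_1$, together with $\int_{\mathbb C^n} f_s(z)\, dV(z) = 1$ gives
\[
f_s \ast B - B = \int_{\mathbb C^n} f_s(z)\,\bigl( \alpha_z(B) - B \bigr)\, dV(z),
\]
so that $\| f_s \ast B - B \|_{op} \le \int_{\mathbb C^n} f_s(z)\, \| \alpha_z(B) - B \|_{op}\, dV(z)$. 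Since $z \mapsto \| \alpha_z(B) - B \|_{op}$ is continuous, bounded by $2\|B\|_{op}$, and vanishes at $0$, the usual approximate-identity estimate (split the integral at a small ball and use $\mu_s(\{ |z| \ge \delta \}) \to 0$ as $s \to 0$) shows this tends to $0$. Finally, $\{ B : f_s \ast B \to B \} \subseteq \overline{\Span}\{ g \ast B \}$ is trivial, since each $f_s \ast B$ is already of the form $g \ast B$ with $g = f_s \in L^1(\mathbb C^n)$.

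The remaining inclusion $\overline{\Span}\{ g \ast B : g \in L^1(\mathbb C^n), B \in \mathcal L(F_t^p) \} \subseteq \overline{\mathcal R_t \ast \BUC(\mathbb C^n)}$ is the heart of the matter; as the right-hand side is a closed subspace, it suffices to show $g \ast B \in \overline{\mathcal R_t \ast \BUC(\mathbb C^n)}$ for each $g \in L^1(\mathbb C^n)$ and $B \in \mathcal L(F_t^p)$. Using Proposition \ref{proposition_density}, pick $C_n \in \mathcal N(F_t^p)$ with $\widetilde{C_n} \to g$ in $L^1(\mathbb C^n)$; by Lemma \ref{lemma_convolution_ainfty}(1) it follows that $\widetilde{C_n} \ast B \to g \ast B$ in operator norm. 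Writing $\widetilde{C_n} = P_{\mathbb C} \ast C_n$ and invoking the special case $(A_1 \ast A_2) \ast B = A_1 \ast (A_2 \ast B)$ of the associativity in Lemma \ref{lemma_convolution_ainfty}(2), valid for $A_1, A_2 \in \mathcal N(F_t^p)$, $B \in \mathcal L(F_t^p)$ — here $A_1 \ast A_2$ is a \emph{function} and $A_2 \ast B \in \BUC(\mathbb C^n)$ by Lemma \ref{Lemma_continuity} — we get $\widetilde{C_n} \ast B = P_{\mathbb C} \ast (C_n \ast B) = (\pi t)^n\, \mathcal R_t \ast (C_n \ast B) = (\pi t)^n\, T_{C_n \ast B}^t$ by Proposition \ref{Proposition_Toeplitz_identity}. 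Hence $g \ast B$ is an operator-norm limit of Toeplitz operators with $\BUC$ symbols, which closes the cycle and proves the first four sets coincide. The main obstacle here is essentially bookkeeping: one must keep careful track of which convolutions land in the function component and which in the operator component, so that exactly the right instances of the boundedness and associativity from Lemmas \ref{lemma_convolution_ainfty}--\ref{Lemma_continuity} apply.

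It remains to prove $\mathcal C_1 = \{ g \ast B : g \in L^1(\mathbb C^n), B \in \mathcal C_1 \}$. The inclusion $\supseteq$ holds because $g \ast B \in \mathcal C_1$ whenever $g \in L^1(\mathbb C^n)$ and $B \in \mathcal L(F_t^p)$, by Lemma \ref{Lemma_continuity}. For $\subseteq$, observe that $\mathcal C_1$ is a Banach left module over the convolution algebra $L^1(\mathbb C^n)$ under the action $g \cdot B := g \ast B$ (boundedness and associativity from Lemma \ref{lemma_convolution_ainfty}, well-definedness from Lemma \ref{Lemma_continuity}), that $(f_s)_{s > 0}$ is a bounded approximate identity of $L^1(\mathbb C^n)$, and that this module is non-degenerate because $f_s \ast B \to B$ for every $B \in \mathcal C_1$, as shown above. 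Cohen's factorization theorem (applied after rescaling the $L^1$-norm so that the module action becomes contractive) then yields $L^1(\mathbb C^n) \ast \mathcal C_1 = \mathcal C_1$, which is exactly the asserted identity.
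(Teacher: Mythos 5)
Your proof is correct and follows essentially the same approach as the paper, with the minor organizational difference that you close one four-link cycle of inclusions rather than first proving the equality of $\mathcal C_1$ with the first two sets and then separately showing $\mathcal R_t \ast \BUC(\mathbb C^n)$ is dense in $\mathcal C_1$. The technical content of the key step is identical to the paper's: pick $C_n \in \mathcal N(F_t^p)$ with $\widetilde{C_n} = P_{\mathbb C}\ast C_n \to g$ in $L^1$, use the convolution norm estimate to pass to the limit, and invoke the associativity $(P_{\mathbb C}\ast C_n)\ast B = P_{\mathbb C}\ast(C_n \ast B)$ (which is precisely the instance proved in detail in Lemma \ref{properties_convolution} and carried to $\mathcal A^\infty$ in Lemma \ref{lemma_convolution_ainfty}(2)) together with Lemma \ref{Lemma_continuity} to land in $\mathcal R_t \ast \BUC(\mathbb C^n)$. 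Your explicit justification that $\mathcal C_1$ is operator-norm closed and your spelled-out invocation of Cohen's factorization theorem (which the paper only cites) are both fine and add useful detail.
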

\begin{rem}
Once we have proven the first three equalities, the last equality follows directly from the Cohen-Hewitt factorization theorem. Since we will not need this equality, we do not discuss the factorization theorem and refer to the literature (e.g. \cite{Doran_Wichmann1979}). The equality
\[ \mathcal C_1 = \{ B \in \mathcal L(F_t^p); ~ g_s \ast B \to B \text{ in operator norm as } s \to 0\} \]
is well-known in the theory of Banach modules over locally compact groups. We give a short proof below for completeness.
\end{rem}
We prove the following lemma as a preparation.
\begin{lem}\label{Proposition_approximate_identity}
Convolution by $f_s$ is an approximate identity in $\mathcal A^{p_0}$ for $1 \leq p_0 < \infty$ and in $\mathcal C$, i.e.
\begin{align*}
\| f_s \ast (g_1 \oplus A_1) - (g_1 \oplus A_1)\|_{\mathcal A^{p_0}} &\to 0, \quad s \to 0\\
\| f_s \ast (g_2 \oplus A_2) - (g_2 \oplus A_2) \|_{\mathcal A^\infty} &\to 0, \quad s \to 0
\end{align*}
for each $(g_1 \oplus A_1) \in \mathcal A^{p_0}$ and $(g_2 \oplus A_2) \in \mathcal C$.
\end{lem}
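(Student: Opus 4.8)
The statement to prove is that convolution by $f_s$ forms an approximate identity on $\mathcal A^{p_0}$ (for $1 \le p_0 < \infty$) and on $\mathcal C$. The natural approach splits into the scalar (function) component and the operator component of each space, since the convolution $f_s \ast (g \oplus A) = (f_s \ast g) \oplus (f_s \ast A)$ decouples. First I would recall that on the scalar side, $f_s$ is (up to the normalization $\int f_s \, dV = 1$) nothing but the Gaussian mollifier, so $f_s \ast g \to g$ in $L^{p_0}(\mathbb C^n)$ for $g \in L^{p_0}$, $1 \le p_0 < \infty$, and in $\| \cdot \|_{L^\infty}$ for $g \in \BUC(\mathbb C^n) = \mathcal C_0$; these are classical facts about approximate identities and I would just cite them. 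So the content is entirely on the operator side: I must show $\| f_s \ast A - A\|_{\mathcal S^{p_0}} \to 0$ for $A \in \mathcal S^{p_0}(F_t^p)$, and $\|f_s \ast A - A\|_{op} \to 0$ for $A \in \mathcal C_1$.

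For the operator side I would use the integral representation of the convolution. Since $\int_{\mathbb C^n} f_s(z) \, dV(z) = 1$, we can write, for $A \in \mathcal N(F_t^p)$,
\[ f_s \ast A - A = \int_{\mathbb C^n} f_s(z) \bigl( \alpha_z(A) - A \bigr) \, dV(z), \]
a Bochner integral in $\mathcal N(F_t^p)$, whence
\[ \| f_s \ast A - A\|_{\mathcal N} \le \int_{\mathbb C^n} f_s(z) \, \| \alpha_z(A) - A\|_{\mathcal N} \, dV(z). \]
By Lemma \ref{lemma_continuity_shifts}(1), $z \mapsto \| \alpha_z(A) - A\|_{\mathcal N}$ is continuous and vanishes at $z = 0$, and it is bounded by $2\|A\|_{\mathcal N}$; since $f_s$ concentrates all its mass near the origin as $s \to 0$, a standard splitting of the integral into a small ball $\{|z| < \delta\}$ and its complement shows the right-hand side tends to $0$. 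This handles $p_0 = 1$, i.e. $\mathcal A^1$. For general $1 < p_0 < \infty$, I would invoke complex interpolation exactly as was done for the convolution itself: the map $A \mapsto f_s \ast A - A$ is bounded on $\mathcal N(F_t^p)$ (norm $\le \delta_s^{\mathcal N} \to 0$, say) and on $\mathcal L(F_t^p)$ (norm $\le 2$), so by interpolation it is bounded on $\mathcal S^{p_0}(F_t^p) = (\mathcal L(F_t^p), \mathcal N(F_t^p))_{[1/p_0]}$ with norm at most $2^{1-1/p_0} (\delta_s^{\mathcal N})^{1/p_0} \to 0$; combined with the scalar estimate and taking the max, this gives the $\mathcal A^{p_0}$ statement. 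For $A \in \mathcal C_1$ the same integral bound applies verbatim with $\|\cdot\|_{op}$ in place of $\|\cdot\|_{\mathcal N}$, using the \emph{definition} of $\mathcal C_1$ (continuity of $z \mapsto \alpha_z(A)$ in operator norm), together with $f_1 \ast A = \int f_1(z)\alpha_z(A)\,dV(z) \in \mathcal C_1$ as established just before the statement; the analogous ball-splitting argument closes this case. Combining with $f_s \ast g \to g$ in $L^\infty$ for $g \in \mathcal C_0 = \BUC(\mathbb C^n)$ yields the $\mathcal C$ statement.

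The only mild obstacle is bookkeeping: one must make sure the interpolation step is legitimate — i.e. that $A \mapsto f_s \ast A - A$ really is a bounded linear map of the compatible couple $(\mathcal L(F_t^p), \mathcal N(F_t^p))$ into itself with the claimed norms, which is immediate from Lemma \ref{lemma_convolution_ainfty}(1) and Lemma \ref{properties_convolution} applied to $f_s \oplus 0$ together with the triangle inequality — and that the interpolation norm of the difference genuinely goes to $0$, for which the quantitative bound $\|T\|_{[\theta]} \le \|T\|_0^{1-\theta}\|T\|_1^\theta$ on the interpolated operator norm is exactly what is needed. Everything else is the standard approximate-identity argument for the Gaussian kernel, carried out in a Banach-space-valued setting, and requires no new ideas beyond the continuity of the group action $\alpha$ already recorded in Lemma \ref{lemma_continuity_shifts}.
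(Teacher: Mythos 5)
Your treatment of the scalar components and of $\mathcal C_1$ is fine, and the $\mathcal C_1$ argument is in essence the one the paper gives (bound $\|f_s\ast B - B\|_{op}$ by $\int f_s(z)\|\alpha_z(B)-B\|_{op}\,dV(z)$ and use that the integrand is bounded, continuous, and vanishes at the origin). The interpolation step for $\mathcal S^{p_0}$, however, contains a genuine error.

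What your first paragraph establishes is a \emph{pointwise} statement: for each fixed $A\in\mathcal N(F_t^p)$ one has $\|f_s\ast A - A\|_{\mathcal N}\to 0$. You then promote this to the claim that the linear map $T_s\colon A\mapsto f_s\ast A - A$ has operator norm on $\mathcal N(F_t^p)$ tending to zero, and interpolate this operator-norm bound. That promotion is false: approximate identities on an infinite-dimensional Banach space converge strongly but never in the uniform operator topology. This is already visible in the purely scalar model $L^1(\mathbb C^n)$, where $\|f_s\ast g_\varepsilon - g_\varepsilon\|_{L^1}$ stays bounded below (close to $2$) for normalized bumps $g_\varepsilon$ of width $\varepsilon\ll\sqrt{s}$; the operator part inherits the same phenomenon, because the group action $z\mapsto\alpha_z$ on $\mathcal N(F_t^p)$ is strongly but not norm continuous. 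So the quantity you call $\delta_s^{\mathcal N}$ does not tend to zero, and the bound $\|T_s\|_{\mathcal S^{p_0}\to\mathcal S^{p_0}}\le 2^{1-1/p_0}(\delta_s^{\mathcal N})^{1/p_0}$ gives you nothing.

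The interpolation idea can be salvaged, but it has to be applied to elements rather than to the operator $T_s$. For $B\in\Delta(\overline A)=\mathcal N(F_t^p)$ one has the standard log-convexity of the complex interpolation norm, $\|B\|_{\mathcal S^{p_0}}\le\|B\|_{op}^{1-1/p_0}\,\|B\|_{\mathcal N}^{1/p_0}$. Taking $B=T_sA$ with $A\in\mathcal N(F_t^p)$ fixed, the right-hand side tends to $0$ by your $\mathcal A^1$ estimate together with the uniform bound $\|T_sA\|_{op}\le 2\|A\|_{op}$. This gives convergence on the dense subspace $\mathcal N(F_t^p)\subseteq\mathcal S^{p_0}(F_t^p)$, and since $\|T_s\|_{\mathcal S^{p_0}\to\mathcal S^{p_0}}\le 2$ uniformly in $s$ (which \emph{is} correct), a $3\varepsilon$-argument completes the proof. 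The paper instead establishes the convergence directly on the dense set $\{\mathcal R_t\ast g_1;\,g_1\in L^{p_0}(\mathbb C^n)\}$, using the identity $f_s\ast(\mathcal R_t\ast g_1)=\mathcal R_t\ast(f_s\ast g_1)$ and the scalar estimate, and then invokes density (Lemma \ref{lemma_sp_density}). Both routes are of comparable length once yours is fixed; the paper's avoids interpolation-theoretic input at this step, while yours avoids the density lemma for Toeplitz operators.
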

\begin{proof}
For $g_1 \in L^{p_0}(\mathbb C^n)$ and $g_2 \in \BUC(\mathbb C^n)$ it is well-known and not hard to prove that
\begin{align*}
\| &f_s \ast g_1 - g_1\|_{L^{p_0}} \to 0, \quad s \to 0,\\
\| &f_s \ast g_2 - g_2\|_{L^\infty} \to 0, \quad s \to 0.
\end{align*}
Consider operators of the form
\[ A_1 = \mathcal R_t \ast g_1 \]
with, as above, $g_1 \in L^{p_0}(\mathbb C^n)$. For $g \in L^1(\mathbb C^n)$, one easily establishes the identity 
\begin{align*}
f_s \ast (\mathcal R_t \ast g) = \mathcal R_t \ast (f_s \ast g)
\end{align*}
using Lemma \ref{properties_convolution}, which then carries over to the case $g \in L^{p_0}(\mathbb C^n)$. We therefore obtain
\begin{align*}
\| f_s \ast (\mathcal R_t \ast g_1) - \mathcal R_t \ast g_1\|_{\mathcal S^{p_0}} &= \| \mathcal R_t \ast (f_s \ast g_1 - g_1) \|_{\mathcal S^{p_0}}\\
&\leq \| \mathcal R_t\|_{\mathcal N} \| f_s \ast g_1 - g_1\|_{L^{p_0}}\\
&\to 0, \quad s \to 0
\end{align*}
by the $\mathcal A^{p_0}$-version of Lemma \ref{lemma_convolution_ainfty}. By Lemma \ref{lemma_sp_density}(i), $\mathcal R_t \ast L^{p_0}(\mathbb C^n) $ is dense in $\mathcal S^{p_0}(F_t^p)$, hence the result for $\mathcal S^{p_0}(F_t^p)$ follows from some standard density argument.

Let $B \in \mathcal C_1$, i.e. $z \mapsto W_z B W_{-z}$ is continuous with respect to the operator norm. We claim that $f_s \ast B \to B$ in operator norm as $s \to 0$. Using basic properties of the Bochner integral we obtain
\begin{align*}
\| B - f_s \ast B\|_{op} &= \left \| \int_{\mathbb C^n} f_s(z) B - f_s(z) W_z B W_{-z} dV(z) \right \|_{op}\\
&\leq \int_{\mathbb C^n} f_s(z) \| B - W_z B W_{-z}\|_{op} ~dV(z).
\end{align*}
An easy consequence of $B \in \mathcal C_1$ and the inverse triangle inequality is the fact that $z \mapsto \| B - W_z B W_{-z}\|$ is in $\BUC(\mathbb C^n)$. Therefore, we have
\[ \int_{\mathbb C^n} f_s(z) \| B - W_z B W_{-z}\|_{op} ~dV(z) \to \| B - W_0 B W_{-0}\|_{op} = 0, \quad s \to 0, \]
and thus $f_s \ast B \to B$.
\end{proof}
\begin{proof}[Proof of Proposition \ref{proposition_characterizations_C_1}]
The following inclusions hold true due to Lemma \ref{Lemma_continuity}:
\begin{align*}
\mathcal C_1 &\supseteq \overline{\Span} \{ g \ast B; ~g \in L^1(\mathbb C^n), ~B \in \mathcal L(F_t^p)\}\\
&\supseteq \{ B \in \mathcal L(F_t^p); ~f_s \ast B \to B \text{ in operator norm as } s \to 0\}.
\end{align*}
The previous lemma proves the inclusion
\[ \mathcal C_1 \subseteq \{ B \in \mathcal L(F_t^p); ~ f_s \ast B \to B \text{ in operator norm as } s \to 0 \}, \]
i.e. we obtain
\begin{align*}
\mathcal C_1 &= \overline{\Span} \{ g \ast B; ~g \in L^1(\mathbb C^n), ~B \in \mathcal L(F_t^p)\}\\
&= \{ B \in \mathcal L(F_t^p); ~f_s \ast B \to B \text{ in operator norm as } s \to 0\}.
\end{align*}
Lemma \ref{Lemma_continuity} also yields that $\mathcal R_t \ast \BUC(\mathbb C^n) \subseteq \mathcal C_1$.  It remains to show that $\mathcal R_t \ast \BUC(\mathbb C^n)$ is dense in $\mathcal C_1$. Let $B \in \mathcal C_1$. Then, we can choose $g \in L^1(\mathbb C^n)$ such that $\| B - g \ast B\|_{op} < \varepsilon$ according to Lemma \ref{Proposition_approximate_identity}. Since $\mathcal R_t\ast \mathcal N(F_t^p)$ is dense in $L^1(\mathbb C^n)$ by Proposition \ref{proposition_density}, we can choose $C \in \mathcal N(F_t^p)$ such that $\| g - \mathcal R_t \ast C \|_{L^1} < \varepsilon$. Combining this, we obtain
\[ \| B - \mathcal R_t \ast (C \ast B)\|_{op} \leq \| B - g \ast B\|_{op} + \| (g - \mathcal R_t \ast C) \ast B\|_{op} \leq (1 + \| B\|_{op})\varepsilon, \]
and $C \ast B \in \BUC(\mathbb C^n)$ due to Lemma \ref{Lemma_continuity}. Observe that the associativity used here is not an issue by Lemma \ref{lemma_convolution_ainfty}. Therefore, $\mathcal R_t \ast \BUC(\mathbb C^n)$ is dense in $\mathcal C_1$. Hence, we have proven
\begin{align*}
\mathcal C_1 &= \overline{\Span} \{ g \ast B; ~g \in L^1(\mathbb C^n), ~B \in \mathcal L(F_t^p)\}\\
&= \{ B \in \mathcal L(F_t^p); ~ g_s \ast B \to B \text{ in operator norm as } s \to 0 \}\\
&= \overline{\mathcal R_t \ast \BUC(\mathbb C^n)}.
\end{align*}
As already mentioned, the equality
\[ \mathcal C_1 = \{ g \ast B; ~ g \in L^1(\mathbb C^n), ~B \in \mathcal C_1\} \]
is now a consequence of the Cohen-Hewitt factorization theorem.
\end{proof}
\subsection{Correspondence Theory}
This section closely follows the setup introduced by R. Werner in \cite{Werner1984}. Definitions and proofs are as presented in that paper, but we added a certain amount of details for convenience.
\begin{defn}
A subspace $\mathcal D = \mathcal D_0 \oplus \mathcal D_1 \subseteq \mathcal A^{\infty}$ is said to be a \emph{pair} if $\mathcal N(F_t^p) \ast \mathcal D \subseteq \mathcal D$. In this case, $\mathcal D_0$ and $\mathcal D_1$ are called \emph{corresponding spaces}.
\end{defn}
Before we turn to the main result of Correspondence Theory below, we will need the following lemma:
\begin{lem}\label{Lemma_translation_invariant}
Let $\mathcal D \subseteq \mathcal C$ be closed and $\alpha$-invariant. Then, we have
\[ L^1(\mathbb C^n) \ast \mathcal D \subseteq \mathcal D. \]
\end{lem}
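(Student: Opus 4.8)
The plan is to deduce the statement from the approximate-identity result (Lemma \ref{Proposition_approximate_identity}) together with the density of $\mathcal R_t \ast \mathcal N(F_t^p)$ in $L^1(\mathbb C^n)$ (Proposition \ref{proposition_density}) and the basic associativity and mapping properties of the convolution. The key point to exploit is that $\mathcal N(F_t^p) \ast \mathcal D \subseteq \mathcal D$ is essentially built into the hypothesis via $\alpha$-invariance: if $A \in \mathcal N(F_t^p)$ and $d \in \mathcal D$, then $A \ast d$ lies in the closed linear span of the orbit $\{\alpha_z(d) : z \in \mathbb C^n\}$ (this is visible directly from the integral/duality description of the convolution, since $A \ast d$ is a "weighted average" of translates $\alpha_z$ of $d$, respectively of $d$ against translates of the kernel). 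Since $\mathcal D$ is closed and $\alpha$-invariant, that closed span is contained in $\mathcal D$, so $\mathcal N(F_t^p) \ast \mathcal D \subseteq \mathcal D$; in other words $\mathcal D$ is automatically a pair.

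The main step is then: given $g \in L^1(\mathbb C^n)$ and $d = g_2 \oplus A_2 \in \mathcal D$, show $g \ast d \in \mathcal D$. First I would use Proposition \ref{proposition_density} to pick $C \in \mathcal N(F_t^p)$ with $\|g - \mathcal R_t \ast C\|_{L^1}$ arbitrarily small. Then
\[
g \ast d = (\mathcal R_t \ast C) \ast d + (g - \mathcal R_t \ast C) \ast d = \mathcal R_t \ast (C \ast d) + (g - \mathcal R_t \ast C) \ast d,
\]
using associativity from Lemma \ref{lemma_convolution_ainfty}(2) (and its $\mathcal A^{p_0}$ / $\mathcal A^\infty$ versions). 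The second term has $\mathcal A^\infty$-norm at most $C\|g - \mathcal R_t \ast C\|_{L^1}\|d\|_{\mathcal A^\infty}$ by Lemma \ref{lemma_convolution_ainfty}(1), hence is small. For the first term: $C \ast d \in \mathcal N(F_t^p) \ast \mathcal D \subseteq \mathcal D$ by the paragraph above, and convolution with $\mathcal R_t \in \mathcal N(F_t^p)$ again lands in $\mathcal N(F_t^p) \ast \mathcal D \subseteq \mathcal D$. So $g \ast d$ is approximated in $\mathcal A^\infty$-norm, to within an arbitrarily small error, by elements of $\mathcal D$; since $\mathcal D$ is closed, $g \ast d \in \mathcal D$. (Alternatively, and perhaps more cleanly, one can first observe $f_s \ast d \in \mathcal D$ for every $s>0$ by the averaging argument, and then write $g \ast d = \lim_{s\to 0} g \ast (f_s \ast d)$ using Lemma \ref{Proposition_approximate_identity} restricted to $\mathcal C$; but the route through $\mathcal R_t$ and Proposition \ref{proposition_density} is the one that matches the machinery already set up.)

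The one point requiring a little care — and the place I expect the only genuine obstacle — is justifying that $A \ast d$ lies in $\overline{\Span}\{\alpha_z(d)\}$ for general $d \in \mathcal D \subseteq \mathcal C$, i.e. that the convolution really is a limit of averages of $\alpha$-translates in the relevant topology. For the $L^1(\mathbb C^n)$-with-$\mathcal C_1$ and $L^1$-with-$\BUC$ components this is immediate from the Bochner-integral formula $f_1 \ast A_2 = \int f_1(z)\alpha_z(A_2)\,dV(z)$ and its function analogue, since the integrand is norm-continuous on $\mathcal C$ by Lemma \ref{Lemma_continuity} and the integral converges as a Bochner integral, hence lies in the closed span of the integrand's range. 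For the components involving a nuclear operator convolved with a function or operator, one uses instead the duality definition together with the fact (Lemma \ref{properties_convolution}, Lemma \ref{lemma_convolution_ainfty}(3)) that $\alpha_w(A \ast d) = A \ast \alpha_w(d)$, reducing matters to the continuity of $w \mapsto \alpha_w$ on $\mathcal C$ and a Riemann-sum approximation; this is exactly the kind of routine check the excerpt elsewhere defers to "properties of the trace map and the Bochner integral," so I would handle it the same way. Everything else is bookkeeping with the norm estimates and associativity already established.
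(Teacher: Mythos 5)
The first step of your plan contains a genuine error: it is not true that a closed $\alpha$-invariant $\mathcal D \subseteq \mathcal C$ automatically satisfies $\mathcal N(F_t^p) \ast \mathcal D \subseteq \mathcal D$, and in particular $A \ast d$ does \emph{not} lie in $\closedspan\{\alpha_z(d) : z \in \mathbb C^n\}$ for $A \in \mathcal N(F_t^p)$. The obstruction is structural: convolution with a nuclear operator \emph{swaps} the function and operator slots. Writing $d = f_2 \oplus A_2$, one has $(0\oplus A) \ast (f_2 \oplus A_2) = (A \ast A_2) \oplus (A \ast f_2)$, where $A \ast A_2(z) = \tr(A\,\alpha_z(U A_2 U))$ is a function and $A \ast f_2$ is an operator defined by duality; whereas every element of $\closedspan\{\alpha_z(d)\}$ has function part in $\closedspan\{\alpha_z(f_2)\}$ and operator part in $\closedspan\{\alpha_z(A_2)\}$. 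These cannot coincide in general. For a concrete counterexample take $\mathcal D = \mathbb C\,(1 \oplus 0)$, which is closed, $\alpha$-invariant and contained in $\mathcal C$, yet $\mathcal R_t \ast (1\oplus 0) = 0 \oplus T_1^t = 0 \oplus I \notin \mathcal D$. Worse, assuming the pair property here is circular: this lemma is invoked in the proof of Theorem \ref{Theorem_correspondence_theory}(4) precisely to \emph{establish} that $\mathcal D_0 \oplus \mathcal D_1^{-}$ is a pair, so the pair property cannot be presupposed.

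Once that false step is removed, the decomposition $g \ast d = \mathcal R_t \ast (C \ast d) + (g - \mathcal R_t \ast C) \ast d$ no longer helps, since $C \ast d$ need not lie in $\mathcal D$. The paper's actual proof is exactly the ``averaging argument'' you articulate for the $L^1$-with-$\BUC$ and $L^1$-with-$\mathcal C_1$ components, applied directly to $g$ rather than first to $f_s$: for $g \in L^1(\mathbb C^n)$ and $d \in \mathcal D$, the convolution $g \ast d = \int_{\mathbb C^n} g(z)\,\alpha_z(d)\, dV(z)$ is a Bochner integral whose integrand is $\mathcal D$-valued by $\alpha$-invariance and norm-continuous because $\mathcal D \subseteq \mathcal C$; since $\mathcal D$ is a closed subspace of a Banach space, the integral lies in $\mathcal D$. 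That single observation is the entire proof, with no need for the approximate identity, for Proposition \ref{proposition_density}, or for any pair structure.
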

\begin{proof}
As above, for $(f \oplus A) \in \mathcal D$ and $g \in L^1(\mathbb C^n)$ the convolution $g \ast (f \oplus A)$ is defined as a Bochner integral with integrand $z \mapsto g(z) \alpha_z(f \oplus A)$ taking values in the Banach space $\mathcal D$, hence the integral is naturally contained in $\mathcal D$.
\end{proof}
The following result is \cite[Theorem 4.1]{Werner1984}.
\begin{thm}\label{Theorem_correspondence_theory}
\begin{enumerate}[(1)]
\item If $\mathcal D$ is a pair, then $\overline{\mathcal D}$ is also a pair.
\item Let $\mathcal D$ be a pair. Then, $\mathcal R_t \ast \mathcal D_0$ is $\| \cdot\|_{op}$-dense in $\mathcal D_1 \cap \mathcal C_1$ and $P_{\mathbb C} \ast \mathcal D_1$ is $\| \cdot \|_{L^\infty}$-dense in $\mathcal D_0 \cap \mathcal C_0$.
\item Let $\mathcal D$ be a pair. Then,
\begin{align*}
A \in \mathcal C_1 \text{ and } P_{\mathbb C} \ast A \in \mathcal D_0 &\Longrightarrow A \in \overline{\mathcal D_1}\\
f \in \mathcal C_0 \text{ and } \mathcal R_t \ast f \in \mathcal D_1 &\Longrightarrow f \in \overline{\mathcal D_0},
\end{align*}
where closures are taken with respect to $\| \cdot \|_{op}$ and $\| \cdot \|_{L^\infty}$, respectively.
\item For each closed $\alpha$-invariant subspace $\mathcal D_0 \subseteq \mathcal C_0$ there is a unique closed and $\alpha$-invariant corresponding subspace $\mathcal D_1 \subseteq \mathcal C_1$ and vice versa.
\end{enumerate}
\noindent For the unique correspondences of closed, $\alpha$-invariant subspaces in part (4) of the theorem we write $\mathcal D_0 \leftrightarrow \mathcal D_1$.
\end{thm}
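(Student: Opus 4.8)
The plan is to treat assertions (1)--(3) as the technical core and then assemble (4) from them, being careful about one circularity. Throughout, the key algebraic observation is that, by bilinearity of $\ast$ together with the ``swap'' built into the convolution (operator $\ast$ function $=$ operator, operator $\ast$ operator $=$ function), the defining relation $\mathcal N(F_t^p) \ast \mathcal D \subseteq \mathcal D$ of a pair is equivalent to the two inclusions $\mathcal N(F_t^p) \ast \mathcal D_0 \subseteq \mathcal D_1$ and $\mathcal N(F_t^p) \ast \mathcal D_1 \subseteq \mathcal D_0$. Assertion (1) is then immediate: for fixed $N \in \mathcal N(F_t^p)$ the map $x \mapsto N \ast x$ is bounded on $\mathcal A^\infty$ by Lemma~\ref{lemma_convolution_ainfty}(1), hence continuous, and maps $\mathcal D$ into $\mathcal D$; so it maps $\overline{\mathcal D}$ into $\overline{\mathcal D}$ (with $\overline{\mathcal D} = \overline{\mathcal D_0} \oplus \overline{\mathcal D_1}$, since $\mathcal A^\infty$ carries the max norm).

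For (2) and (3) I would exploit a single mechanism. Given $A \in \mathcal C_1$ we know from Lemma~\ref{Proposition_approximate_identity} that $f_s \ast A \to A$ in operator norm as $s \to 0$, and by Proposition~\ref{proposition_density} the Berezin transforms $\{\,\widetilde C = P_{\mathbb C} \ast C : C \in \mathcal N(F_t^p)\,\}$ are dense in $L^1(\mathbb C^n)$; so it suffices to control $\widetilde C \ast A$ for $C \in \mathcal N(F_t^p)$ with $\widetilde C$ close to $(\pi t)^n f_s$ in $L^1$. Now commutativity and associativity of the convolution (Lemmas~\ref{properties_convolution} and \ref{lemma_convolution_ainfty}) permit the crucial rebracketing $(P_{\mathbb C} \ast C) \ast A = P_{\mathbb C} \ast (C \ast A) = C \ast (P_{\mathbb C} \ast A) = C \ast \widetilde A$. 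For (2), read the middle term: if $A \in \mathcal D_1$ then $C \ast A \in \mathcal N(F_t^p) \ast \mathcal D_1 \subseteq \mathcal D_0$, so $\mathcal R_t \ast (C \ast A) \in \mathcal R_t \ast \mathcal D_0$ approximates $A$; since also $\mathcal R_t \ast \mathcal D_0 \subseteq \mathcal D_1 \cap \mathcal C_1$ (pair property together with Lemma~\ref{Lemma_continuity}), this is exactly the asserted density. For (3), read the last term: if $P_{\mathbb C} \ast A = \widetilde A \in \mathcal D_0$ then $C \ast \widetilde A \in \mathcal N(F_t^p) \ast \mathcal D_0 \subseteq \mathcal D_1$, so $A$ lies in the operator-norm closure of $\mathcal D_1$. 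The function-side statements in (2) and (3) are the mirror images, using $f_s \ast f \to f$ in $\|\cdot\|_{L^\infty}$ for $f \in \mathcal C_0 = \BUC(\mathbb C^n)$ and the rebracketing $(P_{\mathbb C} \ast N) \ast f = P_{\mathbb C} \ast (N \ast f) = N \ast (P_{\mathbb C} \ast f)$, together with $P_{\mathbb C} \ast f = (\pi t)^n T_f^t = (\pi t)^n \mathcal R_t \ast f$ from Proposition~\ref{Proposition_Toeplitz_identity}.

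For (4), given a closed $\alpha$-invariant $\mathcal D_0 \subseteq \mathcal C_0$ I would simply set $\mathcal D_1 := \overline{\mathcal R_t \ast \mathcal D_0}$ (operator-norm closure). That $\mathcal D_1 \subseteq \mathcal C_1$ follows from Lemma~\ref{Lemma_continuity} and the fact that $\mathcal C_1$ is norm-closed (e.g.\ by Proposition~\ref{proposition_characterizations_C_1}); closedness is built in; and $\alpha$-invariance holds because each $\alpha_z$ is isometric and commutes with $\mathcal R_t \ast (\cdot)$ by Lemma~\ref{lemma_convolution_ainfty}(3), while $\mathcal D_0$ is $\alpha$-invariant. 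The delicate point is verifying that $\mathcal D_0 \oplus \mathcal D_1$ is a pair; here one must \emph{not} invoke (3), whose hypothesis already presupposes a pair. Instead I would use that the Toeplitz operators $\{T_g^t = \mathcal R_t \ast g : g \in L^1(\mathbb C^n)\}$ are dense in $\mathcal N(F_t^p)$ (Proposition~\ref{proposition_density}) and that $L^1(\mathbb C^n) \ast \mathcal D_0 \subseteq \mathcal D_0$ (Lemma~\ref{Lemma_translation_invariant}): for $N \in \mathcal N(F_t^p)$ approximated by $T_{g_k}^t$ and $f \in \mathcal D_0$ one has $T_{g_k}^t \ast f = \mathcal R_t \ast (g_k \ast f) \in \mathcal R_t \ast \mathcal D_0$, whence $N \ast f \in \mathcal D_1$; and $N \ast (\mathcal R_t \ast f) = (N \ast \mathcal R_t) \ast f = \frac{1}{(\pi t)^n}\widetilde N \ast f \in \mathcal D_0$, whence (by continuity and closedness of $\mathcal D_0$) $\mathcal N(F_t^p) \ast \mathcal D_1 \subseteq \mathcal D_0$. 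The reverse assignment $\mathcal D_1 \mapsto \mathcal D_0 := \overline{P_{\mathbb C} \ast \mathcal D_1}$ (closure in $\|\cdot\|_{L^\infty}$) is entirely symmetric.

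Uniqueness in both directions then drops out of (2): any closed $\alpha$-invariant corresponding $\mathcal D_1' \subseteq \mathcal C_1$ satisfies $\mathcal D_1' = \mathcal D_1' \cap \mathcal C_1 = \overline{\mathcal R_t \ast \mathcal D_0} = \mathcal D_1$ (the middle equality by (2)), and dually on the function side. The main obstacle, as I see it, is not any single deep idea but making the approximation argument of (2)--(3) actually close up: it needs simultaneously an approximate identity acting by convolution (Lemma~\ref{Proposition_approximate_identity}), density of Berezin transforms in $L^1$ (Proposition~\ref{proposition_density}), and the full strength of the associativity/commutativity identities for the triple convolution on $\mathcal A^1 \times \mathcal A^1 \times \mathcal A^\infty$ (the genuinely technical part of Section~2.3 in the regime $p \neq 2$, where one cannot fall back on Hilbert-space adjoints). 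The secondary pitfall is the circular temptation in (4) to deduce the pair property from (3); routing through Toeplitz operators as above avoids it cleanly.
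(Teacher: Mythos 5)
Your proposal is correct, and parts (1)--(3) follow essentially the same route as the paper: (1) is the boundedness-of-$N\ast(\cdot)$ argument, (2) and (3) hinge on the approximate identity $f_s$, density of Berezin transforms in $L^1$, and the triple-convolution rebracketing $(P_{\mathbb C}\ast C)\ast A = P_{\mathbb C}\ast(C\ast A) = C\ast(P_{\mathbb C}\ast A)$, exactly as in the source. The one place you genuinely diverge is (4). The paper introduces two extremal candidates, $\mathcal D_1^- := \mathcal D_0 \ast \mathcal N(F_t^p)$ and $\mathcal D_1^+ := \{A \in \mathcal C_1 : A \ast \mathcal N(F_t^p) \subseteq \mathcal D_0\}$, shows $\mathcal D_0 \oplus \mathcal D_1^-$ is a pair via $L^1 \ast \mathcal D_0 \subseteq \mathcal D_0$ (Lemma~\ref{Lemma_translation_invariant}), observes that any candidate $\mathcal E_1$ with $\mathcal D_0 \oplus \mathcal E_1$ a pair is sandwiched between the two, and then applies (3) to the already-established pair $\mathcal D_0 \oplus \mathcal D_1^-$ to conclude $\mathcal D_1^+ \subseteq \overline{\mathcal D_1^-}$. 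You instead define $\mathcal D_1 := \overline{\mathcal R_t \ast \mathcal D_0}$ outright, verify the pair property by approximating $N \in \mathcal N(F_t^p)$ by Toeplitz operators $T_{g_k}^t$ and routing through $L^1 \ast \mathcal D_0 \subseteq \mathcal D_0$, and then get uniqueness from (2) rather than (3). Since $\overline{\mathcal R_t \ast \mathcal D_0} = \overline{\mathcal D_1^-}$ (by precisely your Toeplitz-density approximation), the two constructions produce the same object; your version is perhaps a bit leaner because it never needs $\mathcal D_1^+$. Your worry about circularity in invoking (3) is reasonable but, for the record, the paper is not actually circular there: it applies (3) to the pair $\mathcal D_0 \oplus \mathcal D_1^-$ after that pair has been independently verified. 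Both routes are sound.
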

Although the theorem was initially formulated only for the Hilbert space case, its proof carries over to our setting of $p$-Fock spaces unchanged. We present the initial proof from \cite{Werner1984}:
\begin{proof}
\begin{enumerate}[(1)]
\item Let $f \in \overline{\mathcal D_0}$, $f_j \in \mathcal D_0$ such that $f_j \to f$ and $A \in \mathcal N(F_t^p)$. Since $\mathcal D$ is a pair, $A \ast f_j \in \mathcal D_1$ for all $j$. By Lemma \ref{lemma_convolution_ainfty},
\[ \|A \ast (f - f_j)\|_{op} \leq \| A\|_{\mathcal N} \| f - f_j\|_{L^\infty} \to 0, \quad j \to \infty, \]
hence $A \ast f\in \overline{\mathcal D_1}$. $\mathcal N(F_t^p) \ast \overline{\mathcal D_1} \subset \overline{\mathcal D_0}$ follows analogously.
\item $\mathcal R_t \ast \mathcal D_0 \subseteq \mathcal D_1 \cap \mathcal C_1$ follows since $\mathcal D$ is a pair and by Lemma \ref{Lemma_continuity}. Let $A \in \mathcal D_1\cap \mathcal C_1$ be arbitrary and $\varepsilon > 0$. By Lemma \ref{Proposition_approximate_identity} there exists $h \in L^1(\mathbb C^n)$ such that $\| A - h \ast A\|_{op} < \varepsilon$. Further, by Proposition \ref{proposition_density} there exists $B \in \mathcal N(F_t^p)$ such that $\| h - \mathcal R_t \ast B \|_{L^1} < \varepsilon.$ Together, we obtain
\[ \| A - (\mathcal R_t \ast B) \ast A\|_{op} < \varepsilon(1+\| A\|_{op}). \]
Finally, since $\mathcal N(F_t^p) \ast \mathcal D_1 \subseteq \mathcal D_0$, it holds $B \ast A \in \mathcal D_0$. Density of $P_{\mathbb C} \ast \mathcal D_1$ in $\mathcal D_0$ follows analogously.
\item The reasoning of (2) with the assumption $P_{\mathbb C} \ast A \in \mathcal D_0$ instead of $A \in \mathcal D_1$ proves the first implication, the second follows analogously.
\item Assume $\mathcal D_0 \subseteq \BUC(\mathbb C^n)$ is closed and $\alpha$-invariant. Define the spaces
\begin{align*}
\mathcal D_1^{-} &:= \mathcal D_0 \ast \mathcal N(F_t^p) \subseteq \mathcal C_1 \\
\mathcal D_1^{+} &:= \{ A \in \mathcal C_1; ~A \ast B \in \mathcal D_0 \text{ for each } B \in \mathcal N(F_t^p) \}.
\end{align*}
By Lemma \ref{Lemma_translation_invariant} we have $L^1(\mathbb C^n) \ast \mathcal D_0 \subseteq \mathcal D_0$. $\mathcal D_0 \oplus \mathcal D_1^{-}$ is a pair since
\[ \mathcal N(F_t^p) \ast \mathcal D_1^{-}  = \mathcal N(F_t^p) \ast \mathcal N(F_t^p) \ast \mathcal D_0 \subseteq L^1(\mathbb C^n) \ast \mathcal D_0 \subset \mathcal D_0,\]
by Lemma \ref{Lemma_translation_invariant}. Further, by definition we have $\mathcal N(F_t^p) \ast \mathcal D_1^{+} \subseteq \mathcal D_0$. One easily checks $\mathcal D_1^{-} \subseteq \mathcal D_1^{+}$, hence $\mathcal D_0 \oplus \mathcal D_1^{+}$ is also a pair. If $\mathcal E_1 \subseteq \mathcal L(F_t^p)$ is any other subspace such that $\mathcal D_0 \oplus \mathcal E_1$ is a pair, then
\[ \mathcal D_1^{-} \subseteq \mathcal E_1 \subseteq \mathcal D_1^{+}. \]
Let $A \in \mathcal D_1^{+}$. Then, part (3) of the theorem applied to the pair $\mathcal D_0 \oplus \mathcal D_1^{-}$ yields $A \in \overline{D_1^{-}}$, which proves the result.

The other direction of the correspondence is proven analogously.
\end{enumerate}
\end{proof}
We want to note that there is an analogous correspondence theory for pairs $\mathcal D = \mathcal D_0 \oplus \mathcal D_1 \subseteq \mathcal A^{p_0}$, $1 \leq p_p < \infty$, with identical statements and proofs (up to the obvious changes). Further, Werner formulated his version of the above theorem using the notion of \emph{regular operators}, which we did not introduce. Our operators $P_{\mathbb C}$ and $\mathcal R_t$ are such regular operators. We will not dwell on this and refer to Werner's original work \cite{Werner1984}.

\section{Applications to Toeplitz algebras}\label{section_toeplitz_operators}
Let $S \subseteq L^\infty(\mathbb C^n)$. Then, we denote by $\mathcal T^{p,t}(S) \subset \mathcal L(F_t^p)$ the Banach algebra generated by all Toeplitz operators with symbols in $S$. Let us denote by $\mathcal T^{p,t} := \mathcal T^{p,t}(L^\infty(\mathbb C^n))$ the full Toeplitz algebra. By $\mathcal T_{lin}^{p,t}(S) \subset \mathcal L(F_t^p)$ we denote the closed linear span of Toeplitz operators with symbols in $S \subset L^\infty(\mathbb C^n)$. Finally, in the case of $p = 2$ we will use $\mathcal T_{\ast}^{2,t}(S)\subset \mathcal L(F_t^2)$ for the $C^\ast$ algebra generated by Toeplitz operators with symbols in $S$.
The following result is an immediate consequence of Proposition \ref{proposition_characterizations_C_1}:
\begin{thm}\label{Toeplitz_algebra_linearly_generated}
We have
\begin{align*}
\mathcal T^{p,t} &= \mathcal T_{lin}^{p,t}(\BUC(\mathbb C^n)) \\
&=  \{ A \in \mathcal L(F_t^p); ~ z \mapsto W_z A W_{-z} \text{ is continuous w.r.t. the operator norm}\}\\
&= \{ A \in \mathcal L(F_t^p); ~ f_s \ast A \to A \text{ in operator norm as } s \to 0 \}\\
&= \{ g \ast A; ~ g \in L^1(\mathbb C^n), ~ A \in \mathcal C_1\}.
\end{align*}
\end{thm}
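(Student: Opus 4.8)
The plan is to reduce the entire chain of equalities to the single identity $\mathcal T^{p,t} = \mathcal C_1$. Indeed, among the four descriptions in the statement, the second one (operator-norm continuity of $z \mapsto W_z A W_{-z} = \alpha_z(A)$) is literally the definition of $\mathcal C_1$, while the third ($f_s \ast A \to A$ in operator norm) and the fourth ($\{g \ast A : g \in L^1(\mathbb C^n),\ A \in \mathcal C_1\}$) are exactly two of the equalities in Proposition \ref{proposition_characterizations_C_1}. So once I have shown $\mathcal T^{p,t} = \mathcal C_1$ and $\mathcal T_{lin}^{p,t}(\BUC(\mathbb C^n)) = \mathcal C_1$, nothing further is needed.

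First I would establish $\mathcal C_1 = \mathcal T_{lin}^{p,t}(\BUC(\mathbb C^n))$. By Proposition \ref{Proposition_Toeplitz_identity} we have $\mathcal R_t \ast f = T_f^t$ for every $f \in L^\infty(\mathbb C^n)$, and $f \mapsto \mathcal R_t \ast f$ is linear; hence $\mathcal R_t \ast \BUC(\mathbb C^n) = \{T_f^t : f \in \BUC(\mathbb C^n)\}$ is already a linear subspace of $\mathcal L(F_t^p)$, so its operator-norm closure is precisely $\mathcal T_{lin}^{p,t}(\BUC(\mathbb C^n))$. On the other hand, Proposition \ref{proposition_characterizations_C_1} identifies this closure with $\mathcal C_1$. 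This step also yields one half of the first claimed equality, since $\BUC(\mathbb C^n) \subseteq L^\infty(\mathbb C^n)$ and $\mathcal T^{p,t}$ is a closed linear space containing every such Toeplitz operator, whence $\mathcal T_{lin}^{p,t}(\BUC(\mathbb C^n)) \subseteq \mathcal T^{p,t}$.

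For the reverse inclusion $\mathcal T^{p,t} \subseteq \mathcal C_1$ I would use two facts. First, $\mathcal C_1$ is a closed subalgebra of $\mathcal L(F_t^p)$: it is norm-closed because a uniform limit of operator-norm-continuous maps $z \mapsto \alpha_z(A_n)$ is again operator-norm continuous (using $\| \alpha_z(A)\|_{op} = \|A\|_{op}$), and it is multiplicatively closed because $\alpha_z$ is an algebra homomorphism, so $z \mapsto \alpha_z(AB) = \alpha_z(A)\alpha_z(B)$ is continuous whenever $z \mapsto \alpha_z(A)$ and $z \mapsto \alpha_z(B)$ are; this is exactly the assertion, already recorded after the definition of $\mathcal C$, that $\mathcal C_1$ is a Banach algebra. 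Second, every Toeplitz operator with $L^\infty$ symbol lies in $\mathcal C_1$: indeed $T_f^t = \mathcal R_t \ast f$ by Proposition \ref{Proposition_Toeplitz_identity}, with $\mathcal R_t \in \mathcal N(F_t^p) \subseteq \mathcal A^1$ and $f \in L^\infty(\mathbb C^n) \subseteq \mathcal A^\infty$, so Lemma \ref{Lemma_continuity} gives $\mathcal R_t \ast f \in \mathcal C_1$. Since $\mathcal T^{p,t}$ is by definition the smallest closed subalgebra of $\mathcal L(F_t^p)$ containing all $T_f^t$, $f \in L^\infty(\mathbb C^n)$, it follows that $\mathcal T^{p,t} \subseteq \mathcal C_1$.

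Putting these together gives $\mathcal T^{p,t} \subseteq \mathcal C_1 = \mathcal T_{lin}^{p,t}(\BUC(\mathbb C^n)) \subseteq \mathcal T^{p,t}$, so all three coincide, and the last two descriptions in the statement are read off directly from Proposition \ref{proposition_characterizations_C_1}. I do not expect a genuine obstacle: the only points requiring an argument rather than a citation are that $\mathcal C_1$ is really multiplicatively closed and norm-closed (i.e.\ a Banach algebra, not just a Banach subspace) and the routine observation that the Banach algebra generated by the Toeplitz operators means the closed algebraic span; everything else is a direct application of Propositions \ref{Proposition_Toeplitz_identity} and \ref{proposition_characterizations_C_1} and of Lemma \ref{Lemma_continuity}.
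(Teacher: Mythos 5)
Your proposal is correct and takes essentially the same route as the paper's own proof: both reduce the chain of equalities to the single identity $\mathcal C_1 = \mathcal T_{lin}^{p,t}(\BUC(\mathbb C^n)) = \mathcal T^{p,t}$, using Proposition \ref{proposition_characterizations_C_1} for $\mathcal C_1 = \overline{\mathcal R_t \ast \BUC(\mathbb C^n)}$, Proposition \ref{Proposition_Toeplitz_identity} to identify $\mathcal R_t \ast f$ with $T_f^t$, Lemma \ref{Lemma_continuity} to place every $T_f^t$ ($f \in L^\infty$) in $\mathcal C_1$, and the fact that $\mathcal C_1$ is a closed subalgebra to conclude $\mathcal T^{p,t} \subseteq \mathcal C_1$. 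The only difference is that you spell out the verification that $\mathcal C_1$ is a Banach algebra, which the paper relies on from its earlier remark rather than reproving.
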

\begin{proof}
Using $\mathcal R_t \ast f = T_f^t$ for every $f \in L^\infty(\mathbb C^n)$, Proposition \ref{proposition_characterizations_C_1} gives 
\begin{equation*}
\mathcal C_1 = \overline{\mathcal R_t \ast \BUC(\mathbb C^n)} = \mathcal T_{lin}^{p,t}(\BUC(\mathbb C^n)).
\end{equation*}
Since every convolution between an operator from $\mathcal N(F_t^p)$ and a function from $L^\infty(\mathbb C^n)$ is in $\mathcal C_1$ by Lemma \ref{Lemma_continuity}, we obtain $T_f^t = \mathcal R_t \ast f \in \mathcal C_1$ for every $f \in L^\infty(\mathbb C^n)$. Using that $\mathcal C_1$ is a Banach algebra, we therefore get $\mathcal T^{p,t} \subseteq \mathcal C_1$. Finally, the inclusion $\mathcal T_{lin}^{p,t}(\operatorname{BUC}(\mathbb C^n)) \subseteq \mathcal T^{p,t}$ is trivial.
\end{proof}
\begin{rem}
The result $\mathcal T^{2,1} = \mathcal T_{lin}^{2,1}(L^\infty(\mathbb C^n))$ was obtained by J. Xia in \cite{Xia}. While his methods works for any $t > 0$, the assumption $p = 2$ was crucial in an important step of his proof. Hence, the above theorem improves that result.
\end{rem}
Theorem \ref{Toeplitz_algebra_linearly_generated} tells us: For each $A \in \mathcal T^{p,t}$ there is a sequence of Toeplitz operators $T_{f_j}^t$ such that $T_{f_j}^t \to A$ in operator norm. So far, we have no information how the symbols $f_j$ are related to the operator $A$. A careful investigation of the underlying theory can give us the answer. Recall that convolution by 
\[ f_s(z) = \frac{1}{(\pi s)^n} e^{-\frac{|z|^2}{s}} \]
is an approximate identity in $\mathcal C$. Let $N \in \mathbb N$. Since the span of functions of the form 
\[ \alpha_z(f_t) = \alpha_z(\mathcal R_t \ast P_{\mathbb C}) = \mathcal R_t \ast (\alpha_z(P_{\mathbb C})) \]
is dense in $L^1(\mathbb C^n)$ by Wiener's Theorem \cite[Theorem 9.5]{Rudin1991}, it follows that there are constants $c_j^N$ and points $z_j^N$ such that
\begin{equation}\label{equation_approximation}
\left \| f_{t/N} - \mathcal R_t \ast \left(\sum_{j=1}^{M_N} c_j^N \alpha_{z_j^N}(P_{\mathbb C}) \right) \right \|_{L^1} \leq \frac{1}{N}.
\end{equation}
Then, by the usual norm estimates for convolutions,
\begin{align*}
\left \| A - T_{\sum_{j=1}^{M_N} c_j^N \alpha_{z_j^N}(\widetilde{A})}^t \right \|_{op} &=\left \| A - \mathcal R_t \ast \left( \sum_{j=1}^{M_N} c_j^N \alpha_{z_j^N}(\widetilde{A}) \right) \right \|_{op}\\
&= \left \| A - \mathcal R_t \ast \left( \sum_{j=1}^{M_N} c_j^N \alpha_{z_j^N}(P_\mathbb{C}) \right) \ast A\right \|_{op}\\
&\leq \left \| A - f_{t/N} \ast A \right \|_{op}\\
&\quad  + \left \| f_{t/N} \ast A - \mathcal R_t \ast \left( \sum_{j=1}^{M_N} c_j^N \alpha_{z_j^N}(P_\mathbb{C}) \right) \ast A \right \|_{op}\\
&\leq \left \| A - f_{t/N} \ast A \right \|_{op} + C\| A\|_{op} \frac{1}{N}\\
&\to 0, \quad N \to \infty.
\end{align*}
Hence, we can approximate the operator $A$ by a sequence of Toeplitz operators, where each Toeplitz operator has a weighted version of the Berezin transform of $A$ as its symbol.

The following version of Werner's Correspondence Theorem \ref{Theorem_correspondence_theory} in the Toeplitz operator setting gives a different perspective on Theorem \ref{Toeplitz_algebra_linearly_generated}.
\begin{prop}\label{main_result_correspondence}
Let $\mathcal D_0 \subseteq \BUC(\mathbb C^n)$ be a closed $\alpha$-invariant subspace. Then, we have 
\[ \mathcal D_0 \leftrightarrow \mathcal T_{lin}^{p,t}(\mathcal D_0). \]
\end{prop}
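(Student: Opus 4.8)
The plan is to show that $\mathcal D_0 \oplus \mathcal T_{lin}^{p,t}(\mathcal D_0)$ is a pair in the sense of the Correspondence Theory, that $\mathcal T_{lin}^{p,t}(\mathcal D_0)$ is closed and $\alpha$-invariant, and then invoke the uniqueness statement in Theorem \ref{Theorem_correspondence_theory}(4). Closedness is built into the definition of $\mathcal T_{lin}^{p,t}(\mathcal D_0)$. For $\alpha$-invariance, I would use the identity $\alpha_z(T_f^t) = \alpha_z(\mathcal R_t \ast f) = \mathcal R_t \ast \alpha_z(f) = T_{\alpha_z(f)}^t$ from Lemma \ref{lemma_convolution_ainfty}(3) together with $\alpha$-invariance of $\mathcal D_0$: this shows $\alpha_z$ maps the generating Toeplitz operators into $\mathcal T_{lin}^{p,t}(\mathcal D_0)$, and since $\alpha_z$ is an isometric linear bijection (as $W_z$ is an isometric isomorphism), it maps the closed linear span into itself.

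The main point is to verify the pair condition $\mathcal N(F_t^p) \ast \mathcal T_{lin}^{p,t}(\mathcal D_0) \subseteq \mathcal T_{lin}^{p,t}(\mathcal D_0)$ and to identify $\mathcal T_{lin}^{p,t}(\mathcal D_0)$ as the correspondent. First, $\mathcal T_{lin}^{p,t}(\mathcal D_0) = \overline{\mathcal R_t \ast \mathcal D_0}$, since $T_f^t = \mathcal R_t \ast f$ for all $f \in L^\infty(\mathbb C^n)$ by Proposition \ref{Proposition_Toeplitz_identity}, and the closed linear span of $\{\mathcal R_t \ast f : f \in \mathcal D_0\}$ equals $\overline{\mathcal R_t \ast \mathcal D_0}$ because $f \mapsto \mathcal R_t \ast f$ is linear. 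Now take $B \in \mathcal N(F_t^p)$ and $f \in \mathcal D_0$; by associativity (Lemma \ref{lemma_convolution_ainfty}(2) and Lemma \ref{properties_convolution}) we have $B \ast (\mathcal R_t \ast f) = (B \ast \mathcal R_t) \ast f = \mathcal R_t \ast (B \ast f)$, using commutativity of convolution on $\mathcal A^1$. By Lemma \ref{Lemma_translation_invariant} applied to the closed $\alpha$-invariant subspace $\mathcal D_0 \subseteq \mathcal C$ (here $\mathcal D_0 \oplus \{0\}$, or simply noting $\mathcal N(F_t^p) \ast f \subseteq L^1(\mathbb C^n) \ast f$), we get $B \ast f \in \mathcal D_0$ — wait, more carefully: $B \ast f$ is a function, and $B \ast \mathcal D_0 \subseteq L^1(\mathbb C^n) \ast \mathcal D_0 \subseteq \mathcal D_0$ follows since $\mathcal D_0$ is closed and $\alpha$-invariant. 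Hence $B \ast (\mathcal R_t \ast f) = \mathcal R_t \ast (B \ast f) \in \mathcal R_t \ast \mathcal D_0 \subseteq \mathcal T_{lin}^{p,t}(\mathcal D_0)$. Taking closed linear spans and using continuity of $A \mapsto B \ast A$ (Lemma \ref{lemma_convolution_ainfty}(1)) gives $\mathcal N(F_t^p) \ast \mathcal T_{lin}^{p,t}(\mathcal D_0) \subseteq \mathcal T_{lin}^{p,t}(\mathcal D_0)$, so the pair condition holds.

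Finally, it remains to confirm that $\mathcal D_0$ is indeed the correspondent of $\mathcal T_{lin}^{p,t}(\mathcal D_0)$, i.e.\ that under the pair $\mathcal D_0 \oplus \mathcal T_{lin}^{p,t}(\mathcal D_0)$ the ``$\mathcal D_0$ slot'' is recovered as $P_{\mathbb C} \ast \mathcal T_{lin}^{p,t}(\mathcal D_0)$ closed, or simply appeal to uniqueness in Theorem \ref{Theorem_correspondence_theory}(4): since $\mathcal D_0 \subseteq \BUC(\mathbb C^n) = \mathcal C_0$ is closed and $\alpha$-invariant, it has a unique closed $\alpha$-invariant correspondent $\mathcal D_1 \subseteq \mathcal C_1$; we have exhibited a closed $\alpha$-invariant $\mathcal E_1 := \mathcal T_{lin}^{p,t}(\mathcal D_0) \subseteq \mathcal C_1$ (the inclusion into $\mathcal C_1$ holds because $\mathcal R_t \ast f \in \mathcal C_1$ for every $f \in L^\infty(\mathbb C^n)$ by Lemma \ref{Lemma_continuity} and $\mathcal C_1$ is closed) with $\mathcal D_0 \oplus \mathcal E_1$ a pair, so $\mathcal E_1 = \mathcal D_1$. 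I expect the only mild subtlety to be bookkeeping the associativity and commutativity identities across the different convolution classes ($\mathcal A^1 \times \mathcal A^1$ versus $\mathcal A^1 \times \mathcal A^\infty$), but these are all covered by Lemmas \ref{properties_convolution} and \ref{lemma_convolution_ainfty}.
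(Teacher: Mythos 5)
Your overall strategy — exhibit $\mathcal D_0 \oplus \mathcal T_{lin}^{p,t}(\mathcal D_0)$ as a pair with $\mathcal T_{lin}^{p,t}(\mathcal D_0) \subseteq \mathcal C_1$ closed and $\alpha$-invariant, then invoke the uniqueness in Theorem~\ref{Theorem_correspondence_theory}(4) — is valid in principle and differs from the paper's shorter route (which simply applies Theorem~\ref{Theorem_correspondence_theory}(2) to the pair $\mathcal D_0 \oplus \mathcal D_1$ to see that $\mathcal R_t \ast \mathcal D_0$ is already dense in the correspondent $\mathcal D_1$). However, as written your verification of the pair condition contains a genuine error and is also incomplete.

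First, you misstate the pair condition. Since $\mathcal D = \mathcal D_0 \oplus \mathcal D_1$ with $\mathcal D_0$ the function slot and $\mathcal D_1$ the operator slot, the requirement $\mathcal N(F_t^p) \ast \mathcal D \subseteq \mathcal D$ decomposes (by the type structure of the convolution) into \emph{two cross-type} inclusions: $\mathcal N(F_t^p) \ast \mathcal D_0 \subseteq \mathcal D_1$ (trace-class $\ast$ function gives an operator) and $\mathcal N(F_t^p) \ast \mathcal D_1 \subseteq \mathcal D_0$ (trace-class $\ast$ operator gives a function). The inclusion you write, ``$\mathcal N(F_t^p) \ast \mathcal T_{lin}^{p,t}(\mathcal D_0) \subseteq \mathcal T_{lin}^{p,t}(\mathcal D_0)$'', is not type-consistent: the left side is a set of \emph{functions}, the right a set of \emph{operators}. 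Relatedly, your claim that ``$B \ast f$ is a function'' for $B \in \mathcal N(F_t^p)$ and $f \in \mathcal D_0 \subseteq L^\infty$ is false: by the paper's definitions, $f \ast B$ is an element of $\mathcal L(F_t^p)$, so the step ``$B \ast \mathcal D_0 \subseteq L^1(\mathbb C^n) \ast \mathcal D_0 \subseteq \mathcal D_0$'' and the conclusion ``$\mathcal R_t \ast (B \ast f) \in \mathcal R_t \ast \mathcal D_0$'' do not make sense.

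What your algebraic manipulation actually yields (and this is the correct half of the argument) is: for $B \in \mathcal N(F_t^p)$ and $f \in \mathcal D_0$, $B \ast T_f^t = B \ast (\mathcal R_t \ast f) = (B \ast \mathcal R_t) \ast f$, and since $B \ast \mathcal R_t \in L^1(\mathbb C^n)$, Lemma~\ref{Lemma_translation_invariant} gives $B \ast T_f^t \in L^1(\mathbb C^n) \ast \mathcal D_0 \subseteq \mathcal D_0$; this extends to all of $\mathcal T_{lin}^{p,t}(\mathcal D_0)$ by continuity of convolution, establishing $\mathcal N(F_t^p) \ast \mathcal T_{lin}^{p,t}(\mathcal D_0) \subseteq \mathcal D_0$. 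The other half, $\mathcal N(F_t^p) \ast \mathcal D_0 \subseteq \mathcal T_{lin}^{p,t}(\mathcal D_0)$, you do not address. It requires an extra density argument: for $B \in \mathcal N(F_t^p)$, approximate $B$ in $\mathcal N$-norm by $\mathcal R_t \ast g_j$ with $g_j \in L^1(\mathbb C^n)$ (Proposition~\ref{proposition_density}), then $(\mathcal R_t \ast g_j) \ast f = \mathcal R_t \ast (g_j \ast f) \in \mathcal R_t \ast \mathcal D_0$ since $g_j \ast f \in \mathcal D_0$, and pass to the limit using closedness of $\mathcal T_{lin}^{p,t}(\mathcal D_0)$. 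Only after both halves are in place does the appeal to uniqueness in part (4) close the argument.
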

\begin{proof}
By Proposition \ref{Proposition_Toeplitz_identity}, $\mathcal R_t \ast f= T_f^t$ for $f \in \mathcal D_0$. By Theorem \ref{Theorem_correspondence_theory}(2), the set of these operators is dense in the closed and $\alpha$-invariant subspace of $\mathcal C_1$ corresponding to $\mathcal D_0$.
\end{proof}
\begin{rem}
It is important to note and easy to prove that $\mathcal D_0$ is invariant under the operator $U$ if and only if $\mathcal T_{lin}^{p,t}(\mathcal D_0)$ is invariant under adjoining $U$ from left and right, i.e. for $A \in \mathcal T_{lin}^{p,t}(\mathcal D_0)$ we have $UAU \in \mathcal T_{lin}^{p,t}(\mathcal D_0)$. Further, the correspondence is ordered by inclusion: If $\mathcal E_0 \subseteq \BUC(\mathbb C^n)$ is also closed and $\alpha$-invariant, then $\mathcal D_0 \subseteq \mathcal E_0$ if and only if $\mathcal T_{lin}^{p,t}(\mathcal D_0) \subseteq \mathcal T_{lin}^{p,t}(\mathcal E_0)$.
\end{rem}
As a simple application of this correspondence, we obtain the following result. Part (2) is well know, see e.g. \cite{Berger_Coburn1994} for the Hilbert space case. Part (3) was the essential result in \cite{Bauer_Isralowitz2012}, yet our proof is now significantly shorter. Observe that part (1) was also mentioned in \cite{Werner1984} in the Schr\"odinger representation.
\begin{prop}\label{proposition_classification_compact}\begin{enumerate}[(1)]
\item We have
\begin{align*}
C_0(\mathbb C^n) \leftrightarrow \mathcal K(F_t^p).
\end{align*}
\item The full ideal of compact operators is generated by Toeplitz operators, i.e. 
\[ \mathcal K(F_t^p) = \overline{\{T_f^t; ~f \in C_0(\mathbb C^n)\}}^{\| \cdot\|_{op}}. \]
\item An operator $A \in \mathcal L(F_t^p)$ is compact if and only if $A \in \mathcal T^{p,t}$ and $\widetilde{A} \in C_0(\mathbb C^n)$.
\end{enumerate}
\end{prop}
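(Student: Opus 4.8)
The plan is to derive all three statements from the correspondence theory, the one substantial task being to identify the operator space corresponding to $C_0(\mathbb{C}^n)$. Since $C_0(\mathbb{C}^n)$ is a closed, $\alpha$-invariant subspace of $\BUC(\mathbb{C}^n) = \mathcal{C}_0$, Proposition \ref{main_result_correspondence} gives $C_0(\mathbb{C}^n) \leftrightarrow \mathcal{T}_{lin}^{p,t}(C_0(\mathbb{C}^n))$. Thus (1) is equivalent to the identity $\mathcal{T}_{lin}^{p,t}(C_0(\mathbb{C}^n)) = \mathcal{K}(F_t^p)$, which in turn is just a reformulation of (2) (the Toeplitz operators with symbols in $C_0(\mathbb{C}^n)$ already span a linear space). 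So I would prove (1) and (2) simultaneously by establishing this identity.

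For ``$\subseteq$'': if $g \in C_c(\mathbb{C}^n) \subseteq L^1(\mathbb{C}^n)$ then $T_g^t = \mathcal{R}_t \ast g \in \mathcal{N}(F_t^p) \subseteq \mathcal{K}(F_t^p)$; for arbitrary $g \in C_0(\mathbb{C}^n)$, approximating by $g_j \in C_c(\mathbb{C}^n)$ in $\|\cdot\|_{L^\infty}$ gives $\|T^t_{g_j} - T^t_g\|_{op} = \|\mathcal{R}_t \ast (g_j - g)\|_{op} \to 0$, so $T^t_g \in \mathcal{K}(F_t^p)$, and passing to closed linear spans gives $\mathcal{T}_{lin}^{p,t}(C_0(\mathbb{C}^n)) \subseteq \mathcal{K}(F_t^p)$. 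For ``$\supseteq$'': since $F_t^p$ has the approximation property (Proposition \ref{Proposition_reflexivity}), $\mathcal{K}(F_t^p) = \overline{\mathcal{N}(F_t^p)}^{op}$, and by Proposition \ref{proposition_density} the set $\{T_f^t : f \in L^1(\mathbb{C}^n)\}$ is dense in $\mathcal{N}(F_t^p)$; hence it suffices to approximate each $T_f^t$ ($f \in L^1(\mathbb{C}^n)$) in operator norm by Toeplitz operators with $C_0$-symbols. The key small observation is that the mollification $f_s \ast f$ belongs to $C_0(\mathbb{C}^n)$ for every $f \in L^1(\mathbb{C}^n)$ (continuity is immediate; vanishing at infinity follows by dominated convergence from $f_s \to 0$ locally uniformly), while $f_s \ast f \to f$ in $L^1$; consequently $T^t_{f_s \ast f} = \mathcal{R}_t \ast (f_s \ast f) \to \mathcal{R}_t \ast f = T_f^t$ in nuclear norm, in particular in operator norm. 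This yields $\mathcal{N}(F_t^p) \subseteq \mathcal{T}_{lin}^{p,t}(C_0(\mathbb{C}^n))$, and taking closures the required inclusion.

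For (3): if $A$ is compact then $A \in \mathcal{K}(F_t^p) = \overline{\mathcal{N}(F_t^p)}^{op} \subseteq \mathcal{C}_1 = \mathcal{T}^{p,t}$, where $\mathcal{N}(F_t^p) \subseteq \mathcal{C}_1$ by Lemma \ref{lemma_continuity_shifts}(1) and $\mathcal{C}_1 = \mathcal{T}^{p,t}$ by Theorem \ref{Toeplitz_algebra_linearly_generated}; moreover $\widetilde{A} = P_{\mathbb{C}} \ast A \in \mathcal{N}(F_t^p) \ast \mathcal{K}(F_t^p) \subseteq C_0(\mathbb{C}^n)$, the last inclusion holding because $C_0(\mathbb{C}^n) \oplus \mathcal{K}(F_t^p)$ is a pair by (1) (alternatively, $k_z^t \rightharpoonup 0$ in $F_t^p$ as $z \to \infty$, so $A k_z^t \to 0$ in norm and $\widetilde{A}(z) = \langle A k_z^t, k_z^t\rangle_{F_t^2} \to 0$). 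Conversely, if $A \in \mathcal{T}^{p,t} = \mathcal{C}_1$ and $\widetilde{A} = P_{\mathbb{C}} \ast A \in C_0(\mathbb{C}^n)$, then Theorem \ref{Theorem_correspondence_theory}(3), applied to the pair $C_0(\mathbb{C}^n) \oplus \mathcal{K}(F_t^p)$, gives $A \in \overline{\mathcal{K}(F_t^p)}^{op} = \mathcal{K}(F_t^p)$.

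The only point requiring a genuine idea is the inclusion $\mathcal{K}(F_t^p) \subseteq \mathcal{T}_{lin}^{p,t}(C_0(\mathbb{C}^n))$ — realizing every compact operator as an operator-norm limit of Toeplitz operators with decaying symbols — where the trick is the twofold use of Gaussian mollification; the rest is bookkeeping with the convolution calculus and Theorem \ref{Theorem_correspondence_theory}. A more self-contained alternative for (1) would be to verify directly that $C_0(\mathbb{C}^n) \oplus \mathcal{K}(F_t^p)$ is a pair: the half $\mathcal{N}(F_t^p) \ast C_0(\mathbb{C}^n) \subseteq \mathcal{K}(F_t^p)$ follows from $C_c$-approximation, and the half $\mathcal{N}(F_t^p) \ast \mathcal{K}(F_t^p) \subseteq C_0(\mathbb{C}^n)$ reduces (by density of finite-rank operators) to checking that $z \mapsto \langle W_z U x, y\rangle_{F_t^2}$ vanishes at infinity, which it does because of the Gaussian factor $e^{-|z|^2/(2t)}$ in $k_z^t$; uniqueness in Theorem \ref{Theorem_correspondence_theory}(4) then finishes the argument.
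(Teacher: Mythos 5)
Your proposal is correct, and it deviates from the paper's proof in an interesting way for the key inclusion $\mathcal K(F_t^p) \subseteq \mathcal T_{lin}^{p,t}(C_0(\mathbb C^n))$. The paper gets this by a ``sandwich'' argument: $C_0 \leftrightarrow \mathcal T_{lin}^{p,t}(C_0)\subseteq\mathcal K(F_t^p)$ on one side, and $\mathcal K(F_t^p)\leftrightarrow \mathcal D_0 \subseteq C_0$ on the other (the latter resting on the ``well-known'' fact that $\widetilde{A}\in C_0(\mathbb C^n)$ when $A$ is compact), then invoking that the correspondence is ordered by inclusion. You instead prove the inclusion constructively: you push $\{T_f^t: f\in L^1\}$ (dense in $\mathcal N(F_t^p)$ by Proposition~\ref{proposition_density}) inside $\mathcal T_{lin}^{p,t}(C_0)$ by mollifying $f \mapsto f_s\ast f \in C_0(\mathbb C^n)$ and observing $T_{f_s\ast f}^t \to T_f^t$ in $\mathcal N$-norm, then take operator-norm closures using the approximation property. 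This makes the argument more self-contained (it does not invoke the external fact about Berezin transforms of compacts for (1)--(2), although you do need it, or the pair property you derive, for the ``easy'' direction of (3)). Both routes are legitimate; the paper's is shorter by leaning on the abstract machinery of Theorem~\ref{Theorem_correspondence_theory}(4) plus a cited fact, while yours exhibits the approximating Toeplitz operators explicitly. Part (3) you handle exactly as the paper does, via Theorem~\ref{Theorem_correspondence_theory}(3) applied to the pair $C_0(\mathbb C^n)\oplus\mathcal K(F_t^p)$.
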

\begin{proof}
Since $C_0(\mathbb C^n)$ is closed and translation invariant, we have
\[ C_0(\mathbb C^n) \leftrightarrow \mathcal T_{lin}(C_0(\mathbb C^n)) \subseteq \mathcal K(F_t^p). \]
Further, $\mathcal K(F_t^p) \subset \mathcal T^{p,t}$ is also closed and $\alpha$-invariant. For $A \in \mathcal K(F_t^p)$ it is well-known that $ \widetilde{A} \in C_0(\mathbb C^n)$, hence
\[ \mathcal K(F_t^p) \leftrightarrow \mathcal D_0 \subseteq C_0(\mathbb C^n) \]
which proves the correspondence. The second result is a direct consequence of Proposition \ref{main_result_correspondence}. The third result now follows from this correspondence: If $A \in \mathcal T^{p,t} = \mathcal C_1$ and $\widetilde{A} = P_{\mathbb C} \ast A \in C_0(\mathbb C^n)$, then Theorem \ref{Theorem_correspondence_theory}(3) applied to the pair $C_0(\mathbb C^n) \oplus \mathcal K(F_t^p)$ yields $A \in \mathcal K(F_t^p)$. The other direction (i.e. $A \in \mathcal K(F_t^p) \Rightarrow A \in \mathcal T^{p,t}$ and $\widetilde{A} \in C_0(\mathbb C^n)$) is immediate.
\end{proof}

\subsection{Linearly generated Toeplitz algebras}
If one is thinking about possible generalizations of Theorem \ref{Toeplitz_algebra_linearly_generated}, one could ask if a given Banach algebra of Toeplitz operators is linearly generated by some set of functions. If the algebra is $\alpha$-invariant, the answer to this is positive. The following statement is in a sense dual to Proposition \ref{main_result_correspondence}.
\begin{thm}\label{theorem_linearly_generated}
Let $\mathcal S \subseteq \mathcal T^{p,t}$ be an $\alpha$-invariant closed subspace (not necessarily an algebra). Then, there is some closed and $\alpha$-invariant subspace $\mathcal D_0 \subseteq \BUC(\mathbb C^n)$ such that
\[ \mathcal S = \mathcal T_{lin}^{p,t}(\mathcal D_0). \]
Further, $\mathcal D_0$ is given by $\closedspan\{ \widetilde{A}; ~ A \in \mathcal S\}$.
\end{thm}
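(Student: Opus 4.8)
\textit{Proof proposal.} The plan is to recognize $\mathcal S$ as a closed, $\alpha$-invariant subspace of $\mathcal C_1$ and then read off everything from Werner's Correspondence Theorem and Proposition~\ref{main_result_correspondence}. Concretely: by Theorem~\ref{Toeplitz_algebra_linearly_generated} we have $\mathcal T^{p,t} = \mathcal C_1$, so the hypothesis says precisely that $\mathcal S$ is a closed and $\alpha$-invariant subspace of $\mathcal C_1$. Theorem~\ref{Theorem_correspondence_theory}(4) then supplies a unique closed, $\alpha$-invariant subspace $\mathcal D_0 \subseteq \mathcal C_0 = \BUC(\mathbb C^n)$ with $\mathcal D_0 \leftrightarrow \mathcal S$, i.e.\ such that $\mathcal D := \mathcal D_0 \oplus \mathcal S$ is a pair.

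Next I would apply Proposition~\ref{main_result_correspondence} to this $\mathcal D_0$, which gives $\mathcal D_0 \leftrightarrow \mathcal T_{lin}^{p,t}(\mathcal D_0)$. Since the correspondence between closed, $\alpha$-invariant subspaces of $\mathcal C_0$ and of $\mathcal C_1$ is a bijection (again Theorem~\ref{Theorem_correspondence_theory}(4)), the subspace of $\mathcal C_1$ corresponding to $\mathcal D_0$ is unique; as both $\mathcal S$ and $\mathcal T_{lin}^{p,t}(\mathcal D_0)$ play that role, we conclude $\mathcal S = \mathcal T_{lin}^{p,t}(\mathcal D_0)$, which is the first assertion.

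It remains to identify $\mathcal D_0$. For one inclusion: since $\mathcal D_0 \oplus \mathcal S$ is a pair and $P_{\mathbb C} \in \mathcal N(F_t^p)$, we have $\widetilde A = P_{\mathbb C} \ast A \in \mathcal N(F_t^p) \ast \mathcal S \subseteq \mathcal D_0$ for every $A \in \mathcal S$; as $\mathcal D_0$ is a closed subspace, $\closedspan\{\widetilde A;\ A \in \mathcal S\} \subseteq \mathcal D_0$. For the reverse inclusion, Theorem~\ref{Theorem_correspondence_theory}(2) asserts that $P_{\mathbb C} \ast \mathcal S = \{\widetilde A;\ A \in \mathcal S\}$ is $\|\cdot\|_{L^\infty}$-dense in $\mathcal D_0 \cap \mathcal C_0 = \mathcal D_0$, so $\mathcal D_0 \subseteq \closedspan\{\widetilde A;\ A \in \mathcal S\}$. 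Combining the two inclusions yields $\mathcal D_0 = \closedspan\{\widetilde A;\ A \in \mathcal S\}$.

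Essentially all of the work has been front-loaded into the correspondence machinery, so no serious obstacle is expected here. The only point worth spelling out is the uniqueness argument in the second paragraph — that a closed, $\alpha$-invariant $\mathcal D_0$ can correspond to only one closed, $\alpha$-invariant subspace of $\mathcal C_1$, which is exactly Theorem~\ref{Theorem_correspondence_theory}(4) — together with the (trivial but essential) observation that $\mathcal S \subseteq \mathcal C_1$, which is the content of Theorem~\ref{Toeplitz_algebra_linearly_generated}. One may also note in passing that $\closedspan\{\widetilde A;\ A \in \mathcal S\}$ is automatically $\alpha$-invariant, since $\alpha_z(\widetilde A) = \widetilde{\alpha_z(A)}$ by Lemma~\ref{lemma_convolution_ainfty}(3) and $\mathcal S$ is $\alpha$-invariant, though this is not needed once the equality with $\mathcal D_0$ is established.
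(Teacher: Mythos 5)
Your proof is correct and takes essentially the same route as the paper: identify $\mathcal S$ as a closed $\alpha$-invariant subspace of $\mathcal C_1$ via Theorem~\ref{Toeplitz_algebra_linearly_generated}, invoke Theorem~\ref{Theorem_correspondence_theory}(4) for the unique corresponding $\mathcal D_0$, and then match it against Proposition~\ref{main_result_correspondence}. Your only addition is to spell out the identification $\mathcal D_0 = \closedspan\{\widetilde A;\ A\in\mathcal S\}$ (via the pair property for one inclusion and Theorem~\ref{Theorem_correspondence_theory}(2) for the other), which the paper simply states as part of the invocation of the correspondence theorem — a harmless and arguably helpful bit of extra detail.
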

\begin{proof}
Since $\mathcal S \subseteq \mathcal T^{p,t}$, $\alpha$ acts norm-continuously on $\mathcal S$. Theorem \ref{Theorem_correspondence_theory} asserts that there is some closed and $\alpha$-invariant $\mathcal D_0 \subseteq \BUC(\mathbb C^n)$ such that
\[ \mathcal S \leftrightarrow \mathcal D_0 = \closedspan\{ \widetilde{A}; ~ A \in \mathcal S\}.\]
As we have seen before, 
\[ \mathcal D_0 \leftrightarrow \mathcal T_{lin}^{p,t}(\mathcal D_0). \]
\end{proof}
Before we continue with the general theory, want to present two examples of linearly generated Toeplitz algebras:
\begin{exs}
\begin{enumerate}[(1)]
\item \textbf{The CCR algebra.}
In \cite{Coburn1999}, L. Coburn studied the CCR algebra in the Bargmann representation. For this, let $p = 2, t = 1/2$ and define
\begin{equation*}
\operatorname{CCR}(\mathbb C^n) := C^\ast(\{ W_z; ~z \in \mathbb C^n\}).
\end{equation*}
Here, $C^\ast(\mathcal A)$ denotes the $C^\ast$ algebra generated by the set $\mathcal A \subseteq \mathcal L(F_{1/2}^2)$. It is well-known that each Weyl operator $W_z$ coincides with a certain Toeplitz operator, hence $\operatorname{CCR}(\mathbb C^n) \subseteq \mathcal T^{2,1/2}$. Coburn showed
\[ \operatorname{CCR}(\mathbb C^n) = \mathcal T^{2,1/2}(\operatorname{AP}) = \mathcal T_{lin}^{2,1/2}(\operatorname{AP}), \]
where $\operatorname{AP}$ denotes the Banach algebra of almost periodic functions. Note that $\operatorname{AP}$ can be seen to be $\alpha$-invariant. Hence, certain $\alpha$-invariant closed symbol spaces may generate the same space of operators linearly and as an algebra.
\item \textbf{Radial Toeplitz operators.}
In the study of commutative $C^\ast$ algebras generated by Toeplitz operators on the Fock spaces $F_t^2$, there are two relevant model cases: The radial case and the horizontal case. The $C^\ast$ algebra generated by Toeplitz operators with radial symbols on $F_t^2$ is linearly generated by radial Toeplitz operators. Yet, the space of radial functions is not $\alpha$-invariant. Hence, there are interesting aspects of the problem of linearly generated Toeplitz algebras outside the scope of our approach.
\end{enumerate}
\end{exs} \ \\
$\operatorname{CCR}(\mathbb C^n)$ is an example of a Toeplitz algebra which is generated as an algebra and as a closed linear space by the same set $\mathcal D_0 \subseteq \BUC(\mathbb C^n)$. We will further investigate such subspaces of $\mathcal T^{p,t}$. 
\begin{lem}
If $\mathcal D_0 \subseteq \BUC(\mathbb C^n)$ is closed and $\alpha$-invariant, then $\mathcal D_0$ is closed under the operation $f \mapsto \widetilde{f}^{(t)}$ for all $t > 0$.
\end{lem}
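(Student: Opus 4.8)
The plan is to recognize the heat transform $f \mapsto \widetilde{f}^{(t)}$ as ordinary convolution against the Gaussian $f_t$ introduced in Section~\ref{section_approximate_identity}, and then to invoke the $L^1$-module structure of closed $\alpha$-invariant subspaces of $\BUC(\mathbb{C}^n)$ recorded in Lemma~\ref{Lemma_translation_invariant}.

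First I would record the pointwise identity. For $f \in L^\infty(\mathbb{C}^n)$ and $z \in \mathbb{C}^n$,
\[ \widetilde{f}^{(t)}(z) = \langle f k_z^t, k_z^t\rangle_{F_t^2} = \int_{\mathbb{C}^n} f(w)\, |k_z^t(w)|^2 \, d\mu_t(w), \]
and a direct computation (the same one already appearing inside the proof of Proposition~\ref{Proposition_Toeplitz_identity}) gives $|k_z^t(w)|^2 \, d\mu_t(w) = \frac{1}{(\pi t)^n} e^{-|z-w|^2/t}\, dV(w) = f_t(z-w)\, dV(w)$. Hence $\widetilde{f}^{(t)} = f \ast f_t = f_t \ast f$, where $f_t \in L^1(\mathbb{C}^n)$ with $\|f_t\|_{L^1} = 1$.

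Next, since $\mathcal{D}_0 \subseteq \BUC(\mathbb{C}^n) = \mathcal{C}_0$ is closed and $\alpha$-invariant, the subspace $\mathcal{D} := \mathcal{D}_0 \oplus \{0\} \subseteq \mathcal{C}$ is closed and $\alpha$-invariant, so Lemma~\ref{Lemma_translation_invariant} yields $L^1(\mathbb{C}^n) \ast \mathcal{D}_0 \subseteq \mathcal{D}_0$ (concretely: for $f \in \mathcal{D}_0$ the Bochner integral $\int_{\mathbb{C}^n} g(z) \alpha_z(f)\, dV(z)$ converges in the closed subspace $\mathcal{D}_0$, as $z \mapsto \alpha_z(f)$ is bounded and norm-continuous). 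Applying this with $g = f_t$ gives $\widetilde{f}^{(t)} = f_t \ast f \in \mathcal{D}_0$ for every $f \in \mathcal{D}_0$, and since $t > 0$ was arbitrary, the claim follows for all $t > 0$.

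There is no genuine obstacle here; the only step requiring a little care is the identification $\widetilde{f}^{(t)} = f_t \ast f$, namely verifying that the Gaussian weight arising from $|k_z^t(w)|^2\, d\mu_t(w)$ is precisely the normalized Gaussian $f_t$ (in particular that the constant is $(\pi t)^{-n}$, so that $f_t \in L^1(\mathbb{C}^n)$ and Lemma~\ref{Lemma_translation_invariant} is applicable). Everything else is an immediate invocation of the convolution-module property already established.
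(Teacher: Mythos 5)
Your proof is correct, and it takes a genuinely different (and more elementary) route than the paper's. The paper's proof is a two-step application of the correspondence machinery: since $\mathcal D_0 \leftrightarrow \mathcal D_1 := \mathcal T_{lin}^{p,t}(\mathcal D_0)$ is a pair, convolving with $\mathcal R_t$ sends $f$ to $T_f^t \in \mathcal D_1$, and then convolving with $P_{\mathbb C}$ sends $T_f^t$ to $\widetilde{f}^{(t)} \in \mathcal D_0$. You instead bypass the operator side entirely: you observe that $\widetilde{f}^{(t)} = f_t \ast f$ is an ordinary $L^1$--$L^\infty$ convolution against the normalized Gaussian $f_t \in L^1(\mathbb C^n)$, and then invoke Lemma~\ref{Lemma_translation_invariant}, whose proof is the simple remark that a Bochner integral with integrand valued in the closed subspace $\mathcal D_0$ lands back in $\mathcal D_0$. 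At bottom the two arguments compute the same thing, since $f_t = P_{\mathbb C} \ast \mathcal R_t$ and the convolution is associative (so $P_{\mathbb C} \ast (\mathcal R_t \ast f) = f_t \ast f$); but your phrasing avoids Theorem~\ref{Theorem_correspondence_theory} altogether and so depends on strictly less of the developed theory, which makes it the more self-contained argument. Your Gaussian computation $|k_z^t(w)|^2\, d\mu_t(w) = (\pi t)^{-n} e^{-|z-w|^2/t}\, dV(w)$ is also correct and matches the identity used inside the proof of Proposition~\ref{Proposition_Toeplitz_identity}.
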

\begin{proof}
For $f \in \mathcal D_0$ we have $T_f^t = \mathcal R_t \ast f \in \mathcal D_1$ and $P_{\mathbb C} \ast (\mathcal R_t \ast f) = P_{\mathbb C} \ast T_f^t = \widetilde{f}^{(t)} \in \mathcal D_0$, where $\mathcal D_0 \leftrightarrow \mathcal D_1$.
\end{proof}
We obtain the following general criterion for a Toeplitz algebra being linearly generated by the same class of symbols:
\begin{thm}
Let $\mathcal D_0 \subseteq \BUC(\mathbb C^n)$ be closed and $\alpha$-invariant. Then, we have
\[ \mathcal T_{lin}^{p,t}(\mathcal D_0) = \mathcal T^{p,t}(\mathcal D_0) \]
if and only if for each $k \in \mathbb N$ the range of the map
\begin{equation}\label{product_map}
\mathcal D_0^k \mapsto \BUC(\mathbb C^n), \quad (f_1, \dots, f_k) \mapsto (T_{f_1}^t \dots T_{f_k}^t)^{\sim}
\end{equation}
is contained in $\mathcal D_0$. Further, we have
\[ \mathcal T_{lin}^{2,t}(\mathcal D_0) = \mathcal T_{\ast}^{2,t}(\mathcal D_0) \]
if and only if $\mathcal D_0$ is closed under the product maps (\ref{product_map}) and under taking complex conjugates.
\end{thm}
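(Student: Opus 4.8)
The plan is to use the correspondence theory, specifically Proposition~\ref{main_result_correspondence}, which identifies $\mathcal D_0 \leftrightarrow \mathcal T_{lin}^{p,t}(\mathcal D_0)$, together with the fact that $\mathcal T^{p,t}(\mathcal D_0)$ is a (closed, $\alpha$-invariant) subspace of $\mathcal T^{p,t} = \mathcal C_1$ containing $\mathcal T_{lin}^{p,t}(\mathcal D_0)$. By Theorem~\ref{theorem_linearly_generated}, $\mathcal T^{p,t}(\mathcal D_0)$ is itself linearly generated: there is a closed $\alpha$-invariant space $\mathcal E_0 \subseteq \BUC(\mathbb C^n)$ with $\mathcal T^{p,t}(\mathcal D_0) = \mathcal T_{lin}^{p,t}(\mathcal E_0)$ and $\mathcal E_0 = \closedspan\{ \widetilde A; ~A \in \mathcal T^{p,t}(\mathcal D_0)\}$. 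Since the correspondence is ordered by inclusion, the equality $\mathcal T_{lin}^{p,t}(\mathcal D_0) = \mathcal T^{p,t}(\mathcal D_0)$ holds if and only if $\mathcal D_0 = \mathcal E_0$, i.e. if and only if $\widetilde A \in \mathcal D_0$ for every $A$ in the algebra generated by Toeplitz operators with symbols in $\mathcal D_0$.

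The forward direction is then essentially immediate: if $\mathcal T_{lin}^{p,t}(\mathcal D_0) = \mathcal T^{p,t}(\mathcal D_0)$, then any finite product $T_{f_1}^t \cdots T_{f_k}^t$ with $f_j \in \mathcal D_0$ lies in $\mathcal T_{lin}^{p,t}(\mathcal D_0) \leftrightarrow \mathcal D_0$, so applying $P_{\mathbb C} \ast (\cdot)$ (which maps $\mathcal D_1 = \mathcal T_{lin}^{p,t}(\mathcal D_0)$ into $\mathcal D_0$ by the correspondence) gives $(T_{f_1}^t \cdots T_{f_k}^t)^\sim \in \mathcal D_0$. For the converse, suppose the range of each product map \eqref{product_map} lies in $\mathcal D_0$. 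I want to show every product $T_{f_1}^t \cdots T_{f_k}^t$ lies in $\mathcal T_{lin}^{p,t}(\mathcal D_0)$; by density and the algebra structure this yields $\mathcal T^{p,t}(\mathcal D_0) \subseteq \mathcal T_{lin}^{p,t}(\mathcal D_0)$, the reverse inclusion being trivial. The key point is that for $A = T_{f_1}^t \cdots T_{f_k}^t$ we have $A \in \mathcal C_1$ (it is a product of operators in the Banach algebra $\mathcal C_1$) and $\widetilde A = P_{\mathbb C} \ast A \in \mathcal D_0$ by hypothesis; then Theorem~\ref{Theorem_correspondence_theory}(3), applied to the pair $\mathcal D_0 \oplus \mathcal T_{lin}^{p,t}(\mathcal D_0)$ (a pair since $\mathcal N(F_t^p) \ast \mathcal T_{lin}^{p,t}(\mathcal D_0) \subseteq \mathcal D_0$ by the correspondence), gives $A \in \overline{\mathcal T_{lin}^{p,t}(\mathcal D_0)} = \mathcal T_{lin}^{p,t}(\mathcal D_0)$. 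Taking closed linear spans over all such products $A$ and all $k$ finishes the argument.

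For the $C^\ast$ statement with $p = 2$, I would argue in the same way, but now the algebra $\mathcal T^{2,t}_\ast(\mathcal D_0)$ is generated by products of Toeplitz operators $T_{f_j}^t$ and their adjoints $(T_{f_j}^t)^\ast = T_{\overline{f_j}}^t$. The closedness of $\mathcal D_0$ under complex conjugation ensures that all such adjoints already have symbols in $\mathcal D_0$, so the $C^\ast$-algebra generated by $\{T_f^t : f \in \mathcal D_0\}$ coincides with the Banach algebra generated by $\{T_f^t : f \in \mathcal D_0\}$ once $\mathcal D_0 = \overline{\mathcal D_0}$; combined with the product-map condition, the previous paragraph's argument then gives $\mathcal T_{lin}^{2,t}(\mathcal D_0) = \mathcal T^{2,t}(\mathcal D_0) = \mathcal T^{2,t}_\ast(\mathcal D_0)$. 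Conversely, if $\mathcal T_{lin}^{2,t}(\mathcal D_0) = \mathcal T^{2,t}_\ast(\mathcal D_0)$, then this space is $\ast$-closed, and since $(T_f^t)^\ast = T_{\overline f}^t$ and $P_{\mathbb C} \ast (\cdot)$ maps $\mathcal T_{lin}^{2,t}(\mathcal D_0)$ onto a dense subset of $\mathcal D_0$, one extracts both $\overline f \in \mathcal D_0$ for $f \in \mathcal D_0$ and the product-map condition as before.

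The main obstacle I anticipate is verifying carefully that $\mathcal D_0 \oplus \mathcal T_{lin}^{p,t}(\mathcal D_0)$ is genuinely a pair and that Theorem~\ref{Theorem_correspondence_theory}(3) applies — one must check that the Berezin transform of the \emph{product} operator lands back in $\mathcal D_0$ rather than merely in $\BUC(\mathbb C^n)$, which is exactly what the product-map hypothesis supplies, and that no subtlety arises from the fact that $\mathcal C_1$ is only a Banach algebra (not a $C^\ast$ algebra) when $p \neq 2$. For the $C^\ast$ direction, the only genuinely new ingredient beyond the Banach-algebra case is the bookkeeping that closure under conjugation lets one absorb adjoints into the symbol class; everything else is a transcription of the first part.
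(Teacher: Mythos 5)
Your proposal is correct and follows essentially the same route as the paper: both directions hinge on Theorem \ref{Theorem_correspondence_theory}(3) applied to the pair $\mathcal D_0 \oplus \mathcal T_{lin}^{p,t}(\mathcal D_0)$, and the $C^\ast$ part is handled by absorbing adjoints via conjugation-closure; the scaffolding with $\mathcal E_0$ in your first paragraph is harmless but superfluous, since your second paragraph gives the direct argument the paper uses. One small imprecision: in the last step you say that $P_{\mathbb C} \ast (\cdot)$ mapping $\mathcal T_{lin}^{2,t}(\mathcal D_0)$ densely into $\mathcal D_0$ extracts $\overline f \in \mathcal D_0$ — but $P_{\mathbb C} \ast T_{\overline f}^t = \widetilde{\overline f}^{(t)}$ is only the heat transform of $\overline f$; the correct mechanism (used in the paper) is the $\mathcal R_t$-direction of Theorem \ref{Theorem_correspondence_theory}(3): $\mathcal R_t \ast \overline f = T_{\overline f}^t \in \mathcal T_{lin}^{2,t}(\mathcal D_0)$ and $\overline f \in \BUC(\mathbb C^n)$ imply $\overline f \in \overline{\mathcal D_0} = \mathcal D_0$.
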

\begin{proof}
Assume $\mathcal T_{lin}^{p,t}(\mathcal D_0) = \mathcal T^{p,t}(\mathcal D_0)$. Then, the operator product $T_{f_1}^t \dots T_{f_k}^t$ is contained in $\mathcal T_{lin}^{p,t}(\mathcal D_0)$. The Berezin transform of that operator product, which is just convolution by $P_{\mathbb C}$, is therefore contained in $\mathcal D_0$.

Now, assume that the range of the map (\ref{product_map}) is contained in $\mathcal D_0$. We need to prove that $T_{f_1}^t\dots T_{f_k}^t \in \mathcal T_{lin}^{p,t}(\mathcal D_0)$ for $f_1, \dots, f_k \in \mathcal D_0$. Since $T_{f_1}^t \dots T_{f_k}^t \in \mathcal C_1$ and $(T_{f_1}^t\dots T_{f_k}^t)^{\sim} = P_{\mathbb C} \ast (T_{f_1}^t \dots T_{f_k}^t) \in \mathcal D_0$ by assumption, this follows from Theorem \ref{Theorem_correspondence_theory}(3).

Finally, let $p = 2$ and let $\mathcal D_0$ be closed under the product maps (\ref{product_map}). If $\mathcal D_0$ is further closed under taking complex conjugates, then the adjoint of each generator of $\mathcal T^{2,t}(\mathcal D_0)$ is also contained in $\mathcal T^{2,t}(\mathcal D_0)$, hence it is a $C^\ast$ algebra. If on the other hand $\mathcal T_{lin}^{2,t}(\mathcal D_0) = \mathcal T_{\ast}^{2,t}(\mathcal D_0)$, then for each $f \in \mathcal D_0$ it holds 
\[ (T_f^t)^\ast = T_{\overline{f}}^t = \mathcal R_t \ast \overline{f} \in \mathcal T_{lin}^{2,t}(\mathcal D_0). \] By Theorem \ref{Theorem_correspondence_theory}(3) we therefore obtain $\overline{f} \in \mathcal D_0$.
\end{proof}
While the previous result gives a characterization of all $\mathcal D_0$ which have the desired property, it seems that the property of being closed under the above product maps is in general difficult to verify. Yet, there is an important consequence: Since the Toeplitz operators are integral operators and the integral expression defining them does not depend on $p$, and also the formula for the Berezin transform does not depend on $p$, the property that the range of (\ref{product_map}) is contained in $\mathcal D_0$ does not depend on $p$. Hence, we obtain:
\begin{cor}\label{p_independence}
Let $\mathcal D_0 \subseteq \BUC(\mathbb C^n)$ be closed and $\alpha$-invariant. Then, we have
\[ \mathcal T_{lin}^{p,t}(\mathcal D_0) = \mathcal T^{p,t}(\mathcal D_0) \]
for one $p$ if and only if it holds true for all $p$.
\end{cor}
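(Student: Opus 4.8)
The plan is to read the corollary off directly from the preceding theorem, whose characterization of the identity $\mathcal T_{lin}^{p,t}(\mathcal D_0) = \mathcal T^{p,t}(\mathcal D_0)$ is phrased purely in terms of the product maps (\ref{product_map}). Thus it suffices to check that, for a fixed $k$ and fixed $f_1, \dots, f_k \in \mathcal D_0 \subseteq L^\infty(\mathbb C^n)$, the function $(T_{f_1}^t \cdots T_{f_k}^t)^{\sim} \in \BUC(\mathbb C^n)$ is literally the same element regardless of the exponent $p$. Granting this, the condition ``the range of (\ref{product_map}) is contained in $\mathcal D_0$'' is one and the same requirement for every $p$; so if equality holds for some $p_1$, the theorem (applied with $p_1$) gives the range condition, and then the theorem (applied with any other $p_2$) gives equality for $p_2$.

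For the $p$-independence I would argue as follows. Each $k_z^t$ belongs to $F_t^{p_0}$ for every $1 \le p_0 < \infty$, since $\int_{\mathbb C^n}|k_z^t|^{p_0}\, d\mu_{2t/p_0}$ is a convergent Gaussian integral; more generally, multiplication by an $L^\infty$ function maps $L^{p_0}(\mathbb C^n,\mu_{2t/p_0})$ into itself, and the projection $P_t$ is given by one explicit integral formula, bounded from $L^{p_0}(\mathbb C^n,\mu_{2t/p_0})$ onto $F_t^{p_0}$ for each $p_0$. Hence, if $g \in \bigcap_{p_0} F_t^{p_0}$ and $f \in L^\infty(\mathbb C^n)$, then $T_f^t g = P_t(fg)$ is again a holomorphic function lying in $\bigcap_{p_0} F_t^{p_0}$, and its value at each point is computed by the same integral no matter which $F_t^{p_0}$ we view $T_f^t$ as acting on. Starting from $g = k_z^t$ and iterating $k$ times shows that $(T_{f_1}^t \cdots T_{f_k}^t) k_z^t$ is a single $p$-independent holomorphic function. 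Since the Berezin transform only tests an operator on reproducing kernels,
\[ (T_{f_1}^t \cdots T_{f_k}^t)^{\sim}(z) = \langle (T_{f_1}^t \cdots T_{f_k}^t) k_z^t,\, k_z^t\rangle_{F_t^2}, \]
the asserted $p$-independence of $(T_{f_1}^t \cdots T_{f_k}^t)^{\sim}$ follows at once.

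The only delicate point is the bookkeeping in the middle step: one must verify that each partial product $T_{f_j}^t \cdots T_{f_k}^t k_z^t$ genuinely sits in every $F_t^{p_0}$, so that the next Toeplitz operator may be applied through the same integral formula; this is exactly where the boundedness of the $f_j$ and the Gaussian decay of the measures $\mu_{2t/p_0}$ are used. Everything else is a formal consequence of the preceding theorem together with the observation that neither the integral defining $T_f^t$ nor the formula defining the Berezin transform involves $p$.
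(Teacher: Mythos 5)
Your proposal is correct and follows essentially the same route as the paper, which disposes of the corollary with a one-sentence remark before its statement: the characterization in the preceding theorem is phrased entirely via the maps $(f_1,\dots,f_k)\mapsto(T_{f_1}^t\cdots T_{f_k}^t)^\sim$, and since the integral formulas for $T_f^t$ (via $P_t$) and for the Berezin transform do not involve $p$, the range condition is $p$-independent. What you add is the careful bookkeeping (testing on $k_z^t\in\bigcap_{p_0}F_t^{p_0}$ and iterating $P_t$) that the paper leaves implicit, which is a sound and welcome elaboration rather than a different method.
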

Analogously to the above reasoning one proves:
\begin{cor}\label{corollary_ideal}
Let $\mathcal D_0 \subseteq \BUC(\mathbb C^n)$ be closed and $\alpha$-invariant such that
\[ \mathcal T_{lin}^{p,t}(\mathcal D_0) = \mathcal T^{p,t}(\mathcal D_0). \]
If $\mathcal I \subset \mathcal D_0$ is also closed and $\alpha$-invariant, then $\mathcal T_{lin}^{p,t}(\mathcal I)$ is a (left/right/two-sided) ideal in $\mathcal T_{lin}^{p,t}(\mathcal D_0)$ for one $p$ if and only if it is a (left/right/two-sided) ideal for all $p$.
\end{cor}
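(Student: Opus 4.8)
The plan is to imitate the proof of the theorem preceding Corollary~\ref{p_independence} together with that corollary: we rewrite the ideal property of $\mathcal T_{lin}^{p,t}(\mathcal I)$ inside $\mathcal T_{lin}^{p,t}(\mathcal D_0)$ as a condition on the Berezin transforms of products of Toeplitz operators with symbols in $\mathcal D_0$ and $\mathcal I$, and then observe that this condition does not see $p$. First I would reduce to generators; it suffices to treat left ideals, the right-ideal case being symmetric and the two-sided one their conjunction. Under the standing hypothesis $\mathcal T_{lin}^{p,t}(\mathcal D_0)=\mathcal T^{p,t}(\mathcal D_0)$ this algebra is generated by $\{T_f^t : f\in\mathcal D_0\}$, so finite linear combinations of products $T_{f_1}^t\cdots T_{f_k}^t$ with $k\ge1$ and $f_j\in\mathcal D_0$ are dense in it, while finite linear combinations of $T_g^t$ with $g\in\mathcal I$ are dense in the closed subspace $\mathcal T_{lin}^{p,t}(\mathcal I)$; note also that $\mathcal T_{lin}^{p,t}(\mathcal I)\subseteq\mathcal T_{lin}^{p,t}(\mathcal D_0)$ since the correspondence is ordered by inclusion. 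As multiplication on $\mathcal L(F_t^p)$ is jointly continuous and $\mathcal T_{lin}^{p,t}(\mathcal I)$ is closed, $\mathcal T_{lin}^{p,t}(\mathcal I)$ is a left ideal in $\mathcal T_{lin}^{p,t}(\mathcal D_0)$ if and only if $T_{f_1}^t\cdots T_{f_k}^t\,T_g^t\in\mathcal T_{lin}^{p,t}(\mathcal I)$ for all $k\ge1$, all $f_1,\dots,f_k\in\mathcal D_0$ and all $g\in\mathcal I$.

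Next I would translate membership in $\mathcal T_{lin}^{p,t}(\mathcal I)$ into a condition on symbols, exactly as in the proof of that theorem. By Proposition~\ref{main_result_correspondence} one has $\mathcal I\leftrightarrow\mathcal T_{lin}^{p,t}(\mathcal I)$, so $\mathcal I\oplus\mathcal T_{lin}^{p,t}(\mathcal I)$ is a pair; since $P_{\mathbb C}\in\mathcal N(F_t^p)$, the Berezin transform $B\mapsto\widetilde B=P_{\mathbb C}\ast B$ maps $\mathcal T_{lin}^{p,t}(\mathcal I)$ into $\mathcal I$, while Theorem~\ref{Theorem_correspondence_theory}(3) applied to this pair shows conversely that $A\in\mathcal C_1$ with $\widetilde A\in\mathcal I$ forces $A\in\mathcal T_{lin}^{p,t}(\mathcal I)$. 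Since each product $T_{f_1}^t\cdots T_{f_k}^t\,T_g^t$ lies in $\mathcal C_1=\mathcal T^{p,t}$ by Theorem~\ref{Toeplitz_algebra_linearly_generated}, we obtain
\[ T_{f_1}^t\cdots T_{f_k}^t\,T_g^t\in\mathcal T_{lin}^{p,t}(\mathcal I)\quad\Longleftrightarrow\quad(T_{f_1}^t\cdots T_{f_k}^t\,T_g^t)^{\sim}\in\mathcal I. \]
Together with the previous step, $\mathcal T_{lin}^{p,t}(\mathcal I)$ is a left ideal in $\mathcal T_{lin}^{p,t}(\mathcal D_0)$ if and only if $(T_{f_1}^t\cdots T_{f_k}^t\,T_g^t)^{\sim}\in\mathcal I$ for all $k\ge1$, $f_j\in\mathcal D_0$, $g\in\mathcal I$; the right-ideal property is characterised in the same way by $(T_g^t\,T_{f_1}^t\cdots T_{f_k}^t)^{\sim}\in\mathcal I$, and the two-sided one by both.

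It remains to observe that these criteria are independent of $p$. As recalled before Corollary~\ref{p_independence}, the Toeplitz operators are given by an integral expression through the projection $P_t$ that does not involve $p$; consequently the operators $T_{f_j}^t$ and $T_g^t$ on the various spaces $F_t^p$ agree on the common dense subspace of holomorphic polynomials and on the kernels $k_z^t$, all of which lie in every $F_t^p$, and, each $T_{f_j}^t$ being bounded on $F_t^p$, one checks by induction that the successive images stay in $\bigcap_p F_t^p$ and do not depend on $p$. Hence the composite $T_{f_1}^t\cdots T_{f_k}^t\,T_g^t$ applied to $k_z^t$ is one and the same function for every $1<p<\infty$, and therefore so is its Berezin transform $(T_{f_1}^t\cdots T_{f_k}^t\,T_g^t)^{\sim}$. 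Whether this fixed element of $\BUC(\mathbb C^n)$ lies in the fixed subspace $\mathcal I$ is thus a property holding for one $p$ precisely when it holds for all $p$, and similarly for the right- and two-sided versions; this is the assertion. The only genuinely delicate point is this last $p$-independence — that composing the $p$-dependent operators $T_{f_j}^t\in\mathcal L(F_t^p)$ produces the same function-valued Berezin transform whatever $p$ — and it is verified just as for the product maps~(\ref{product_map}) in the theorem preceding Corollary~\ref{p_independence}.
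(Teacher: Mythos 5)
Your argument is correct and takes essentially the same route the paper intends: you characterise the ideal property of $\mathcal T_{lin}^{p,t}(\mathcal I)$ in $\mathcal T_{lin}^{p,t}(\mathcal D_0)$ by the condition $(T_{f_1}^t\cdots T_{f_k}^t\,T_g^t)^{\sim}\in\mathcal I$ (resp.\ $(T_g^t\,T_{f_1}^t\cdots T_{f_k}^t)^{\sim}\in\mathcal I$), using the correspondence pair $\mathcal I\leftrightarrow\mathcal T_{lin}^{p,t}(\mathcal I)$ and Theorem~\ref{Theorem_correspondence_theory}(3), and then note that this Berezin-transform condition is manifestly $p$-independent because the defining integral formulas for $T_f^t$ and the Berezin transform do not involve $p$. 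This mirrors the theorem preceding Corollary~\ref{p_independence} and the paper's ``analogously one proves'' reasoning, just written out in full for the ideal case.
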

While there is no $C^\ast$ algebraic structure on the operator side (at least for $p \neq 2$), the fact that $\mathcal T^{p,t}$ depends ``almost not on $p$'' (without making this a precise statement) still allows us to obtain results typical for $C^\ast$ algebras. Here is one example:
\begin{prop}\label{Proposition_contains_compacts}
Let $\mathcal D_0 \subseteq \BUC(\mathbb C^n)$ be an $\alpha$-invariant $C^\ast$ subalgebra. If $\mathcal T_{lin}^{p,t}(\mathcal D_0)$ contains at least one non-trivial compact operator, then it contains all compact operators.
\end{prop}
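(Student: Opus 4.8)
The plan is to reduce the claim to the inclusion $C_0(\mathbb{C}^n) \subseteq \mathcal{D}_0$ and to establish the latter by a Stone--Weierstrass argument on the symbol side. Write $\mathcal{S} := \mathcal{T}_{lin}^{p,t}(\mathcal{D}_0)$ and fix a compact $K \in \mathcal{S}$ with $K \neq 0$. Granting $C_0(\mathbb{C}^n) \subseteq \mathcal{D}_0$, the conclusion is immediate: by Proposition \ref{proposition_classification_compact}(2) we have $\mathcal{K}(F_t^p) = \overline{\{T_f^t; ~f \in C_0(\mathbb{C}^n)\}}^{\|\cdot\|_{op}} = \mathcal{T}_{lin}^{p,t}(C_0(\mathbb{C}^n))$, and since $\mathcal{T}_{lin}^{p,t}(\cdot)$ is monotone in the symbol space this gives $\mathcal{K}(F_t^p) \subseteq \mathcal{T}_{lin}^{p,t}(\mathcal{D}_0) = \mathcal{S}$.

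The first step is to exhibit one nonzero element of $C_0(\mathbb{C}^n)$ inside $\mathcal{D}_0$, for which the natural candidate is the Berezin transform of $K$. By Proposition \ref{main_result_correspondence} the space $\mathcal{D}_0 \oplus \mathcal{S}$ is a pair, so $\mathcal{N}(F_t^p) \ast \mathcal{S} \subseteq \mathcal{D}_0$; convolving with $P_{\mathbb{C}}$ yields $\widetilde{K} = P_{\mathbb{C}} \ast K \in \mathcal{D}_0$. Since $K$ is compact, $\widetilde{K} \in C_0(\mathbb{C}^n)$ (as recalled in the proof of Proposition \ref{proposition_classification_compact}), and $\widetilde{K} \neq 0$ by injectivity of the Berezin transform. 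By $\alpha$-invariance $\mathcal{D}_0$ then contains all translates $\alpha_z(\widetilde{K})$, and being a $C^\ast$ algebra it contains the $C^\ast$-subalgebra $\mathcal{B} \subseteq C_0(\mathbb{C}^n)$ generated by $\{\alpha_z(\widetilde{K}); ~z \in \mathbb{C}^n\}$.

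It remains to show $\mathcal{B} = C_0(\mathbb{C}^n)$, which I would do via the Stone--Weierstrass theorem for locally compact Hausdorff spaces. The algebra $\mathcal{B}$ vanishes nowhere: choosing $z_1$ with $\widetilde{K}(z_1) \neq 0$ we get $\alpha_{w-z_1}(\widetilde{K})(w) = \widetilde{K}(z_1) \neq 0$ for every $w$. It also separates points: if $w_1 \neq w_2$ were separated by no generator, then $\widetilde{K}(x) = \widetilde{K}(x + (w_1 - w_2))$ for all $x$, so $\widetilde{K}$ would be periodic with a nonzero period --- impossible for a nonzero function in $C_0(\mathbb{C}^n)$. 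Hence $C_0(\mathbb{C}^n) = \mathcal{B} \subseteq \mathcal{D}_0$, which completes the argument.

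I do not anticipate a genuine difficulty here; the only point demanding care is that the $C^\ast$-algebra generated by the translates of $\widetilde{K}$ stays inside $\mathcal{D}_0$. This is precisely where the hypotheses enter: the $C^\ast$-assumption makes complex Stone--Weierstrass applicable and keeps complex conjugates available, while translation invariance of $\mathcal{D}_0$ supplies the separating, nowhere-vanishing family of generators. Everything else is routine.
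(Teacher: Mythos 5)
Your proof is correct and follows essentially the same route as the paper: extract the nonzero $C_0$ function $\widetilde{K}=P_{\mathbb C}\ast K\in\mathcal D_0$, upgrade to $C_0(\mathbb C^n)\subseteq\mathcal D_0$ via Stone--Weierstrass, and conclude using $\overline{\mathcal R_t\ast C_0(\mathbb C^n)}=\mathcal K(F_t^p)$. The only difference is that the paper outsources the Stone--Weierstrass step to a preceding lemma stated without proof as ``a simple exercise,'' whereas you spell out the nowhere-vanishing and point-separating checks (the latter via the observation that a nonzero $C_0$ function cannot have a nonzero period), which is exactly the intended content of that lemma.
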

The proof of this proposition is based on the following well-known fact, which is in turn a simple exercise using the Stone-Weierstrass Theorem.
\begin{lem}
Let $\mathcal D_0$ be an $\alpha$-invariant $C^\ast$ subalgebra of $\BUC(\mathbb C^n)$. If $\mathcal D_0$ contains a non-trivial function from $C_0(\mathbb C^n)$, then it holds $C_0(\mathbb C^n) \subseteq \mathcal D_0$.
\end{lem}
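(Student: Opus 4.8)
The plan is to pass to $J := \mathcal D_0 \cap C_0(\mathbb C^n)$ and to show that $J$ is a non-zero, $\alpha$-invariant $C^\ast$-subalgebra of the commutative $C^\ast$-algebra $C_0(\mathbb C^n)$; the locally compact form of the Stone--Weierstrass theorem then reduces the claim $J = C_0(\mathbb C^n)$ to checking two things: that $J$ vanishes nowhere and that $J$ separates the points of $\mathbb C^n$.

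First I would record that $J$ is indeed a $C^\ast$-subalgebra of $C_0(\mathbb C^n)$ with its sup-norm: it is norm-closed as an intersection of norm-closed sets, it is closed under products and linear combinations because both $\mathcal D_0$ and $C_0(\mathbb C^n)$ are, and it is closed under complex conjugation because $\mathcal D_0$ is a $C^\ast$-algebra. It is $\alpha$-invariant because translations map $\mathcal D_0$ into itself by hypothesis and obviously preserve $C_0(\mathbb C^n)$. By assumption $J$ contains a function $g$ with $g \not\equiv 0$, so $J \neq \{0\}$.

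For the nowhere-vanishing property I would fix $w_0 \in \mathbb C^n$ with $g(w_0) \neq 0$; then for an arbitrary $z \in \mathbb C^n$ the function $\alpha_{z - w_0} g \in J$ takes the nonzero value $g(w_0)$ at the point $z$. For point separation I would argue by contradiction: if $f(z_1) = f(z_2)$ for every $f \in J$ and some $z_1 \neq z_2$, then applying this to $\alpha_z f$ and letting $z$ vary over $\mathbb C^n$ forces every $f \in J$ to be periodic with period $z_2 - z_1 \neq 0$; a nonzero periodic function cannot lie in $C_0(\mathbb C^n)$, so $J = \{0\}$, contradicting $g \not\equiv 0$. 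With both conditions verified, Stone--Weierstrass yields $J = C_0(\mathbb C^n)$, that is, $C_0(\mathbb C^n) \subseteq \mathcal D_0$.

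There is no real obstacle here; the only points deserving a line of care are the invocation of the non-unital version of Stone--Weierstrass (a $C^\ast$-subalgebra of $C_0(X)$ that separates points and vanishes nowhere equals $C_0(X)$) and the elementary fact that a nonzero periodic function cannot belong to $C_0(\mathbb C^n)$, which is what drives both the separation and the nowhere-vanishing steps.
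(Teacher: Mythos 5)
Your proof is correct and follows exactly the route the paper indicates (the paper declines to write out the argument, calling it ``a simple exercise using the Stone--Weierstrass Theorem''): pass to $J = \mathcal D_0 \cap C_0(\mathbb C^n)$, use $\alpha$-invariance to verify the nowhere-vanishing and point-separating hypotheses of the locally compact Stone--Weierstrass theorem, and conclude $J = C_0(\mathbb C^n)$. The periodicity observation cleanly handles both hypotheses, and no gaps remain.
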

\begin{proof}[Proof of Proposition \ref{Proposition_contains_compacts}]
Let $0 \neq K \in \mathcal T_{lin}^{p,t}(\mathcal D_0)$ be compact. Then, $0 \neq P_{\mathbb C} \ast K \in C_0(\mathbb C^n) \cap \mathcal D_0$. The previous lemma implies $C_0(\mathbb C^n) \subseteq \mathcal D_0$, therefore $\mathcal R_t \ast C_0(\mathbb C^n) \subseteq \mathcal T_{lin}^{p,t}(\mathcal D_0)$. Since $\overline{\mathcal R_t \ast C_0(\mathbb C^n)} = \mathcal K(F_t^p)$ (Proposition \ref{proposition_classification_compact}), this yields $\mathcal K(F_t^p) \subseteq \mathcal T_{lin}^{p,t}(\mathcal D_0)$.
\end{proof}

The remaining part of this paper will deal with a proof of our second main theorem, which is the following:
\begin{thm}\label{main_theorem}
Let $\mathcal D_0 \subseteq \BUC(\mathbb C^n)$ be closed, $\alpha$- and $U$-invariant. Then, the following are equivalent:
\begin{enumerate}[(i)]
\item $\mathcal D_0$ is a $C^\ast$ algebra with respect to the standard operations and $L^\infty(\mathbb C^n)$ norm;
\item $\mathcal T_{lin}^{2,t}(\mathcal D_0) = \mathcal T_{\ast}^{2,t}(\mathcal D_0)$ for all $t > 0$.
\end{enumerate}
If the above equivalent conditions are fulfilled, then we have $\mathcal T_{lin}^{p,t}(\mathcal D_0) = \mathcal T^{p,t}(\mathcal D_0)$ for all $1 < p < \infty, ~ t > 0$.

If $\mathcal D_0$ is a closed, $\alpha$- and $U$-invariant $C^\ast$ subalgebra of $\BUC(\mathbb C^n)$ and $\mathcal I \subset \mathcal D_0$ is closed and $\alpha$-invariant, then the following are equivalent:
\begin{enumerate}[(i*)]
\item $\mathcal I$ is an ideal in $\mathcal D_0$;
\item $\mathcal T_{lin}^{2,t}(\mathcal I)$ is a left or right ideal in $\mathcal T_{lin}^{2,t}(\mathcal D_0)$ for all $t > 0$;
\item $\mathcal T_{lin}^{2,t}(\mathcal I)$ is a two-sided ideal in $\mathcal T_{lin}^{2,t}(\mathcal D_0)$ for all $t > 0$.
\end{enumerate}
Under these assumptions, $\mathcal T_{lin}^{p,t}(\mathcal I) = \mathcal T^{p,t}(\mathcal I)$ is a closed and two-sided ideal in $\mathcal T_{lin}^{p,t}(\mathcal D_0)$ for all $1 < p < \infty$ and $t > 0$.
\end{thm}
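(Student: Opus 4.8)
My plan rests on two ingredients. The first is a clean description of the operator space corresponding to a symbol space: combining $\mathcal D_0 \leftrightarrow \mathcal T_{lin}^{p,t}(\mathcal D_0)$ (Proposition \ref{main_result_correspondence}) with part (3) of Theorem \ref{Theorem_correspondence_theory} and the identity $\mathcal T^{p,t} = \mathcal C_1 = \mathcal T_{lin}^{p,t}(\BUC(\mathbb C^n))$ of Theorem \ref{Toeplitz_algebra_linearly_generated}, one obtains for every closed, $\alpha$-invariant $\mathcal E \subseteq \BUC(\mathbb C^n)$ that
\[ \mathcal T_{lin}^{p,t}(\mathcal E) = \{ A \in \mathcal T^{p,t} : \widetilde A \in \mathcal E \}. \]
Since $\widetilde{A^\ast} = \overline{\widetilde A}$ and $\mathcal T^{2,t}$ is a $C^\ast$-algebra (as $(T_f^t)^\ast = T_{\overline f}^t$), this shows in particular that $\mathcal T_{lin}^{2,t}(\mathcal E)$ is $\ast$-closed whenever $\mathcal E$ is closed under complex conjugation. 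The second ingredient is an explicit integral representation of the Berezin transform of a product of Toeplitz operators: iterating the reproducing-kernel formula for the Berezin transform on $F_t^2$, a form of the quantization estimates of \cite{Bauer_Coburn2015, Bauer_Coburn_Hagger}, one obtains for each $k \geq 1$
\[ (T_{f_1}^t \cdots T_{f_k}^t)^\sim(z) = \int_{(\mathbb C^n)^k} f_1(z + \sqrt t\, u_1) \cdots f_k(z + \sqrt t\, u_k) \, d\gamma_k(u_1, \dots, u_k), \]
where $\gamma_k$ is a fixed complex measure on $(\mathbb C^n)^k$, independent of $t$, with Gaussian decay, finite total variation and total mass $1$. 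Two consequences will be used. (a) If $\mathcal E \subseteq \BUC(\mathbb C^n)$ is closed and $\alpha$-invariant and the products $f_1 \cdots f_k$ occurring lie in $\mathcal E$ whenever the $f_i$ do — for instance if $\mathcal E$ is a subalgebra, or if $\mathcal E$ is an ideal in a subalgebra $\mathcal D_0$, all $f_i$ lie in $\mathcal D_0$, and at least one lies in $\mathcal E$ — then $(u_1, \dots, u_k) \mapsto \bigl[\, z \mapsto \prod_i f_i(z + \sqrt t\, u_i)\, \bigr]$ is bounded and, by uniform continuity of the $f_i$, continuous from $(\mathbb C^n)^k$ to $(\mathcal E, \|\cdot\|_{L^\infty})$, so the Bochner integral above lies in $\mathcal E$. (b) Letting $t \to 0$, dominated convergence against the moduli of continuity of the $f_i$ gives $\| (T_{f_1}^t \cdots T_{f_k}^t)^\sim - f_1 \cdots f_k \|_{L^\infty} \to 0$.

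For the equivalence (i) $\Leftrightarrow$ (ii) together with the $p$-independent conclusion: if $\mathcal D_0$ is a $C^\ast$-algebra, then by (a) the range of every product map $(f_1, \dots, f_k) \mapsto (T_{f_1}^t \cdots T_{f_k}^t)^\sim$ lies in $\mathcal D_0$, so the characterization theorem preceding Corollary \ref{p_independence} gives $\mathcal T_{lin}^{p,t}(\mathcal D_0) = \mathcal T^{p,t}(\mathcal D_0)$ for all $1 < p < \infty$ and $t > 0$; for $p = 2$, conjugation-closedness of $\mathcal D_0$ makes $\{ T_f^t : f \in \mathcal D_0 \}$ adjoint-closed, so this algebra coincides with $\mathcal T_\ast^{2,t}(\mathcal D_0)$, which is (ii). Conversely, if (ii) holds, the same characterization theorem forces $\mathcal D_0$ to be closed under complex conjugation and, for every $t > 0$, under the product map with $k = 2$; thus $(T_f^t T_g^t)^\sim \in \mathcal D_0$ for all $f, g \in \mathcal D_0$ and all $t > 0$, and letting $t \to 0$ via (b) gives $fg \in \overline{\mathcal D_0} = \mathcal D_0$. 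Hence $\mathcal D_0$ is a norm- and conjugation-closed subalgebra of the commutative $C^\ast$-algebra $\BUC(\mathbb C^n)$, i.e. a $C^\ast$-algebra, which is (i).

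For the ideal part, assume $\mathcal D_0$ is a closed, $\alpha$- and $U$-invariant $C^\ast$-subalgebra, so that $\mathcal T_{lin}^{p,t}(\mathcal D_0) = \mathcal T^{p,t}(\mathcal D_0)$ is an algebra, and let $\mathcal I \subseteq \mathcal D_0$ be closed and $\alpha$-invariant. The implication (iii*) $\Rightarrow$ (ii*) is trivial. For (ii*) $\Rightarrow$ (i*): if, say, $\mathcal T_{lin}^{2,t}(\mathcal I)$ is a left ideal in $\mathcal T_{lin}^{2,t}(\mathcal D_0)$ for all $t$, then $T_f^t T_g^t \in \mathcal T_{lin}^{2,t}(\mathcal I)$ for $f \in \mathcal D_0$, $g \in \mathcal I$, so $(T_f^t T_g^t)^\sim \in \mathcal I$ for all $t > 0$, and $t \to 0$ via (b) yields $fg \in \overline{\mathcal I} = \mathcal I$; hence $\mathcal I$ is an ideal in $\mathcal D_0$, the right-ideal case being symmetric. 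For (i*) $\Rightarrow$ (iii*): given $A \in \mathcal T_{lin}^{p,t}(\mathcal D_0)$ and $B \in \mathcal T_{lin}^{p,t}(\mathcal I)$, approximate them in operator norm by Toeplitz operators $T_{g_j}^t$ with $g_j \in \mathcal D_0$ and $T_{h_j}^t$ with $h_j \in \mathcal I$ (possible, since $\widetilde A \in \mathcal D_0$ and $\widetilde B \in \mathcal I$ by the first ingredient, via the approximation \eqref{equation_approximation} and the $\alpha$-invariance of $\mathcal D_0$ and $\mathcal I$). Then $AB = \lim_j T_{g_j}^t T_{h_j}^t$ in operator norm, so $\widetilde{AB} = \lim_j \widetilde{T_{g_j}^t T_{h_j}^t}$ in $\|\cdot\|_{L^\infty}$, and each $\widetilde{T_{g_j}^t T_{h_j}^t}$ lies in $\mathcal I$ by (a) (one factor from $\mathcal I$, one from $\mathcal D_0$, $\mathcal I$ an ideal); hence $\widetilde{AB} \in \mathcal I$, and likewise $\widetilde{BA} \in \mathcal I$. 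By the first ingredient, $AB, BA \in \mathcal T_{lin}^{p,t}(\mathcal I)$, so $\mathcal T_{lin}^{p,t}(\mathcal I) \subseteq \mathcal T_{lin}^{p,t}(\mathcal D_0)$ is a two-sided ideal, for every $p$ and $t$. Finally, applying (a) with all $f_i \in \mathcal I$ and using $\mathcal I \cdot \mathcal I \subseteq \mathcal D_0 \cdot \mathcal I \subseteq \mathcal I$ shows that the range of each product map with $\mathcal I$-inputs lies in $\mathcal I$, whence the characterization theorem gives $\mathcal T_{lin}^{p,t}(\mathcal I) = \mathcal T^{p,t}(\mathcal I)$; this set is closed by definition.

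The correspondence-theoretic bookkeeping is routine; the crux is the second ingredient — setting up the integral representation and verifying that $\gamma_k$ has finite total variation and total mass $1$ and that the integrand is continuous as a $\BUC(\mathbb C^n)$-valued map, which is exactly what makes the Bochner-integral step (a) and the semiclassical limit (b) legitimate. This is where the quantization estimates \cite{Bauer_Coburn2015, Bauer_Coburn_Hagger} are needed; the limit-operator techniques of \cite{Bauer_Isralowitz2012} give an alternative handle on the behaviour at infinity and could be used to the same effect.
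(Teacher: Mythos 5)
Your proof is correct, and for the harder direction $(i) \Rightarrow (ii)$ (and correspondingly $(i^\ast) \Rightarrow (iii^\ast)$) it takes a genuinely different route from the paper. The paper splits the argument over two sections: Section 3.2 handles $(ii) \Rightarrow (i)$ and $(ii^\ast) \Rightarrow (i^\ast)$ by the quantization estimates $\| T_f^t T_g^t - T_{fg}^t\|_{op} \to 0$, $\| \widetilde{fg}^{(t)} - fg\|_\infty \to 0$ — essentially your step (b) — while Section 3.3 handles $(i) \Rightarrow (ii)$ and $(i^\ast) \Rightarrow (iii^\ast)$ by limit-operator machinery: one regards the maximal ideal space $\mathcal M(\mathcal D_0)$ as a non-separating compactification of $\mathbb C^n$, shows that $z \mapsto \alpha_z(AB)$ extends $\operatorname{SOT}^\ast$-continuously to $\mathcal M(\mathcal D_0)$, reads off that $U(AB)^\sim$ extends to a function in $C(\mathcal M(\mathcal D_0))$, and then uses the $U$-invariance of $\mathcal D_0$ to conclude $(AB)^\sim \in \mathcal D_0$. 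You replace all of this by the Bochner-integral argument built on the explicit $t$-independent kernel $\gamma_k$ for $(T_{f_1}^t\cdots T_{f_k}^t)^\sim$. This is more elementary and constructive — no maximal ideal spaces, no Arzel\`a–Ascoli, no $\operatorname{SOT}^\ast$ continuity — at the cost of a computational lemma that the paper never proves.

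That lemma is the crux, and it deserves a full derivation rather than a reference. Iterating the projection formula $\langle T_{f_1}^t \cdots T_{f_k}^t k_z^t, k_z^t\rangle$ and substituting $v_j = z + \sqrt{t}\, u_j$ gives (with the paper's kernel $K^t(z,w) = e^{w\cdot\overline z/t}$) the density
\[ d\gamma_k(u_1,\dots,u_k) \;=\; \frac{1}{\pi^{kn}}\, \exp\Bigl( \sum_{j=1}^{k-1} u_j \cdot \overline{u_{j+1}} \;-\; \sum_{j=1}^k |u_j|^2 \Bigr)\, dV(u_1)\cdots dV(u_k), \]
in which all $z$- and $t$-dependence has cancelled. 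The modulus of the exponential is controlled by the quadratic form with matrix $I - \tfrac12 (T + T^\ast)$, $T$ the unilateral shift on $\mathbb C^k$; its eigenvalues $1 - \cos\bigl(\pi m/(k+1)\bigr)$, $m = 1,\dots,k$, are all strictly positive, so $|\gamma_k|$ is finite for each fixed $k$, and taking $f_1 = \cdots = f_k \equiv 1$ gives $\int d\gamma_k = 1$ by the reproducing property. The references \cite{Bauer_Coburn2015, Bauer_Coburn_Hagger} supply the semiclassical conclusions you draw from this, but — as far as I can tell — not the iterated kernel formula itself, so you should cite them for step (b) but prove the representation directly.

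One further remark: once the kernel lemma is in place, your proof of $(i) \Rightarrow (ii)$ nowhere uses the $U$-invariance of $\mathcal D_0$. The Bochner integral $\int \prod_i \alpha_{-\sqrt t\, u_i}(f_i)\, d\gamma_k(u)$ lands in $\mathcal D_0$ as soon as $\mathcal D_0$ is a closed, $\alpha$-invariant subalgebra, and the characterization theorem preceding Corollary \ref{p_independence} does not assume $U$-invariance either; the same is true of your treatment of the ideal case. The paper's limit-operator argument needs $U$-invariance only because it naturally produces $U(AB)^\sim$ rather than $(AB)^\sim$, and the author explicitly lists the necessity of $U$-invariance as an open question in the closing discussion. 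Your argument, if written out with the kernel lemma, answers that question for this direction. You should make that observation explicit rather than burying it in the hypotheses you carry along.
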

In this theorem, recall that $U \in \mathcal L(F_t^p)$ is the operator $Uf(z) = f(-z)$.
\subsection{Applying quantization estimates}
Certain quantization estimates for the Toeplitz quantization on Fock spaces have been studied intensively in recent years. We apply these estimates now for proving the first part of our theorem.
\begin{prop}\label{proposition_quantization_results}
\begin{enumerate}[1)]
\item Let $\mathcal D_0 \subseteq \BUC(\mathbb C^n)$ be a closed and $\alpha$-invariant subspace such that for all $t > 0$ it holds
\begin{align*}
\mathcal T_{lin}^{p,t} (\mathcal D_0) = \mathcal T^{p,t}(\mathcal D_0).
\end{align*}
Then, $\mathcal D_0$ is a Banach algebra under pointwise multiplication. If it even holds
\begin{align*}
\mathcal T_{lin}^{2,t}(\mathcal D_0) = \mathcal T_{\ast}^{2,t}(\mathcal D_0)
\end{align*}
for all $t > 0$, then $\mathcal D_0$ is even a $C^\ast$ algebra.
\item Let $\mathcal D_0$ be such that it fulfils all assumptions from part 1). Further, let $\mathcal I \subset \mathcal D_0$ be closed and $\alpha$-invariant such that $\mathcal T_{lin}^{p,t}(\mathcal I)$ is either a left- or a right-ideal for all $t > 0$. Then, $\mathcal I$ is an ideal in $\mathcal D_0$.
\end{enumerate}
\end{prop}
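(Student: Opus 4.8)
The plan is to feed the zeroth-order Berezin--Toeplitz quantization estimate of \cite{Bauer_Coburn2015, Bauer_Coburn_Hagger} into the correspondence $\mathcal D_0 \leftrightarrow \mathcal T_{lin}^{p,t}(\mathcal D_0)$ from Proposition~\ref{main_result_correspondence}. The single substantial external input is the semiclassical estimate
\[ \| T_f^t T_g^t - T_{fg}^t \|_{op} \longrightarrow 0 \quad (t \to 0) \]
for $f, g \in \BUC(\mathbb C^n)$. Since the Berezin transform $A \mapsto \widetilde A = P_{\mathbb C} \ast A$ is bounded from $\mathcal L(F_t^p)$ to $L^\infty(\mathbb C^n)$ (Lemma~\ref{lemma_convolution_ainfty}), and since $\widetilde{T_h^t} = \widetilde h^{(t)} = h \ast f_t$ by a direct Gaussian computation, with convolution by $f_t$ an approximate identity on $\BUC(\mathbb C^n)$ (Lemma~\ref{Proposition_approximate_identity}), this estimate upgrades to
\[ \| (T_f^t T_g^t)^{\sim} - fg \|_{L^\infty} \le C \| T_f^t T_g^t - T_{fg}^t \|_{op} + \| (fg) \ast f_t - fg \|_{L^\infty} \longrightarrow 0 \quad (t \to 0) \]
for all $f, g \in \BUC(\mathbb C^n)$, noting that $fg \in \BUC(\mathbb C^n)$. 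This convergence is the engine of both parts.

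For part 1), we fix $f, g \in \mathcal D_0$. By hypothesis $T_f^t T_g^t \in \mathcal T^{p,t}(\mathcal D_0) = \mathcal T_{lin}^{p,t}(\mathcal D_0)$, and since $\mathcal D_0 \oplus \mathcal T_{lin}^{p,t}(\mathcal D_0)$ is a pair (Proposition~\ref{main_result_correspondence} together with Theorem~\ref{Theorem_correspondence_theory}), we get $(T_f^t T_g^t)^{\sim} = P_{\mathbb C} \ast (T_f^t T_g^t) \in \mathcal D_0$ for every $t > 0$. Letting $t \to 0$ in the displayed convergence and using norm-closedness of $\mathcal D_0$ yields $fg \in \mathcal D_0$; since $\|\cdot\|_{L^\infty}$ is submultiplicative and $\mathcal D_0$ is complete, $\mathcal D_0$ is a commutative Banach algebra. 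For the $C^\ast$ refinement, if in addition $\mathcal T_{lin}^{2,t}(\mathcal D_0) = \mathcal T_{\ast}^{2,t}(\mathcal D_0)$, then $T_{\overline f}^t = (T_f^t)^\ast \in \mathcal T_{lin}^{2,t}(\mathcal D_0)$ for $f \in \mathcal D_0$, whence $\widetilde{\overline f}^{(t)} = (T_{\overline f}^t)^{\sim} \in \mathcal D_0$, and $t \to 0$ gives $\overline f \in \mathcal D_0$; thus $\mathcal D_0$ is a commutative $C^\ast$ algebra. This last step is in fact already available from the preceding theorem at a single value of $t$; the limit argument re-derives it.

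For part 2), we take $f \in \mathcal D_0$ and $g \in \mathcal I$. If $\mathcal T_{lin}^{p,t}(\mathcal I)$ is a left ideal in $\mathcal T_{lin}^{p,t}(\mathcal D_0)$ for all $t$, then $T_f^t T_g^t \in \mathcal T_{lin}^{p,t}(\mathcal I)$; since $\mathcal I \oplus \mathcal T_{lin}^{p,t}(\mathcal I)$ is again a pair (Proposition~\ref{main_result_correspondence}), $(T_f^t T_g^t)^{\sim} \in \mathcal I$ for every $t$, and passing to $t \to 0$ with $\mathcal I$ closed yields $fg \in \mathcal I$. If instead $\mathcal T_{lin}^{p,t}(\mathcal I)$ is a right ideal, the same reasoning applied to $T_g^t T_f^t$ gives $(T_g^t T_f^t)^{\sim} \to gf = fg \in \mathcal I$. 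In either case $\mathcal I$ is an ideal in $\mathcal D_0$.

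The only genuinely hard ingredient is the imported quantization estimate; granting it, the argument merely transports membership through the correspondence via $\widetilde{(\cdot)} = P_{\mathbb C} \ast (\cdot)$ and then takes the semiclassical limit, using nothing beyond norm-closedness of $\mathcal D_0$ and $\mathcal I$. The one auxiliary point to verify is that the quantization estimate, usually recorded for $p = 2$, suffices for general $p$: this is automatic, since $(T_f^t T_g^t)^{\sim}$ is given by a $p$-independent integral formula.
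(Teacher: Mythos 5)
Your argument is correct and is essentially the paper's own: use the semiclassical estimate $\|T_f^t T_g^t - T_{fg}^t\|_{op}\to 0$ together with $\|\widetilde{fg}^{(t)} - fg\|_{L^\infty}\to 0$, transport $(T_f^tT_g^t)^{\sim}$ into $\mathcal D_0$ via the pair property $P_{\mathbb C}\ast\mathcal T_{lin}^{p,t}(\mathcal D_0)\subseteq\mathcal D_0$, and conclude by closedness as $t\to 0$. The only cosmetic differences are that the paper first reduces to $p=2$ via Corollary \ref{p_independence} (you instead observe the $p$-independence of $(T_f^tT_g^t)^{\sim}$ directly), and that you spell out the conjugate-closedness step for the $C^\ast$ refinement, which the paper leaves implicit; as you correctly note, that step already follows from the preceding theorem at a single $t$.
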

\begin{proof}
\begin{enumerate}[1)]
\item By Corollary \ref{p_independence}, we may assume that $p = 2$. We need to prove that the pointwise product of two elements is contained in $\mathcal D_0$. Therefore, let $f, g \in \mathcal D_0$. For each $t > 0$, $\mathcal D_0$ is closed under the product
\[ (f,g) \mapsto \widetilde{T_f^t T_g^t}. \]
Since $f, g \in \BUC(\mathbb C^n)$, it holds \cite{Bauer_Coburn_Hagger}
\[ \| T_f^t T_g^t - T_{fg}^t\|_{op} \to 0, \quad t \to 0\]
which implies
\[ \| \widetilde{T_f^t T_g^t} - \widetilde{T_{fg}^t} \|_{L^\infty} \to 0, \quad t \to 0. \]
Since $fg \in \BUC(\mathbb C^n)$, we obtain \cite{Bauer_Coburn2015}
\[ \| fg - \widetilde{fg}^{(t)}\|_{L^\infty} \to 0, \quad t \to 0. \]
Finally,
\begin{align*}
\| \widetilde{T_f^t T_g^t} - fg\|_{L^\infty} &\leq \| \widetilde{T_f^t T_g^t} - \widetilde{T_{fg}^t}\|_{L^\infty} + \| \widetilde{fg}^{(t)} - fg\|_{L^\infty}\\
&\to 0, \quad t \to 0.
\end{align*}
Since $\widetilde{T_f^t T_g^t} \in \mathcal D_0$ for each $t > 0$, and $\mathcal D_0$ is assumed to be norm-closed, the result follows.
\item Follows from analogous reasoning as part 1), assuming $f \in \mathcal D_0$ and $g \in \mathcal I$ or vice versa.
\end{enumerate}
\end{proof}
This proposition of course implies $(ii) \Rightarrow (i)$ and $(ii^\ast) \Rightarrow (i^\ast)$ of Theorem \ref{main_theorem}.

\subsection{Drawing information from limit operators}
In what follows, we will denote by a \textit{non-separating compactification} of $\mathbb C^n$ a pair $(\psi, X)$, where $X$ is a compact topological space $X$ and a continuous map $\psi: \mathbb C^n \to X$ such that $\psi(\mathbb C^n)$ is dense in $X$. Observe that, in contrast to the standard definition of a compactification, we do not assume $\psi$ to be injective - this is why we use the notion of non-separating compactifications. If $(\psi, X)$ and $(\varphi, Y)$ are two non-separating compactifications of $\mathbb C^n$, we say that $(\psi, X)$ and $(\varphi, Y)$ are equivalent (and write $(\psi, X) \sim (\varphi, Y)$) if there is a homeomorphism $\theta: X \to Y$ such that $\theta \circ \psi = \varphi$.

In this sense, there is a 1-1 correspondence between unital $C^\ast$ subalgebras of $C_b(\mathbb C^n)$ and equivalence classes of non-separating compactifications: Given a unital $C^\ast$  subalgebra $\mathcal A$ of $C_b(\mathbb C^n)$, a representative of the equivalence class of non-separating compactifications is given by $(\text{ev}, \mathcal M(\mathcal A))$, where $\mathcal M(\mathcal A)$ is the maximal ideal space of $\mathcal A$ with $w^\ast$ topology, and $\text{ev}$ is the map
\[ \text{ev}: \mathbb C^n \to \mathcal M(\mathcal A), \quad \text{ev}(x)(f) = f(x). \] 
On the other hand, given any non-separating compactification $(\psi, X)$, we define $\mathcal A_{(\psi, X)} \subseteq C_b(\mathbb C^n)$ through
\[ \mathcal A_{(\psi, X)} := \{ f \circ \psi; ~ f \in C(X) \}, \]
which is a unital $C^\ast$ subalgebra of $C_b(\mathbb C^n)$. One readily checks that for each unital $C^\ast$ subalgebra $\mathcal A$ of $C_b(\mathbb C^n)$ and each non-separating compactification $(\psi, X)$ of $\mathbb C^n$ we have
\begin{align} \label{algebra_equality}
\mathcal A_{(\text{ev}, \mathcal M (\mathcal A))} &= \mathcal A,\\
(\psi, X) &\sim (\text{ev}, \mathcal M(\mathcal A_{(\psi, X)})) \nonumber.
\end{align} 
Observe that this correspondence restricts to a 1-1 correspondence between compactifications in the usual sense (i.e. where the map $\psi$ is further assumed to be injective) and unital $C^\ast$ subalgebras of $C_b(\mathbb C^n)$ which separate points. 

In what follows, let $\mathcal A$ denote an $\alpha$- and $U$-invariant unital $C^\ast$ subalgebra of $\BUC(\mathbb C^n)$, i.e. for each $f \in \mathcal A$ and $z \in \mathbb C^n$ we have $\alpha_z(f) \in \mathcal A$ and $Uf \in \mathcal A$. Under these assumptions, one can show that $\mathcal M(\mathcal A)$ is actually a compactification (in the usual sense) of $\mathbb C^n/\Gamma$ for some closed additive subgroup $\Gamma$ of $\mathbb C^n$. While this is not important in the following, it serves as a geometric picture for what is going on.

We will always identify $\mathbb C^n$ with its image in the compactification corresponding to $\mathcal A$. Let $f \in \mathcal A$ and $z \in \mathbb C^n$. For $z, w \in \mathbb C^n$ we have
\begin{equation}\label{shifted_function}
\alpha_z(f)(w) = \text{ev}(z)(U\alpha_w(f)).
\end{equation}
For $x \in \mathcal M(\mathcal A)$ and $w \in \mathbb C^n$ we define
\[ f_x(w) = x(U\alpha_w(f)), \]
where we will abuse the notation $f_z = f_{\text{ev}(z)}$ for $z \in \mathbb C^n$, which is justified by Equation (\ref{shifted_function}). As in \cite{Bauer_Isralowitz2012}, one proves that $f_x \in \BUC(\mathbb C^n)$ for all $x \in \mathcal M(\mathcal A)$. Let $(z_\gamma)_\gamma \subset \mathbb C^n$ be any net converging to $x \in \mathcal M(\mathcal A)$. Then, $\alpha_{z_\gamma}(f) = f_{z_\gamma} \to f_x$ pointwise. An easy application of the Arzel\`{a}-Ascoli theorem shows that this convergence is even uniform on compact subsets. On the level of Toeplitz operators, one can then show that $\alpha_{z_\gamma}(T_f^t) = T_{\alpha_{z_\gamma}(f)}^t \overset{\gamma}{\to} T_{f_x}^t$, where the convergence is in strong operator topology \cite{Bauer_Isralowitz2012}. This has the following important consequence, which follows as in \cite{Bauer_Isralowitz2012} with only minor changes (since we are using a possibly smaller compactification of $\mathbb C^n$ than $\mathcal M(\BUC(\mathbb C^n))$):
\begin{prop}
For each $A \in \mathcal T_{lin}^{p,t}(\mathcal A)$, the map
\[ \mathbb C^n \to \mathcal T_{lin}^{p,t}(\mathcal A), \quad z \mapsto \alpha_z(A) \]
extends to a continuous map
\[ \mathcal M(\mathcal A) \to (\mathcal T_{lin}^{p,t}(\BUC(\mathbb C^n)), ~SOT^\ast), \quad x \mapsto A_x,\]
which is norm-bounded by $\| A\|_{op}$. Here, $SOT^\ast$ denotes the strong$^\ast$ operator topology, i.e. both functions $x \mapsto A_x$ and its Banach space adjoint (understood in the dual pairing coming from $F_t^2$) $x \mapsto A_x^\ast$ are continuous with respect to the strong operator topology.
\end{prop}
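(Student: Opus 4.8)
The plan is to build the extension $x \mapsto A_x$ first on the Toeplitz operators $T_f^t$, $f \in \mathcal A$, that generate $\mathcal T_{lin}^{p,t}(\mathcal A)$, then on their finite linear combinations, and finally on the operator-norm closure. For a generator I would simply declare $A_x := T_{f_x}^t$ with $f_x(w) = x(U\alpha_w(f))$; this is legitimate because $\mathcal A$ is $\alpha$- and $U$-invariant, and by \eqref{shifted_function} it recovers $z \mapsto T_{\alpha_z(f)}^t = \alpha_z(T_f^t)$ on $\mathbb C^n \subseteq \mathcal M(\mathcal A)$. The facts recalled just before the proposition --- that $\{f_x : x \in \mathcal M(\mathcal A)\}$ is uniformly bounded and equicontinuous in $\BUC(\mathbb C^n)$, with modulus of continuity inherited from $f$, and that $z_\gamma \to x$ forces $f_{z_\gamma} \to f_x$ uniformly on compact sets --- combine with the pointwise continuity $x \mapsto x(U\alpha_w(f))$ of Gelfand transforms and the Arzel\`{a}--Ascoli theorem to make $x \mapsto f_x$ continuous from $\mathcal M(\mathcal A)$ into $C(\mathbb C^n)$ with the topology of uniform convergence on compacta. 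Feeding a polynomial into $P_t$ and using dominated convergence against the Gaussian weight, together with the uniform bound $\sup_x \|T_{f_x}^t\|_{op} \le \|f\|_{L^\infty}$ and density of polynomials, yields that $x \mapsto T_{f_x}^t$ is strongly continuous; since $\mathcal A$ is a $C^\ast$ algebra, $\overline f \in \mathcal A$ and $(\overline f)_x = \overline{f_x}$, so $(T_{f_x}^t)^\ast = T_{\overline{f_x}}^t$ and the same argument treats the adjoints, which is exactly $SOT^\ast$-continuity. (For $p \neq 2$ the Banach-space adjoints act on $F_t^q$, so $SOT^\ast$ means strong continuity of $x \mapsto A_x$ into $\mathcal L(F_t^p)$ and of $x \mapsto A_x^\ast$ into $\mathcal L(F_t^q)$ in the $\langle\cdot,\cdot\rangle_{F_t^2}$-duality; this is only bookkeeping.)

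Extending to finite linear combinations is immediate from linearity of strong limits, with $A_x^\ast$ the corresponding combination of the $T_{\overline{(f_j)_x}}^t$. For $A \in \mathcal T_{lin}^{p,t}(\mathcal A)$ I would pick $A^{(k)}$ in the linear span of $\{T_f^t : f \in \mathcal A\}$ with $A^{(k)} \to A$ in operator norm. The crucial structural fact is that every $\alpha_z$ ($z \in \mathbb C^n$) acts isometrically on $\mathcal L(F_t^p)$, since $W_z$ is an isometric isomorphism of $F_t^p$; hence $\|A^{(k)}_z - A^{(\ell)}_z\|_{op} = \|A^{(k)} - A^{(\ell)}\|_{op}$ on $\mathbb C^n$, and passing to strong limits this upgrades to $\|A^{(k)}_x - A^{(\ell)}_x\|_{op} \le \|A^{(k)} - A^{(\ell)}\|_{op}$ for every $x \in \mathcal M(\mathcal A)$. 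Thus $(A^{(k)}_x)_k$ is operator-norm Cauchy uniformly in $x$, and I define $A_x := \lim_k A^{(k)}_x$. A uniform-in-$x$ operator-norm limit of $SOT^\ast$-continuous maps is again $SOT^\ast$-continuous; the extension restricts to $z \mapsto \alpha_z(A)$ on $\mathbb C^n$ because $A^{(k)}_{\text{ev}(z)} = \alpha_z(A^{(k)}) \to \alpha_z(A)$; the bound $\|A_x\|_{op} \le \|A\|_{op}$ follows from $\|A^{(k)}_x\|_{op} \le \|A^{(k)}\|_{op}$ and $\|A^{(k)}\|_{op} \to \|A\|_{op}$; and $A_x \in \mathcal T_{lin}^{p,t}(\BUC(\mathbb C^n))$ because it is an operator-norm limit of finite linear combinations of Toeplitz operators with $\BUC$ symbols and this space is closed (indeed it equals $\mathcal C_1 = \mathcal T^{p,t}$ by Theorem \ref{Toeplitz_algebra_linearly_generated}).

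I expect the real obstacle to be disciplining the non-metrizable strong$^\ast$ operator topology: one must verify continuity of $x \mapsto A_x$ at \emph{every} point of $\mathcal M(\mathcal A)$, not just along nets coming from $\mathbb C^n$, so the equicontinuity of $\{f_x\}$ and the Arzel\`{a}--Ascoli step in the first paragraph are carrying the essential weight --- they reduce operator-level continuity to the trivial continuity of Gelfand transforms. Everything else is soft and follows the pattern of \cite{Bauer_Isralowitz2012}, using only that the $W_z$ are isometric isomorphisms and that $\alpha_z$ is isometric on $\mathcal L(F_t^p)$; the one place to be slightly careful with constants is that $\|T_f^t\|_{op}$ may be strictly smaller than $\|f\|_{L^\infty}$, so the final norm estimate should be obtained through the isometry of $\alpha_z$ and the approximants $A^{(k)}$ rather than from the sup norm of the symbol.
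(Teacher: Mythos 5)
Your argument is correct and matches what the paper intends when it defers to \cite{Bauer_Isralowitz2012}: Arzel\`a--Ascoli gives compact-open continuity of $x \mapsto f_x$, hence strong$^\ast$ convergence $T^t_{f_{z_\gamma}} \to T^t_{f_x}$ for generators, and the extension to the norm closure uses the isometry of $\alpha_z$ on $\mathcal L(F_t^p)$ together with lower semicontinuity of the operator norm under strong limits. The points you flag --- well-definedness of $A_x$ on linear spans, the uniform-in-$x$ operator-norm Cauchy estimate, $SOT^\ast$-stability under uniform norm limits, and deriving the final bound from the isometry of $\alpha_z$ rather than $\|f\|_{L^\infty}$ --- are exactly the routine ``minor changes'' the paper leaves implicit.
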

\begin{rem}
For each $A \in \mathcal T_{lin}^{p,t}(\mathcal A)$, the operators $A_x$ ($x \in \mathcal M(\mathcal A) \setminus \mathbb C^n$) are called the \emph{limit operators} of $A$. They are of independent interest, as they can be used to investigate the Fredholm property and essential spectrum of $A$ \cite{Fulsche_Hagger}.
\end{rem}
Let $A, B \in \mathcal T_{lin}^{p,t}(\mathcal A)$. Then, both $z \mapsto \alpha_z(A)$ and $z \mapsto \alpha_z(B)$ are uniformly bounded in norm by $\| A\|_{op}$ and $\| B\|_{op}$, respectively. Therefore,
\[ z \mapsto \alpha_z(AB) = \alpha_z(A) \alpha_z(B) \]
extends to a strongly continuous map 
\[ \mathcal M(\mathcal A) \ni x \mapsto (AB)_x = A_x B_x \in \mathcal T^{p,t}, \]
i.e. passing to the limit operators is multiplicative. Further, since $(\mathcal L(F_t^p))^\ast \ni W_z^\ast = W_{-z} \in \mathcal L(F_t^q)$ ($1/p + 1/q = 1$), one sees that $(A_x)^\ast = (A^\ast)_x$.
\begin{prop}\label{proposition_unital_algebra}
Let $\mathcal A$ be a unital, $\alpha$- and $U$-invariant C$^\ast$ subalgebra of $\BUC(\mathbb C^n)$. Then we have
\[ \mathcal T_{lin}^{p,t}(\mathcal A) = \mathcal T^{p,t}(\mathcal A) \]
and
\[ \mathcal T_{lin}^{2,t}(\mathcal A) = \mathcal T_{\ast}^{2,t}(\mathcal A) \]
for all $t > 0$ and $1 < p < \infty$.
\end{prop}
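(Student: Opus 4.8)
The plan is to reduce the statement, via the characterization theorem established above (the one asserting that $\mathcal T_{lin}^{p,t}(\mathcal D_0)=\mathcal T^{p,t}(\mathcal D_0)$ if and only if $\mathcal D_0$ is closed under all the product maps $(f_1,\dots,f_k)\mapsto(T_{f_1}^t\cdots T_{f_k}^t)^{\sim}$, and $\mathcal T_{lin}^{2,t}(\mathcal D_0)=\mathcal T_{\ast}^{2,t}(\mathcal D_0)$ if and only if in addition $\mathcal D_0$ is conjugation-closed), to the single assertion: for every $t>0$, every $k$, and all $f_1,\dots,f_k\in\mathcal A$, the Berezin transform $(T_{f_1}^t\cdots T_{f_k}^t)^{\sim}$ lies again in $\mathcal A$. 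Closure of $\mathcal A$ under complex conjugation, needed for the $C^\ast$-statement, is automatic since $\mathcal A$ is a $C^\ast$-algebra; and the product-map condition does not involve $p$, so once it is verified for all $t$ we get both conclusions for all $1<p<\infty$ and all $t>0$ at once.

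So, fix $t>0$, $1<p<\infty$, $f_1,\dots,f_k\in\mathcal A$, and set $A:=T_{f_1}^t\cdots T_{f_k}^t\in\mathcal C_1$. I would first extend $z\mapsto\alpha_z(A)$ to the compactification $\mathcal M(\mathcal A)$. Each $T_{f_j}^t$ lies in $\mathcal T_{lin}^{p,t}(\mathcal A)$, so $z\mapsto\alpha_z(T_{f_j}^t)=T_{\alpha_z f_j}^t$ extends to a $SOT^\ast$-continuous map $x\mapsto T_{(f_j)_x}^t$ on $\mathcal M(\mathcal A)$, uniformly bounded in operator norm by $\|f_j\|_{L^\infty}$. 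Since $\alpha_z$ is an algebra homomorphism and a finite product of uniformly norm-bounded $SOT^\ast$-convergent nets converges in $SOT^\ast$ to the product of the limits, an induction on $k$ built on the multiplicativity of the limit-operator assignment already established shows that
\[ z\mapsto\alpha_z(A)=\alpha_z(T_{f_1}^t)\cdots\alpha_z(T_{f_k}^t) \]
extends to the $SOT^\ast$-continuous — in particular weak-operator-continuous — map $x\mapsto A_x:=T_{(f_1)_x}^t\cdots T_{(f_k)_x}^t$ on $\mathcal M(\mathcal A)$.

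Next I would read off the Berezin transform against the characters. Using $k_z^t=W_z1$ and $W_z^\ast=W_{-z}$ in the $F_t^2$-pairing, for every $z\in\mathbb C^n$ one has
\[ \widetilde A(z)=\langle AW_z1,W_z1\rangle_{F_t^2}=\langle W_{-z}AW_z1,1\rangle_{F_t^2}=\langle\alpha_{-z}(A)1,1\rangle_{F_t^2}, \]
so the function $z\mapsto\langle\alpha_z(A)1,1\rangle_{F_t^2}$ is exactly $U\widetilde A$. Because $1\in F_t^p\cap F_t^q$ with $1/p+1/q=1$, the functional $B\mapsto\langle B1,1\rangle_{F_t^2}$ is weak-operator continuous on $\mathcal L(F_t^p)$; composing it with $x\mapsto A_x$ produces a continuous function $H\in C(\mathcal M(\mathcal A))$ with $H\circ\text{ev}=U\widetilde A$ on $\mathbb C^n$. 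Hence $U\widetilde A\in\mathcal A_{(\text{ev},\mathcal M(\mathcal A))}=\mathcal A$ by \eqref{algebra_equality}, and $U$-invariance of $\mathcal A$ gives $\widetilde A\in\mathcal A$. Running this for all $k$ and all $t>0$, the characterization theorem yields $\mathcal T_{lin}^{p,t}(\mathcal A)=\mathcal T^{p,t}(\mathcal A)$ for all $p,t$ and, adding the conjugation closure, $\mathcal T_{lin}^{2,t}(\mathcal A)=\mathcal T_{\ast}^{2,t}(\mathcal A)$ for all $t$.

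The step I expect to be the main obstacle is promoting the $\mathbb C^n$-indexed convergences to honest continuity of $x\mapsto A_x$ over the whole compactification $\mathcal M(\mathcal A)$ in a topology at least as strong as the weak operator topology: this is precisely where the uniform bounds $\|\alpha_z(T_{f_j}^t)\|_{op}\le\|f_j\|_{L^\infty}$ and the multiplicativity of the limit-operator map are indispensable, and also where the hypothesis that $\mathcal A$ be \emph{unital} enters (it is what makes $\mathcal M(\mathcal A)$ a genuine compactification) and where $U$-invariance is finally used. Once this is in place, the identity $\langle\alpha_z(A)1,1\rangle=\widetilde A(-z)$ together with Gelfand duality closes the argument with only routine verifications.
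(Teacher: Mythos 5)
Your argument is correct and follows essentially the same route as the paper: extend $z\mapsto\alpha_z(\cdot)$ over the compactification $\mathcal M(\mathcal A)$ via the limit-operator machinery, read off $U\widetilde{A}$ through the continuous functional $\langle\,\cdot\,1,1\rangle_{F_t^2}$, conclude $U\widetilde A\in C(\mathcal M(\mathcal A))\cong\mathcal A$, and use $U$-invariance together with the correspondence/characterization theorem. The paper simply reduces first to $p=2$ by Corollary \ref{p_independence} and works directly with $AB$ for $A,B\in\mathcal T_{lin}^{2,t}(\mathcal A)$ rather than with products of $k$ Toeplitz operators, but these are cosmetic variations of the same proof.
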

\begin{proof}
Observe that the second statement and the first statement are equivalent, since the first is $p$-independent by Corollary \ref{p_independence} and $\mathcal T^{2,t}(\mathcal A) = \mathcal T_{\ast}^{2,t}(\mathcal A)$ follows since $\mathcal A$ is closed under taking complex conjugates. Therefore, it suffices to prove that $\mathcal T_{lin}^{2,t}(\mathcal A)$ is closed under taking products.

Let $A, B \in \mathcal T_{lin}^{2,t}(\mathcal A)$. As noted above,
\[ z \mapsto \alpha_z(AB) \]
extends to a strongly continuous map
\[ \mathcal M(\mathcal A) \ni x \mapsto (AB)_x \in \mathcal T^{2,t}. \]
For the Berezin transform of the product, we have
\[ U(AB)^\sim(z) = (AB)^\sim(-z) = (\alpha_z(AB))^{\sim}(0) = \langle (AB)_z1, 1\rangle_{F_t^2}, \]
which extends by strong continuity of $x \mapsto (AB)_x$ to $U(AB)^\sim \in C(\mathcal M(\mathcal A))$. Hence, $U(AB)^\sim \in \mathcal A$ by (\ref{algebra_equality}) and, by $U$-invariance, also $(AB)^\sim \in \mathcal A$. Therefore, Theorem \ref{Theorem_correspondence_theory}(3) yields that $AB \in \mathcal T_{lin}^{2,t}(\mathcal A) \leftrightarrow \mathcal A$, which finishes the proof.
\end{proof}
We say that $\mathcal I \subseteq \mathcal A$ is an $\alpha$-invariant ideal of $\mathcal A$ if it is a closed ideal of $\mathcal A$ such that $f \in \mathcal I$ and $z \in \mathbb C^n$ imply $\alpha_z(f) \in \mathcal I$. We will classify such ideals now.

Recall that the closed ideals $\mathcal I$ in $\mathcal A \cong C(\mathcal M(\mathcal A))$ are in 1-1 correspondence with closed subsets $I$ of $\mathcal M(\mathcal A)$ via
\begin{align*}
\mathcal I_{I} &= \{ f \in \mathcal A;~ x(f) = 0 \text{ for all } x \in I\}\\
&\cong \{ f \in C(\mathcal M(\mathcal A)); ~f(x) = 0 \text{ for all } x \in I\},\\
I_{\mathcal I} &= \{ x \in \mathcal M(\mathcal A); ~x(f) = 0 \text{ for all } f \in \mathcal I\}.
\end{align*}
One easily sees that for $I \subseteq \mathcal M(\mathcal A)$ closed such that $I \cap \mathbb C^n \neq \emptyset$ and $\mathcal I_{I}$ translation-invariant, one necessarily has $I = \mathcal M(\mathcal A)$ or equivalently $\mathcal I_{I} = \{ 0\}$. Therefore, proper translation-invariant ideals ``live at the boundary''.

Since $\mathcal A$ is assumed to be $\alpha$- and $U$-invariant, we obtain induced group actions on $\mathcal M(\mathcal A)$ via
\begin{equation*}
\alpha_z(x)(f) = x(\alpha_z(f)), \quad U(x)(f) = x(Uf).
\end{equation*}
It is not difficult to see that these actions leave $\mathbb C^n$ and $\mathcal M(\mathcal A) \setminus \mathbb C^n$ invariant.
\begin{prop}
A closed subset $I \subset \mathcal M(\mathcal A) \setminus \mathbb C^n$ gives rise to an $\alpha$-invariant ideal $\mathcal I_I$ if and only if $I$ is closed under the action of $\alpha$.
\end{prop}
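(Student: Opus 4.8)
The plan is to reduce the statement to the Gelfand correspondence between closed ideals of $\mathcal A \cong C(\mathcal M(\mathcal A))$ and closed subsets of $\mathcal M(\mathcal A)$ recalled above, combined with the observation that each $\alpha_z$ induces a homeomorphism of $\mathcal M(\mathcal A)$.

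First I would record the structural facts. Since $\mathcal A$ is $\alpha$-invariant and $\alpha_z\alpha_w = \alpha_{z+w}$, each $\alpha_z \colon \mathcal A \to \mathcal A$ is an isometric $\ast$-automorphism with inverse $\alpha_{-z}$. Its transpose therefore restricts to a homeomorphism of the maximal ideal space, which is precisely the induced action $x \mapsto \alpha_z(x)$, $\alpha_z(x)(f) = x(\alpha_z(f))$; in particular $\alpha_z$ is bijective on $\mathcal M(\mathcal A)$ with inverse $\alpha_{-z}$, and $\alpha_{-z}(I)$ is a closed subset of $\mathcal M(\mathcal A)$ whenever $I$ is. (That $\alpha_z$ preserves the decomposition of $\mathcal M(\mathcal A)$ into $\mathbb C^n$ and $\mathcal M(\mathcal A)\setminus\mathbb C^n$ was already noted, so the hypothesis $I \subset \mathcal M(\mathcal A)\setminus\mathbb C^n$ is automatically respected by the action.)

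The core of the proof is the identity $\alpha_z(\mathcal I_I) = \mathcal I_{\alpha_{-z}(I)}$ for every closed $I \subseteq \mathcal M(\mathcal A)$ and every $z \in \mathbb C^n$, which I would verify by the chain of equivalences: $f \in \alpha_z(\mathcal I_I)$ iff $\alpha_{-z}(f) \in \mathcal I_I$ iff $x(\alpha_{-z}(f)) = 0$ for all $x \in I$ iff $\alpha_{-z}(x)(f) = 0$ for all $x \in I$ iff $f \in \mathcal I_{\alpha_{-z}(I)}$, using only the definition of $\mathcal I_I$ and of the induced action. Now $\mathcal I_I$ is an $\alpha$-invariant ideal precisely when $\alpha_z(\mathcal I_I) \subseteq \mathcal I_I$ for all $z$; applying $\alpha_{-z}$ to this containment shows it actually forces $\alpha_z(\mathcal I_I) = \mathcal I_I$ for all $z$. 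By the identity above, this is equivalent to $\mathcal I_{\alpha_{-z}(I)} = \mathcal I_I$ for all $z$, and by bijectivity of the correspondence $I \leftrightarrow \mathcal I_I$ between closed subsets and closed ideals this holds if and only if $\alpha_{-z}(I) = I$ for all $z$, i.e. if and only if $I$ is closed under the action of $\alpha$, which is the claim.

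I do not expect a genuine obstacle here: the argument is a routine transport of structure through a known duality. The only points requiring a little care are keeping the two dual actions (on $\mathcal A$ and on $\mathcal M(\mathcal A)$) and their direction conventions straight, and invoking that the ideal--hull correspondence is an honest bijection, so that equality of ideals reflects back to equality of the corresponding closed subsets (equivalently, that continuous functions separate a point of $\mathcal M(\mathcal A)$ from a closed set not containing it).
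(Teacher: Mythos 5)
Your proof is correct and follows essentially the same route as the paper: both arguments rest on the duality between the action of $\alpha_z$ on $\mathcal A$ and the induced action on $\mathcal M(\mathcal A)$ via $\alpha_z(x)(f) = x(\alpha_z(f))$, combined with the Gelfand hull--ideal correspondence. The paper just states the forward implication (``$I$ $\alpha$-invariant $\Rightarrow$ $\mathcal I_I$ $\alpha$-invariant'') and says the converse works similarly; you package both directions into the single identity $\alpha_z(\mathcal I_I) = \mathcal I_{\alpha_{-z}(I)}$ and then invoke bijectivity of $I \leftrightarrow \mathcal I_I$, which is a cleaner organization of the same idea but not a different method.
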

\begin{proof}
We just demonstrate that $\alpha$-invariance of $I$ implies $\alpha$-invariance of $\mathcal I_I$, the other direction of the proof works similarly. We consider $\mathcal I_I$ as an ideal in $C(\mathcal M(\mathcal A))$. Let $f \in \mathcal I_I$, i.e. $f(x) = 0$ for all $x \in I$. Then, if $z \in \mathbb C^n$ we have $\alpha_z(f)(x) = f(\alpha_z(x)) = 0$ for every $x \in I$, since $\alpha_z(x) \in I$.
\end{proof}
\begin{prop}\label{theorem_ideals}
Let $\mathcal A \subseteq \BUC(\mathbb C^n)$ be a unital, $\alpha$- and $U$-invariant $C^\ast$ subalgebra. If $\mathcal I \subseteq \mathcal A$ is a closed and $\alpha$-invariant ideal of $\mathcal A$, then $\mathcal T_{lin}^{p,t}(\mathcal I)$ is a closed and two-sided $\alpha$-invariant ideal in $\mathcal T_{lin}^{p,t}(\mathcal A)$ for all $t > 0$.
\end{prop}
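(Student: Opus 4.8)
The plan is to run everything through the correspondence $\mathcal I \leftrightarrow \mathcal T_{lin}^{p,t}(\mathcal I)$ of Proposition \ref{main_result_correspondence} and the Berezin-transform criterion in Theorem \ref{Theorem_correspondence_theory}(3): an operator $C \in \mathcal C_1 = \mathcal T^{p,t}$ already lies in $\mathcal T_{lin}^{p,t}(\mathcal I)$ as soon as $\widetilde{C} = P_{\mathbb C} \ast C \in \mathcal I$. The inclusion $\mathcal T_{lin}^{p,t}(\mathcal I) \subseteq \mathcal T_{lin}^{p,t}(\mathcal A)$ and its $\alpha$-invariance are immediate (the latter from $\alpha_z(T_f^t) = T_{\alpha_z(f)}^t$ and $\alpha$-invariance of $\mathcal I$), and closedness holds by definition of $\mathcal T_{lin}$. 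So the whole content is: for $A \in \mathcal T_{lin}^{p,t}(\mathcal A)$ and $B \in \mathcal T_{lin}^{p,t}(\mathcal I)$ one has $AB, BA \in \mathcal T_{lin}^{p,t}(\mathcal I)$, and by the criterion this reduces to showing $\widetilde{AB}, \widetilde{BA} \in \mathcal I$.

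First I would note $AB \in \mathcal C_1$ (as $\mathcal C_1$ is a Banach algebra and $A, B \in \mathcal T_{lin}^{p,t}(\mathcal A) \subseteq \mathcal C_1$), and by Proposition \ref{proposition_unital_algebra}, $AB \in \mathcal T_{lin}^{p,t}(\mathcal A) \leftrightarrow \mathcal A$, so $\widetilde{AB} \in \mathcal A \cong C(\mathcal M(\mathcal A))$. It then remains to check that $\widetilde{AB}$ vanishes on the hull $I_{\mathcal I} = \{x \in \mathcal M(\mathcal A) : x(f) = 0 \text{ for all } f \in \mathcal I\}$. For this I would use the limit operator formalism of Section 3.3 exactly as in the proof of Proposition \ref{proposition_unital_algebra}: $U\widetilde{AB}$ extends continuously to $\mathcal M(\mathcal A)$ with $(U\widetilde{AB})(x) = \langle (AB)_x 1, 1\rangle_{F_t^2}$, and since $U$ induces an action on $\mathcal M(\mathcal A)$ with $x(\widetilde{AB}) = x(U(U\widetilde{AB})) = (Ux)(U\widetilde{AB})$, together with multiplicativity of $x \mapsto (AB)_x$ this gives $x(\widetilde{AB}) = \langle A_{Ux} B_{Ux} 1, 1\rangle_{F_t^2}$.

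The decisive point is that the limit operators of $B$ vanish at the relevant boundary points. If $x \in I_{\mathcal I}$ and $y = Ux$, then $y \in I_{U\mathcal I}$, the hull of the (again closed, $\alpha$-invariant) ideal $U\mathcal I$: indeed for $g = Uh \in U\mathcal I$ one has $Ug = h \in \mathcal I$, so $y(g) = (Ux)(g) = x(Ug) = x(h) = 0$. Now for any $f \in \mathcal I$, $\alpha$-invariance of $\mathcal I$ gives $U\alpha_w(f) \in U\mathcal I$, hence $f_y(w) = y(U\alpha_w(f)) = 0$ for all $w$, i.e. $(T_f^t)_y = T_{f_y}^t = 0$. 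Since $C \mapsto C_y$ is linear and norm-bounded by $\|C\|_{op}$, and $B$ is a norm-limit of finite linear combinations of such $T_f^t$, we get $B_y = 0$, so $(AB)_{Ux} = A_{Ux} B_{Ux} = 0$ and $x(\widetilde{AB}) = 0$ for every $x \in I_{\mathcal I}$. Therefore $\widetilde{AB} \in \mathcal I$, and Theorem \ref{Theorem_correspondence_theory}(3) applied to the pair $\mathcal I \leftrightarrow \mathcal T_{lin}^{p,t}(\mathcal I)$ gives $AB \in \mathcal T_{lin}^{p,t}(\mathcal I)$. The same argument with $(BA)_{Ux} = B_{Ux} A_{Ux} = 0$ handles $BA$, completing the proof of the two-sided ideal property; $\alpha$-invariance was already observed.

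The main obstacle is bookkeeping rather than a deep new idea: one must keep the two appearances of $U$ straight — the one built into the Berezin transform and the correspondence, and the one coming from $U$-invariance of $\mathcal A$ — so as to land on the hull of $U\mathcal I$, and one must verify that the limit operator map passes to norm limits so that vanishing on the generators $T_f^t$, $f \in \mathcal I$, propagates to all of $B$. One could alternatively first reduce to $p = 2$ via Corollary \ref{corollary_ideal} (legitimate here since $\mathcal T_{lin}^{p,t}(\mathcal A) = \mathcal T^{p,t}(\mathcal A)$ by Proposition \ref{proposition_unital_algebra}), but this is not needed because the limit operator apparatus is available for all $1 < p < \infty$.
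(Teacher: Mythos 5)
Your proof is correct and follows essentially the same route as the paper: reduce via correspondence theory to showing $\widetilde{AB},\widetilde{BA}\in\mathcal I$, use the limit-operator extension and the formula $(AB)^\sim(x)=\langle (AB)_{Ux}1,1\rangle_{F_t^2}$, and show that the limit operators of the factor in $\mathcal T_{lin}^{p,t}(\mathcal I)$ vanish at points $Ux$ with $x$ in the hull of $\mathcal I$. The only differences are cosmetic: you unpack the $U$-conjugation bookkeeping via the hull of $U\mathcal I$ where the paper just invokes $\alpha$-invariance of $I$ directly, you make the passage from generators $T_f^t$ to general $B$ by norm limits explicit, and you note (correctly) that the reduction to $p=2$ the paper performs is optional since the limit-operator apparatus is $p$-independent.
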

\begin{proof}
By an argument analogous to Corollary \ref{corollary_ideal}, it suffices to prove that $\mathcal T_{lin}^{2,t}(\mathcal I)$ is an ideal in $\mathcal T_{lin}^{2,t}(\mathcal A)$.

Let $\mathcal I = \mathcal I_{I}$ with $I \subseteq \mathcal M(\mathcal A) \setminus \mathbb C^n$ $\alpha$-invariant. Let $f \in \mathcal I$. Using the invariance of $I$, one obtains $f_{Ux} = 0$ for each $x \in I$. Hence, for each operator $A \in \mathcal T_{lin}^{2,t}(\mathcal I)$ we have $A_{Ux} = 0$ for $x \in I$. Let $B \in \mathcal T_{lin}^{2,t}(\mathcal A)$. Then, we have $(BA)_{Ux} = B_{Ux} A_{Ux}= 0$ and $(AB)_{Ux} = A_{Ux} B_{Ux} = 0$ for each $x \in I$.

As in the proof of Proposition \ref{proposition_unital_algebra} and by the above discussion, we obtain for every $x \in I$
\begin{equation*}
(AB)^\sim(x) = \langle (AB)_{Ux} 1, 1\rangle_{F_t^2} = 0
\end{equation*}
and analogously for $(BA)^\sim$, i.e. both $(AB)^\sim$ and $(BA)^\sim$ extend to functions in $C(\mathcal M(\mathcal A))$ which vanish on $I$, i.e $(AB)^\sim, (BA)^\sim \in \mathcal I$. Thus, Theorem \ref{Theorem_correspondence_theory}(3) yields $AB, BA \in \mathcal T_{lin}^{2,t}(\mathcal I)$, which is therefore an ideal in $\mathcal T_{lin}^{2,t}(\mathcal A)$.
\end{proof}
We are now in the position to drop the assumption that the $C^\ast$ algebra $\mathcal A$ contains the unit element:
\begin{cor}
Let $\mathcal A \subseteq \BUC(\mathbb C^n)$ be an $\alpha$- and $U$-invariant $C^\ast$ subalgebra. Then, we have
\[ \mathcal T_{lin}^{p,t}(\mathcal A) = \mathcal T^{p,t}(\mathcal A) \]
and
\[ \mathcal T_{lin}^{2,t}(\mathcal A) = \mathcal T_{\ast}^{2,t}(\mathcal A) \]
for every $1 < p < \infty$ and $t > 0$. If further $\mathcal I \subseteq \mathcal A$ is an $\alpha$-invariant closed ideal, then $\mathcal T_{lin}^{p,t}(\mathcal I)$ is a two-sided ideal in $\mathcal T_{lin}^{p,t}(\mathcal A)$ for all $t > 0$, $1 < p < \infty$.
\end{cor}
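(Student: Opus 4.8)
The plan is to reduce both claims to the unital case, which is settled in Proposition~\ref{proposition_unital_algebra} and Proposition~\ref{theorem_ideals}, by adjoining a unit. Set $\mathcal{A}^{+} := \mathcal{A} + \mathbb{C}\cdot 1 \subseteq \BUC(\mathbb{C}^{n})$. Since the constant function $1$ is fixed by every $\alpha_{z}$ and by $U$, the space $\mathcal{A}^{+}$ is again a closed, $\alpha$- and $U$-invariant $C^{\ast}$ subalgebra of $\BUC(\mathbb{C}^{n})$, and it is unital; hence Proposition~\ref{proposition_unital_algebra} applies to $\mathcal{A}^{+}$ and gives $\mathcal{T}_{lin}^{p,t}(\mathcal{A}^{+}) = \mathcal{T}^{p,t}(\mathcal{A}^{+})$, as well as $\mathcal{T}_{lin}^{2,t}(\mathcal{A}^{+}) = \mathcal{T}_{\ast}^{2,t}(\mathcal{A}^{+})$, for all admissible $p$ and $t$.

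Next I would note that $\mathcal{A}$ is a closed, $\alpha$-invariant ideal of $\mathcal{A}^{+}$: if $\mathcal{A}$ is non-unital it is the kernel of the character $f + \lambda 1 \mapsto \lambda$ on $\mathcal{A}^{+}$, and if $\mathcal{A}$ is already unital then $\mathcal{A}^{+} = \mathcal{A}$ and the first two equalities are immediate from Proposition~\ref{proposition_unital_algebra}. In the former case, as long as $\mathcal{A} \neq \{0\}$ no point of $\mathbb{C}^{n}$ can annihilate all of $\mathcal{A}$ --- by translation-invariance one moves a function that does not vanish everywhere so that it is nonzero at any prescribed point --- so in the hull--kernel correspondence for $\mathcal{M}(\mathcal{A}^{+})$ the ideal $\mathcal{A}$ is cut out by a closed subset of $\mathcal{M}(\mathcal{A}^{+}) \setminus \mathbb{C}^{n}$, which is exactly the scenario handled in the proof of Proposition~\ref{theorem_ideals}. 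That proposition, applied with ambient algebra $\mathcal{A}^{+}$ and ideal $\mathcal{A}$, shows that $\mathcal{T}_{lin}^{p,t}(\mathcal{A})$ is a closed, two-sided ideal in $\mathcal{T}_{lin}^{p,t}(\mathcal{A}^{+})$ for every $t > 0$. In particular $\mathcal{T}_{lin}^{p,t}(\mathcal{A})$ is a norm-closed subalgebra of $\mathcal{L}(F_{t}^{p})$ containing every $T_{f}^{t}$ with $f \in \mathcal{A}$, so it must equal $\mathcal{T}^{p,t}(\mathcal{A})$; and for $p = 2$ the generating set $\{T_{f}^{t} : f \in \mathcal{A}\}$ is self-adjoint because $\mathcal{A}$ is conjugation-closed and $(T_{f}^{t})^{\ast} = T_{\overline{f}}^{t}$, whence $\mathcal{T}_{lin}^{2,t}(\mathcal{A})$ is a $C^{\ast}$ algebra and coincides with $\mathcal{T}_{\ast}^{2,t}(\mathcal{A})$.

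For the last assertion, let $\mathcal{I} \subseteq \mathcal{A}$ be a closed, $\alpha$-invariant ideal. Then $\mathcal{I}$ is also a closed, $\alpha$-invariant ideal of $\mathcal{A}^{+}$, since $(f + \lambda 1)g = fg + \lambda g \in \mathcal{I}$ for $f + \lambda 1 \in \mathcal{A}^{+}$ and $g \in \mathcal{I}$. Applying Proposition~\ref{theorem_ideals} to $\mathcal{A}^{+}$ and $\mathcal{I}$ shows that $\mathcal{T}_{lin}^{p,t}(\mathcal{I})$ is a closed, two-sided ideal in $\mathcal{T}_{lin}^{p,t}(\mathcal{A}^{+})$, and since $\mathcal{T}_{lin}^{p,t}(\mathcal{I}) \subseteq \mathcal{T}_{lin}^{p,t}(\mathcal{A}) \subseteq \mathcal{T}_{lin}^{p,t}(\mathcal{A}^{+})$ it is then a closed, two-sided ideal in the intermediate algebra $\mathcal{T}_{lin}^{p,t}(\mathcal{A}) = \mathcal{T}^{p,t}(\mathcal{A})$ for all $1 < p < \infty$ and $t > 0$. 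The proof is essentially bookkeeping once the unitization is in place; the only subtlety, and the step I would be most careful about, is that Proposition~\ref{theorem_ideals} is stated for a unital ambient algebra --- which forces one to work with $\mathcal{A}^{+}$ rather than $\mathcal{A}$ directly --- together with the verification that $\mathcal{A}$, and more generally $\mathcal{I}$, corresponds to a subset of the boundary $\mathcal{M}(\mathcal{A}^{+}) \setminus \mathbb{C}^{n}$.
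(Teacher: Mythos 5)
Your proof is correct and follows essentially the same route as the paper: unitize via $\mathcal{A}^{+} := \mathcal{A} + \mathbb{C}\cdot 1$, apply Proposition~\ref{proposition_unital_algebra} to $\mathcal{A}^{+}$, and then invoke Proposition~\ref{theorem_ideals} to see that $\mathcal{T}_{lin}^{p,t}(\mathcal{A})$ (and later $\mathcal{T}_{lin}^{p,t}(\mathcal{I})$) is an ideal in $\mathcal{T}_{lin}^{p,t}(\mathcal{A}^{+})$, hence a closed subalgebra. The extra bookkeeping you supply --- that translation-invariance of $\mathcal{A}$ forces its hull to avoid $\mathbb{C}^{n}$, so Proposition~\ref{theorem_ideals} indeed applies --- is a reasonable precaution, but the paper already handles this inside the proof of that proposition (a proper $\alpha$-invariant ideal is automatically cut out by a boundary set).
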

\begin{proof}
Assume $\mathcal A$ is not unital. $\mathcal A \oplus \mathbb C 1 \subset \BUC(\mathbb C^n)$ is an $\alpha$- and $U$-invariant unital $C^\ast$ subalgebra of $\BUC(\mathbb C^n)$ in which $\mathcal A$ is an $\alpha$-invariant ideal. Therefore, $\mathcal T_{lin}^{2,t}(\mathcal A)$ is an ideal in $\mathcal T_{lin}^{2,t}(\mathcal A \oplus \mathbb C 1)$ and in particular a $C^\ast$ algebra and needs to agree with $\mathcal T_{\ast}^{2,t}(\mathcal A)$. Further, $\mathcal I$ is also an ideal in $\mathcal A \oplus \mathbb C1$, therefore $\mathcal T_{lin}^{2,t}(\mathcal I)$ is an ideal in $\mathcal T_{lin}^{2,t}(\mathcal A \oplus \mathbb C1)$ and also in $\mathcal T_{lin}^{2,t}(\mathcal A)$.
\end{proof}

Combining all results, we have obtain a proof of Theorem \ref{main_theorem}.

\section{Discussion}
First, let us provide a class of examples which give proper linearly generated subalgebras of $\mathcal T^{p,t}$ according to Theorem \ref{main_theorem}:

Let $\mathcal O \subset \mathbb C^n$ be a closed, non-empty subset. We set
\[ \mathcal A_{\mathcal O} := \{ f \in \BUC(\mathbb C^n); ~ \alpha_w(f) = f \text{ for all } w \in \mathcal O\}. \]
We might assume without loss of generality that $\mathcal O$ is an additive subgroup of $\mathbb C^n$. 
One easily sees that $\mathcal A_{\mathcal O}$ is a unital $\alpha$- and $U$-invariant $C^\ast$ algebra. Hence, the results from Theorem \ref{main_theorem} apply:
\[ \mathcal T_{lin}^{p,t}(\mathcal A_{\mathcal O}) = \mathcal T^{p,t}(\mathcal A_{\mathcal O}). \]
It is now not hard to prove that
\begin{align*}
\mathcal T_{lin}^{p,t}&(\mathcal A_{\mathcal O}) \\
&= \mathcal T^{p,t}(\{ f \in L^\infty(\mathbb C^n); ~ \alpha_w(f) = f \text{ for all } w \in \mathcal O\})\\
&= \{ A \in \mathcal T^{p,t}; ~ \alpha_w(A) = A \text{ for all } w \in \mathcal O\}.
\end{align*}
\begin{rem}
If we let $\mathcal O = \mathcal L$ be a Lagrangian subspace of $\mathbb C^n$ (with the special case $\mathcal L = \{ 0 \} \times i \mathbb R^n \subset \mathbb C^n$) we obtain the class of Lagrangian-invariant (respectively horizontal) Toeplitz operators. These have been studied in \cite{Esmeral_Vasilevski2016}. Among other results, it has been proven there (using different methods) that
\begin{align*}
\mathcal T_{lin}^{2,1}&(\{f  \in L^\infty(\mathbb C^n); ~ \alpha_w(f) = f, ~ w \in \mathcal L\})\\
 &= \mathcal T_{\ast}^{2,1}(\{f  \in L^\infty(\mathbb C^n); ~ \alpha_w(f) = f, ~ w \in \mathcal L\}),
\end{align*}
which turns out to be a special case of the above proposition.
\end{rem}
Let $\mathcal O \subset \mathbb C^n$ be as above and fix $z \in (\mathcal O)^\perp \setminus \{ 0\}$. Let us denote by
\[ \mathcal I_{\mathcal O, z} := \{ f \in \mathcal A_{\mathcal O}; ~ \alpha_w f(\lambda z) \to 0,~ \lambda \in \mathbb R,~ |\lambda| \to \infty \text{ for all } w \in \operatorname{span} \mathcal O \} \]
the set of all functions in $\mathcal A_{\mathcal O}$ which vanish in the direction $z$ orthogonal to $\mathcal O$ at infinity. $\mathcal I_{\mathcal O, z}$ is an $\alpha$-invariant ideal of $\mathcal A_{\mathcal O}$ to which the results from Theorem \ref{main_theorem} apply, i.e. $\mathcal T_{lin}^{p,t}(\mathcal I_{\mathcal O, z})$ is a two-sided ideal in $\mathcal T_{lin}^{p,t}(\mathcal A_{\mathcal O})$ for all $t > 0$.
\newline \ \\
We end with the following comments:

\begin{enumerate}[1)]
\item In \cite{Xia}, J. Xia also proved equality of the Toeplitz algebra $\mathcal T^{2,1}$ with the Banach algebra generated by weakly localized operators. Based on our Theorem \ref{Toeplitz_algebra_linearly_generated}, R. Hagger recently proved the same equality for every $t > 0$ and $1 < p < \infty$ \cite{Hagger2020}.
\item In this work, we restricted ourselves to the case $1 < p < \infty$. Nevertheless, one can try to establish the Correspondence Theory also over the Fock spaces $F_t^1, f_t^\infty$ and $F_t^\infty$ (cf. \cite{Zhu} for definitions and properties). If one naively tries to imitate the approach used here over $f_t^\infty$, one encounters a problem when defining the convolution between $f \in L^\infty(\mathbb C^n)$ and $A \in \mathcal N(f_t^\infty)$: Since $f_t^\infty$ is not reflexive, $A \ast f$ is naturally contained in $\mathcal L((f_t^\infty)')$ and not in $\mathcal L(f_t^\infty)$! This problem can be fixed, since the Correspondence Theory only needs $f \ast \mathcal R_t = T_f^t \in \mathcal L(f_t^\infty)$, which is indeed true. With some additional efforts, a version of the Correspondence Theory and its applications can then be established over $f_t^\infty$ and, using the dualities $(f_t^\infty)' \cong F_t^1$ and $(F_t^1)' \cong F_t^\infty$, also over those Fock spaces. Details of this will appear elsewhere.
\item Theorem \ref{main_theorem} needs both the $U$-invariance and the self-adjointness of $\mathcal D_0$. As we have seen, this is only important for one direction of the proof, while the other direction remains true without these assumptions. It is natural to ask whether these assumptions are in fact necessary. As for now, this is an open problem to the author. 
\item One might also wonder if there are $\alpha$-invariant and closed $\mathcal D_0 \subseteq \buc(\mathbb C^n)$ such that $\mathcal T_{lin}^{p,t}(\mathcal D_0) = \mathcal T^{p,t}(\mathcal D_0)$ holds just for some $t > 0$, but not all such $t$. No such example is known to the author.
\end{enumerate}

\section{Acknowledgements}
The author is grateful for the support of his supervisor, Prof. Wolfram Bauer, during the preparation of this work. He also appreciates the anonymous referee's helpful comments on this paper.

\bibliographystyle{amsplain}
\bibliography{References}

\bigskip

\noindent
Robert Fulsche\\
\href{fulsche@math.uni-hannover.de}{\Letter fulsche@math.uni-hannover.de}
\\

\noindent
Institut f\"{u}r Analysis\\
Leibniz Universit\"at Hannover\\
Welfengarten 1\\
30167 Hannover\\
GERMANY

\end{document}